\shorttitle}
\@nx\MakeUppercase{\the\toks@}}
\patchcmd\@settitle{\uppercasenonmath\@title}{\Large}{}{}
\authors}
\newtheorem{theorem}{Theorem}[section]
\newtheorem{definition}{Definition}[section]
\newtheorem{corollary}{Corollary}[section]
\newtheorem{lemma}{Lemma}[section]
\newtheorem{remark}{Remark}[section]
\newtheorem{example}{Example}[section]
\numberwithin{equation}{section}
\newcommand{\vertiii}[1]{{\left\vert\kern-0.25ex\left\vert\kern-0.25ex\left\vert #1 
		\right\vert\kern-0.25ex\right\vert\kern-0.25ex\right\vert}}
\newcommand\norm[1]{\left\lVert#1\right\rVert}
\begin{document}
		\title[On inequalities involving the spherical operator transforms]{On inequalities involving the spherical operator transforms}
		\keywords{Joint operator norm, Schatten $p$-norm, Schatten $p$-numerical radius, Schatten hypo-$p$-norm, spherical Heinz transform,  spherical Aluthge
			transform.}
		
		\subjclass[2020]{ 47A13, 47A30, 47B10, 47B65}

		\author[F. Kittaneh,  S. Sahoo and H. Stankovi\'c]{Fuad Kittaneh, Satyajit Sahoo and Hranislav Stankovi\'c}

		\address{(Kittaneh) Department of Mathematics, The University of Jordan, Amman, Jordan and Department of Mathematics, Korea University, Seoul 02841, South Korea}
		\email{\url{fkitt@ju.edu.jo}}
		
		\address{(Sahoo) Department of Mathematics, School of Basic Sciences, Indian Institute of Technology Bhubaneswar, Bhubaneswar, Odisha 752050, India}
		\email{\url{ssahoomath@gmail.com}}
		
		\address{(Stankovi\'c) Faculty of Electronic Engineering, University of Ni\v s, Aleksandra Medvedeva 14, Ni\v s, Serbia
		}
		\email{\url{hranislav.stankovic@elfak.ni.ac.rs}}
		
		
		\date{\today}
		
		\maketitle

		\begin{abstract}
			This paper explores refinements of some operator norm inequalities through the generalized spherical Aluthge transform and the spherical Heinz transform. We introduce the spherical Schatten $p$-norm for operator tuples and establish several related inequalities. Additionally, equality conditions for some of these inequalities are also presented.  Furthermore, we define the (joint) Schatten $p$-numerical radius and the Schatten hypo-$p$-norm for operator tuples, deriving some fundamental inequalities in this setting.
		\end{abstract}
		
		\bigskip 
		\section{Introduction and preliminary results}

		Let $(\mathcal{H},\langle \cdot, \cdot \rangle)$ be a separable complex Hilbert space, and denote by $\mathfrak{B}(\mathcal{H})$ the space of all bounded linear operators on $\mathcal{H}$. The null space and the range of an operator $T \in \mathfrak{B}(\mathcal{H})$ are denoted by $\mathcal{N}(T)$ and $\mathcal{R}(T)$, respectively. The adjoint of $T$ is represented by $T^*$. The modulus of $T$ is given by $|T| = (T^*T)^{1/2}$, while its real and imaginary parts are defined as $\Re (T) = \frac{T + T^*}{2}$ and $\Im(T) = \frac{T - T^*}{2i}$, respectively.		
		An operator $T$ is called positive, denoted $T \geq 0$, if $\langle Tx, x \rangle \geq 0$ for all $x \in \mathcal{H}$. If $-T \geq 0$, we write $T \leq 0$. An operator $T$ is self-adjoint (or Hermitian) if $T = T^*$. The set of positive operators forms a convex cone in $\mathfrak{B}(\mathcal{H})$, inducing the partial order $\geq$ on the set of self-adjoint operators: for Hermitian operators $A$ and $B$, we write $A \geq B$ if and only if $A - B \geq 0$. This order is known as the L\"{o}wner order. It is evident that $|T| \geq 0$, and that $\mathrm{Re\,} T$ and $\mathrm{Im\,} T$ are self-adjoint for any operator $T \in \mathfrak{B}(\mathcal{H})$. 		
		An operator $T$ is normal if it satisfies $T^*T = TT^*$, and it is unitary if $T^*T = TT^* = I$. 
		Due to  importance of normal operators in operator theory, as well as in quantum mechanics, many generalizations of this class have appeared over the decades.  An operator $T$ is said to be quasinormal if $T$ commutes with $T^*T$, i.e., $TT^*T=T^*T^2$, and hyponormal if $TT^*\leq T^*T$. Clearly, both of these classes generalize the class of normal operators, and we have that
		\begin{align*}
			\text{normal}&\,\Rightarrow\,\text{quasinormal}\,\Rightarrow\,\text{hyponormal}.
		\end{align*}
		
		For other generalizations of normal operators, see, for example, \cite{Furuta01}.
		
		\medskip 
		
		Closely related to hyponormal operators (and especially \( p \)-hyponormal operators) are various operator transforms. If $T\in\mathfrak{B}(\mathcal{H}) $, then $T$ admits a polar decomposition $T=U|T|$, where $U$ is a partial isometry (so that $U^*U$ and $UU^*$ are projections).
        The Aluthge transform of an operator \( T \in \mathfrak{B}(\mathcal{H}) \) is given by $\widetilde{T} = |T|^{1/2} U |T|^{1/2}.$ The Duggal transform of \( T \) is defined as $T^D = |T|U,$ 	 
		while the mean transform is given by  $\widehat{T} = \frac{1}{2}(U|T| + |T|U) = \frac{1}{2}(T + T^D).$ 
		For \( t \in [0,1] \), the generalized Aluthge transform of \( T \) is defined as $\widetilde{T}(t) = |T|^t U |T|^{1-t}.$ 			Clearly,   
		$\widetilde{T}(0) = T$, $\widetilde{T}(1) = T^D,$  \text{and} $\widehat{T} = \frac{1}{2}(\widetilde{T}(0) + \widetilde{T}(1)).$ 		
		Recently, the authors of \cite{BenhidaChoKo20} introduced the generalized mean transform of \( T \), given by  
		\[
		\widehat{T}(t) = \frac{1}{2}(\widetilde{T}(t) + \widetilde{T}(1-t)),
		\]  
		while the $\lambda$-mean transform of $T$ is introduced in \cite{Zamani_JMAA_2021}, and it is defined by 
		$$M_\lambda(T)=\lambda T+(1-\lambda)T^D$$
		for $\lambda\in [0, 1]$. In particular, $M_0(T)=T^D$ and $M_{\frac{1}{2}}(T)=\widehat{T}$ is the mean transform of $T$. 				For further details on the Aluthge and Duggal transforms, see \cite{Aluthge90, ChoJungLee05, JungKoPearcy00}. In recent years, the mean transform and its generalizations have also received significant attention (see \cite{ACFS24a, ACFS24b, ChabbabiMbekhta17, ChabbabiOsterman22, LeeLeeYoon14}).

		\medskip

		Next, let $\mathfrak{K}(\mathcal{H})$ denote the ideal of compact operators on the Hilbert space $\mathcal{H}$.  For $T\in\mathfrak{K}(\mathcal{H})$ the singular values of $T$, denoted by $s_1(T), s_2(T), \dots$, correspond to the eigenvalues of the positive operator $|T|$ and are arranged in decreasing order, accounting for multiplicity. Furthermore, let 
		\begin{equation*}
			\mathcal{S}:=\left\{T\in\mathfrak{K}(\mathcal{H}):\, \sum_{j=1}^\infty s_j(T)<\infty
			\right\}.
		\end{equation*}
		Operators in $\mathcal{S}$ are called the trace class operators.
		The trace functional, denoted by $\text{tr}(\cdot)$, is defined on $\mathcal{S}$ as
		\begin{equation}\label{traza}
			\text{tr}(T) = \sum_{j=1}^{\infty} \langle Te_j, e_j\rangle,\quad T\in\mathcal{S},
		\end{equation}
		where $\{e_j\}_{j=1}^{\infty}$ forms an orthonormal basis for the Hilbert space $\mathcal{H}$. It is worth noting that this definition coincides with the standard trace definition when $\mathcal{H}$ is finite-dimensional. The series in \eqref{traza} converges absolutely, and its value remains unchanged regardless of the choice of basis.
		
		Additionally, let us clarify the definition of the Schatten $p$-class with $p \geq 1$. An operator $T$ belongs to the Schatten $p$-class, denoted as $\mathfrak{C}_p(\mathcal{H})$, if the sum of the $p$-th powers of its singular values is finite. More precisely, $T \in \mathfrak{C}_p(\mathcal{H})$ if 
		
		\[
		\text{tr}(|T|^p) = \sum_{j=1}^{\infty} s_j(T)^p < \infty.
		\]
		The Schatten $p$-norm of $T \in \mathfrak{C}_p(\mathcal{H})$ is given by
		\begin{equation*}
			\|T\|_p := \left[\text{tr}(|T|^p)\right]^\frac{1}{p}.
		\end{equation*}
		When $p=2$, the ideal $\mathfrak{C}_2(\mathcal{H})$ is referred to as the Hilbert--Schmidt class. In this case, $\mathfrak{C}_2(\mathcal{H})$ forms a Hilbert space with the inner product $\langle T, S\rangle_2 = \text{tr}(TS^*)$. Also, when $p=1$, we obviously have that $\mathfrak{C}_1(\mathcal{H})=\mathcal{S}$. 
		
		Recall that for any unitarily invariant norm $|||\cdot|||$ on $\mathcal{H}$, the invariance property $|||UTV||| = |||T|||$ holds for any pair of unitary operators $U$ and $V$, provided that $T \in \mathcal{J}_{|||\cdot|||}$, where $\mathcal{J}_{|||\cdot|||}$ denotes the norm ideal associated with $|||\cdot|||$. The standard operator norm and the $p$-Schatten norms are examples of unitarily invariant, or symmetric, norms. It is well known that $|||T||| = |||\,|T|\,||| = |||T^*|||$ for any $T \in \mathcal{J}_{|||\cdot|||}$. Furthermore, all these ideals are contained within the ideal of compact operators. For a comprehensive exploration of the general theory of unitarily invariant norms, we refer to \cite{GK}.

		\medskip

		Furthermore, let us introduce some notations from the multivariable operator theory, as well as briefly recall some classes of operator $d$-tuples, which will be of interest in the sequel. For $\mathbf{T}=(T_1,\ldots,T_d)\in\mathfrak{B}(\mathcal{H})^d$,
		by $\mathbf{T^*}$ we denote the operator $d$-tuple $\mathbf{T^*}=(T_1^*,\ldots,T_d^*)\in\mathfrak{B}(\mathcal{H})^d$. 		 If $\mathbf{T}=(T_1,\ldots, T_m)\in\mathfrak{B}(\mathcal{H})^m$ and $\mathbf{S}=(S_1,\ldots, S_n)\in\mathfrak{B}(\mathcal{H})^n$, the product of $\mathbf{T}$ and $\mathbf{S}$ is given by
		\begin{equation*}\label{eq:product_def}
			\mathbf{T}\circ\mathbf{S}:=(T_1S_1,\ldots,T_1S_n,\ldots, T_mS_1,\ldots, T_mS_n).
		\end{equation*}
		Related to this definition, we introduce the following notation:
		\begin{equation*}
			\mathbf{T}^1=\mathbf{T}\quad \text{ and }\quad \mathbf{T}^{n+1}=\mathbf{T}\circ\mathbf{T}^n,
		\end{equation*}
		where $n\in\mathbb{N}$. 
		
		For $S,T\in\mathfrak{B}(\mathcal{H})$, let $[S,T]=ST-TS$ be the commutator of $S$ and $T$. We say that an $d$-tuple $\mathbf{T}=(T_1,\ldots,T_d)\in\mathfrak{B}(\mathcal{H})^d$ of operators on $\mathcal{H}$ is jointly  hyponormal  if the operator matrix
		\begin{equation*}
			[\mathbf{T^*},\mathbf{T}]:=\begin{bmatrix}
				[T_1^*,T_1]&[T_2^*,T_1]&\cdots&[T_d^*,T_1]\\
				[T_1^*,T_2]&[T_2^*,T_2]&\cdots&[T_d^*,T_2]\\
				\vdots&\vdots&\ddots&\vdots\\
				[T_1^*,T_d]&[T_2^*,T_d]&\cdots&[T_d^*,T_d]
			\end{bmatrix}
		\end{equation*}
		is positive on the direct sum of $n$ copies of $\mathcal{H}$ (cf. \cite{Athavale88, CurtoMuhlyXia88}). An operator $d$-tuple $\mathbf{T}=(T_1,\ldots,T_d)\in\mathbb{B}(\mathcal{H})^d$ is said to be  commuting  (or a  $d$-tuple of commuting operators) if $T_iT_j=T_jT_i$ for all $i,j\in\{1,\ldots,d\}$. A $d$-tuple $\mathbf{T}$ is said to be  normal  if $\mathbf{T}$ is commuting  and each $T_k$ is normal, $k\in\{1,\ldots, d\}$. 
		The notion of quasinormality has  several multivariable analogues. We shall restrict ourselves to the spherically quasinormal operator tuples. We say that a commuting operator $d$-tuple is spherically quasinormal if each $T_i$ commutes with $\sum_{j=1}^{n}T_j^*T_j$, $i\in\{1,\ldots,d\}$ (see \cite{Gleason06}). As shown in \cite{Athavale88}, \cite{AthavalePodder15} and \cite{Gleason06}, we have
		
		\begin{align*}
			\text{normal}\,\Rightarrow\ \text{spherically quasinormal}\,\Rightarrow\, \text{jointly hyponormal}.
		\end{align*}
		
		For more details on these classes and the relations between them, see \cite{CurtoLeeYoon05, CurtoLeeYoon20, CvetkovicIlicStankovic24, Stankovic23_factors, Stankovic23_roots}.	
		The norm of an operator $d$-tuple appears in several important forms. 
		The  spherical operator norm (or joint operator norm) of $\mathbf T=(T_1,\ldots,T_d) \in \mathfrak B(\mathcal H)^{d}$ is given by
		$$\|\mathbf{T}\|:=\displaystyle\sup\left\{\left(\displaystyle\sum_{k=1}^d\|T_kx\|^2\right)^{\frac{1}{2}}:\;x\in \mathcal{H},\;\|x\|=1\right\}.$$
		The following simple observations provide useful characterizations of the spherical operator norm.
		\begin{lemma}\cite[Lemma 2.1]{FekiYamazaki21}\label{lem:norm_P}
			Let $\mathbf{T}=(T_1,\ldots, T_d)\in\mathfrak{B}(\mathcal{H})^d$. Then
			\begin{equation*}
				\norm{\mathbf{T}}=\norm{\sum_{k=1}^dT_k^*T_k}^{\frac{1}{2}}.
			\end{equation*}
		\end{lemma}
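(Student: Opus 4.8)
The plan is to compute the quantity $\sum_{k=1}^d \norm{T_k x}^2$ directly in terms of the positive operator $\sum_{k=1}^d T_k^* T_k$, and then to recognize the supremum over unit vectors as an operator norm. First I would fix a unit vector $x\in\mathcal{H}$ and rewrite each summand using the defining property of the adjoint: $\norm{T_k x}^2 = \skal{T_k x}{T_k x} = \skal{T_k^* T_k x}{x}$. Summing over $k$ and using linearity of the inner product in the first slot, this gives
\begin{equation*}
	\sum_{k=1}^d \norm{T_k x}^2 = \sum_{k=1}^d \skal{T_k^* T_k x}{x} = \skal{\Big(\sum_{k=1}^d T_k^* T_k\Big) x}{x}.
\end{equation*}
Writing $P = \sum_{k=1}^d T_k^* T_k$, this identity shows $\sum_{k=1}^d\norm{T_k x}^2 = \skal{Px}{x}$ for every $x$, so that $\norm{\mathbf{T}} = \sup\{\skal{Px}{x}^{1/2} : \norm{x}=1\}$.

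Next I would observe that $P$ is a positive operator, being a finite sum of the positive operators $T_k^* T_k$. The key step is then to invoke the standard fact that for any positive (hence self-adjoint) operator $P$ one has $\norm{P} = \sup\{\skal{Px}{x} : \norm{x}=1\}$; this is the variational characterization of the operator norm of a self-adjoint operator. Taking square roots and pulling the square root outside the supremum (which is legitimate since $t\mapsto t^{1/2}$ is increasing and continuous on $[0,\infty)$) yields
\begin{equation*}
	\norm{\mathbf{T}} = \Big(\sup_{\norm{x}=1}\skal{Px}{x}\Big)^{1/2} = \norm{P}^{1/2} = \norm{\sum_{k=1}^d T_k^* T_k}^{1/2},
\end{equation*}
which is exactly the claimed identity.

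I do not anticipate any genuine obstacle here: the result is essentially a one-line reduction once the pointwise identity $\sum_k\norm{T_k x}^2 = \skal{Px}{x}$ is in hand. The only point requiring a modicum of care is the appeal to the variational formula $\norm{P}=\sup_{\norm{x}=1}\skal{Px}{x}$, which is valid precisely because $P$ is self-adjoint (indeed positive); for a non-self-adjoint operator this would fail and one would only get a numerical-radius lower bound. Since $P$ is manifestly positive as a sum of terms $T_k^*T_k\ge 0$, this hypothesis is automatically satisfied, and the interchange of the square root with the supremum is justified by monotonicity of the square-root function on the nonnegative reals.
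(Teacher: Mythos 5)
Your proof is correct: the pointwise identity $\sum_{k=1}^d\norm{T_kx}^2=\skal{Px}{x}$ with $P=\sum_{k=1}^dT_k^*T_k$, followed by the variational formula $\norm{P}=\sup_{\norm{x}=1}\skal{Px}{x}$ for positive operators, is exactly the standard argument. The paper itself gives no proof of this lemma (it is quoted from the cited reference \cite{FekiYamazaki21}), and your argument is the natural one that establishes it, with the self-adjointness hypothesis for the variational formula correctly identified and justified.
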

		
		\begin{lemma}\cite[Lemma 2.1]{AltwaijryDragomirFeki23}\label{lem:matrix_norm}
			Let $\mathbf{T}=(T_1,\ldots, T_d)\in\mathfrak{B}(\mathcal{H})^d$. Then
			\begin{equation*}
				\norm{\mathbf{T}}=\norm{\mathbb{T}},
			\end{equation*}
			where $\mathbb{T}$ denotes the following operator matrix on $\mathcal{H}_d=\oplus_{i=1}^d\mathcal{H}$:
			\begin{equation}\label{eq:op_matrix_def}
				\mathbb{T}=\begin{bmatrix}
					T_1&0&\cdots&0\\
					\vdots&\vdots&\phantom{asd}&\vdots\\
					T_d&0&\cdots&0
				\end{bmatrix}.
			\end{equation}
		\end{lemma}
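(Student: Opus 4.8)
The plan is to work directly from the definition of the operator norm of $\mathbb{T}$ acting on $\mathcal{H}_d=\bigoplus_{i=1}^d\mathcal{H}$. First I would compute the action of $\mathbb{T}$ on a generic vector $\mathbf{x}=(x_1,\ldots,x_d)^{\top}\in\mathcal{H}_d$. Because every column of $\mathbb{T}$ except the first is zero, only the first component $x_1$ survives, and one obtains
\[
\mathbb{T}\mathbf{x}=(T_1x_1,T_2x_1,\ldots,T_dx_1)^{\top},
\qquad
\norm{\mathbb{T}\mathbf{x}}^2=\sum_{k=1}^d\norm{T_kx_1}^2 .
\]

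Next I would translate the supremum defining $\norm{\mathbb{T}}$ into this expression. Since $\norm{\mathbf{x}}^2=\sum_{i=1}^d\norm{x_i}^2$ while $\norm{\mathbb{T}\mathbf{x}}$ depends only on $x_1$, the coordinates $x_2,\ldots,x_d$ contribute nothing to the numerator but only tighten the norm constraint. Hence among unit vectors the supremum is attained on those with $x_2=\cdots=x_d=0$ and $\norm{x_1}=1$, which reduces $\norm{\mathbb{T}}=\sup_{\norm{\mathbf{x}}=1}\norm{\mathbb{T}\mathbf{x}}$ exactly to $\sup_{\norm{x_1}=1}\bigl(\sum_{k=1}^d\norm{T_kx_1}^2\bigr)^{1/2}$, which is precisely the definition of $\norm{\mathbf{T}}$.

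The only point requiring care — and what I regard as the main (albeit minor) obstacle — is making this supremum reduction rigorous in both directions. The inequality $\norm{\mathbb{T}}\ge\norm{\mathbf{T}}$ follows immediately by testing $\mathbb{T}$ on vectors of the form $(x_1,0,\ldots,0)$, while $\norm{\mathbb{T}}\le\norm{\mathbf{T}}$ requires observing that for any unit vector $\mathbf{x}$ one has $\norm{\mathbb{T}\mathbf{x}}^2=\sum_{k=1}^d\norm{T_kx_1}^2\le\norm{\mathbf{T}}^2\norm{x_1}^2\le\norm{\mathbf{T}}^2$, after homogenizing in $x_1$.

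Alternatively — and perhaps more cleanly — I could bypass the supremum altogether by invoking Lemma \ref{lem:norm_P}. A direct block computation gives $\mathbb{T}^*\mathbb{T}=\bigl(\sum_{k=1}^dT_k^*T_k\bigr)\oplus 0\oplus\cdots\oplus 0$, so that $\norm{\mathbb{T}}^2=\norm{\mathbb{T}^*\mathbb{T}}=\norm{\sum_{k=1}^dT_k^*T_k}$, since the norm of a block-diagonal operator is the maximum of the norms of its blocks. The claim is then immediate from Lemma \ref{lem:norm_P}. This second route reduces the whole statement to a one-line consistency check with the preceding lemma, and is the version I would ultimately write up.
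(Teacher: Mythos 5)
Both of your arguments are correct. Note, however, that there is no internal proof to compare against: the paper quotes this lemma from \cite{AltwaijryDragomirFeki23} (Lemma 2.1 there) and does not reproduce its proof. Your first, direct route is complete as sketched: $\mathbb{T}\mathbf{x}=(T_1x_1,\ldots,T_dx_1)^{\top}$, the lower bound $\norm{\mathbb{T}}\ge\norm{\mathbf{T}}$ by testing on vectors $(x_1,0,\ldots,0)$ with $\norm{x_1}=1$, and the upper bound via the homogeneity estimate $\sum_{k=1}^d\norm{T_kx_1}^2\le\norm{\mathbf{T}}^2\norm{x_1}^2\le\norm{\mathbf{T}}^2$ are exactly the points that need checking. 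Your second route is the cleaner of the two and is the natural companion to the preceding lemma: from the block computation $\mathbb{T}^*\mathbb{T}=\bigl(\sum_{k=1}^dT_k^*T_k\bigr)\oplus 0\oplus\cdots\oplus 0$ and the C*-identity $\norm{\mathbb{T}}^2=\norm{\mathbb{T}^*\mathbb{T}}$, one gets $\norm{\mathbb{T}}=\norm{\sum_{k=1}^dT_k^*T_k}^{1/2}$, which equals $\norm{\mathbf{T}}$ by Lemma \ref{lem:norm_P}. Either write-up would serve as a self-contained substitute for the external citation; the second also makes transparent why the two lemmas are really one and the same fact.
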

		The hypo-norm introduced in \cite{dr2019} (as a special case of more general hypo-$p$-norms) is defined by
		\begin{equation*}
			\|\mathbf{T}\|_h :=  \sup_{(\lambda_1,\ldots,\lambda_d)\in {\overline{\mathbb{B}}_d}}\|\lambda_1T_1+\ldots+\lambda_dT_d\|,
		\end{equation*}
		where ${\mathbb{B}}_d$ denotes the open unit ball of $\mathbb{C}^d$ with respect to the Euclidean norm $\|\cdot\|_2,$ i.e.,
		\begin{equation}\label{eq:unit_ball}
			\mathbb{B}_d:=\left\{\bm{\lambda}=(\lambda_1,\ldots,\lambda_d)\in \mathbb{C}^d\,:\; \norm{\bm{\lambda}}_2^2=\sum_{k=1}^d|\lambda_k|^2\leq 1 \right\}.
		\end{equation}
		As shown recently, it turns out that $\|\mathbf{T}\|$ and $\|\mathbf{T}\|_h$ are always equivalent on $\mathfrak B(\mathcal H)^{d}$ (see \cite[Theorem 1.18]{Popescu09} and \cite[Proposition 2.1]{Feki20}):
		\begin{equation*}
			\frac{1}{\sqrt{d}}\|\mathbf T\| \leq \|\mathbf T\|_h \leq \|\mathbf T\|, \quad \mathbf T \in \mathfrak B(\mathcal H)^{d}.
		\end{equation*}
		There is also a notion of the Euclidean operator norm, which first appeared in \cite{Patel80} and it is given by
		\begin{equation*}
			\norm{\mathbf{T}}_e:=\left(\sum_{k=1}^{n}\norm{T_k}^2\right)^{1/2}.
		\end{equation*}
		However, when we speak about \emph{the}  norm of an operator $d$-tuple, we usually think of a spherical operator norm.  It is worth mentioning that while $\|\mathbf{T}\|_h=\|\mathbf{T}^*\|_h$ and $\|\mathbf{T}\|_e=\|\mathbf{T}^*\|_e$ for any $\mathbf{T}\in \mathfrak B(\mathcal H)^{d}$, this is not the case for $\|\cdot\|$. For other related concepts, we refer the reader to \cite{AltwaijryFekiStankovic25, Stankovic25}.

		For $\mathbf{T}=(T_1,\ldots,T_d)\in \mathfrak B(\mathcal H)^{d}$, we define the joint $($or spherical$)$ numerical radius of $\mathbf{T}$ as
		\begin{equation*}
			\omega(\mathbf{T})
			=\displaystyle\sup \left\{\Big(\displaystyle \sum_{k=1}^d|\langle T_kx, x\rangle|^2\Big)^{\frac{1}{2}}:\;x\in \mathcal{H},\;\|x\|=1\right\}.
		\end{equation*}
		For more details, see \cite{ChoTakaguchi81}. It was shown in \cite[Proof of Theorem 1.19]{Popescu09} that $\omega(\mathbf T)$ coincides with the  hypo-numerical radius $\omega_h(\mathbf T)$, which is given by
		\begin{equation*}
			\omega_h(\mathbf{T}):= \displaystyle\sup_{(\lambda_1,\ldots,\lambda_d)\in \mathbb{B}_d} \omega(\lambda_1T_1+\ldots+\lambda_dT_d).
		\end{equation*}
		
		For $\mathbf{T}= (T_1,\ldots,T_d)\in \mathfrak B(\mathcal H)^{d}$, it was shown in \cite[Theorem 2.2]{BakloutiFeki18} and \cite[Theorem 2.4]{BakloutiFekiAhmed18} that
		\begin{equation*}\label{eq:ffineq}
		\frac{1}{2\sqrt{d}}\|\mathbf{T}\| \leq\omega(\mathbf{T})\leq \|\mathbf{T}\|_e\leq \|\mathbf{T}\|.
		\end{equation*}

		The spectrum of an operator tuple also has
		many (nonequivalent) definitions. See, for instance, \cite{ArensCalderon55, Taylor70a}.
		An operator $d$-tuple $\mathbf{T}\in\mathfrak{B}(\mathcal{H})^d$ is said to be Taylor invertible if its associated Koszul complex $\mathcal{K}(\mathbf{T},\mathcal{H})$ is exact. For $d=2$, the Koszul complex $\mathcal{K}(\mathbf{T},\mathcal{H})$ associated with $\mathbf{T}=(T_1,T_2)$ on $\mathcal{H}$ is given by:
		\begin{equation*}
			\mathcal{K}(\mathbf{T},\mathcal{H}):\quad 0\longrightarrow\mathcal{H}\stackrel{\mathbf{T}}{\longrightarrow}\mathcal{H}\oplus\mathcal{H}\stackrel{(-T_2 \,\,T_1)}{\longrightarrow}\mathcal{H}\longrightarrow0,
		\end{equation*}
		where $\mathbf{T}=\begin{pmatrix}
			T_1\\T_2
		\end{pmatrix}$.

		Let $d\in\mathbb{N}$.	For $T_1,\ldots, T_d\in\mathfrak{B}(\mathcal{H})$, consider a $d$-tuple $\mathbf{T}=\begin{pmatrix}
			T_1\\\vdots\\T_d
		\end{pmatrix}$ as an operator from $\mathcal{H}$ into $\mathcal{H}\oplus\cdots\oplus\mathcal{H}$, that is,
		\begin{equation}\label{eq:operator_column}
			\mathbf{T}=\begin{pmatrix}
				T_1\\\vdots\\T_d
			\end{pmatrix}:\,\mathcal{H}\to \begin{array}{c}
				\mathcal{H}\\\oplus\\\vdots\\\oplus\\\mathcal{H}
			\end{array}.
		\end{equation}
		We define (canonical) spherical polar decomposition of $\mathbf{T}$ (cf. \cite{CurtoYoon16, CurtoYoon18, KimYoon17}) as
		
		\begin{equation*}
			\mathbf{T}=\begin{pmatrix}
				T_1\\\vdots\\T_d
			\end{pmatrix}=\begin{pmatrix}
				V_1\\\vdots\\V_d
			\end{pmatrix}P=\begin{pmatrix}
				V_1P\\\vdots\\V_dP
			\end{pmatrix}=\mathbf{V}P,
		\end{equation*}
		where $P:=\sqrt{T_1^*T_1+\cdots+T_d^*T_d}$ is a positive operator on $\mathcal{H}$, and \begin{equation*}
			\mathbf{V}=\begin{pmatrix}
				V_1\\\vdots\\V_d
			\end{pmatrix}:\,\mathcal{H}\to \begin{array}{c}
				\mathcal{H}\\\oplus\\\vdots\\\oplus\\\mathcal{H}
			\end{array},
		\end{equation*}
		is a spherical partial isometry from $\mathcal{H}$ into $\mathcal{H}\oplus\ldots\oplus\mathcal{H}$. In other words, $V_1^*V_1+\cdots+V_d^*V_d$ is the (orthogonal) projection onto the initial space of the partial isometry $\mathbf{V}$ which is 
		\begin{align*}
			\mathcal{N}(\mathbf{T})^\perp&=\left(\bigcap_{i=1}^d\mathcal{N}(T_i)\right)^\perp=\mathcal{N}(P)^\perp=\left(\bigcap_{i=1}^d\mathcal{N}(V_i)\right)^\perp.
		\end{align*}

		The following characterization of spherically quasinormal tuples in terms of the polar decomposition will be useful later.
		\begin{theorem}\label{thm:spher_quasi_matrix}\cite[Theorem 1.2]{Stanković_2024}
			Let $\mathbf{T}=(T_1,\ldots, T_d)\in\mathfrak{B}(\mathcal{H})^d$ be a $d$-tuple of commuting operators. The following conditions are equivalent:
			\begin{enumerate}[label=\textit{(\roman*)}]
				\item $\mathbf{T}$ is spherically quasinormal;
				\item $\mathbb{P}\mathbb{V}=\mathbb{V}\mathbb{P}$,
				where \begin{equation*}
					\mathbb{V}=\begin{bmatrix}
						V_1&0&\cdots&0\\
						\vdots&\vdots&\ddots&\vdots\\
						V_d&0&\cdots&0
					\end{bmatrix}
					\quad \text{ and }\quad 
					\mathbb{P}=\begin{bmatrix}
						P & \dots & 0\\
						\vdots  &\ddots &\vdots\\
						0 & \dots &  P
					\end{bmatrix}.
				\end{equation*}
			\end{enumerate}
		\end{theorem}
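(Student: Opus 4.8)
The plan is to first translate the block-matrix identity in (ii) into scalar commutation relations, and then to reduce the statement to the classical-type equivalence between quasinormality and commutativity of the polar factors. Since $\mathbb{P}$ is the block-diagonal operator with $P$ in each diagonal slot while $\mathbb{V}$ has nonzero entries only in its first column, a direct computation of the two block products shows that the $(i,1)$ entry of $\mathbb{P}\mathbb{V}$ is $PV_i$, the $(i,1)$ entry of $\mathbb{V}\mathbb{P}$ is $V_iP$, and all remaining entries of both products vanish. Hence condition (ii) is equivalent to the family of relations $PV_i=V_iP$ for every $i\in\{1,\ldots,d\}$. On the other side, by the definition of the spherical polar decomposition we have $P^2=\sum_{j=1}^d T_j^*T_j$, so by definition $\mathbf{T}$ is spherically quasinormal precisely when $T_iP^2=P^2T_i$ for every $i$. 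Thus the theorem reduces to showing that, with $T_i=V_iP$, one has $T_iP^2=P^2T_i$ for all $i$ if and only if $PV_i=V_iP$ for all $i$.

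For the implication $(ii)\Rightarrow(i)$ I would simply assume $PV_i=V_iP$; this immediately gives $P^2V_i=V_iP^2$, and then $T_iP^2=V_iP^3=P^2V_iP=P^2T_i$, so $\mathbf{T}$ is spherically quasinormal. The substantial direction is $(i)\Rightarrow(ii)$, and this is where I expect the main obstacle. Starting from $T_iP^2=P^2T_i$, the first step is to upgrade commutation with $P^2$ to commutation with $P$ itself: since $P=(P^2)^{1/2}$ can be uniformly approximated by polynomials in $P^2$ on the compact spectrum of $P^2$ (Weierstrass approximation together with the continuous functional calculus), each such polynomial commutes with $T_i$, and passing to the norm limit yields $T_iP=PT_i$. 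Writing $T_i=V_iP$ this reads $V_iP^2=PV_iP$, equivalently $(V_iP-PV_i)P=0$, so the commutator $V_iP-PV_i$ annihilates $\mathcal{R}(P)$ and, being bounded, also $\overline{\mathcal{R}(P)}$.

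To finish, I would treat the complementary subspace $\mathcal{N}(P)$ using the spherical partial isometry structure: since $\bigcap_{i=1}^d\mathcal{N}(V_i)=\mathcal{N}(P)$ (as recorded in the description of the spherical polar decomposition), every $x\in\mathcal{N}(P)$ satisfies $V_ix=0$ as well as $Px=0$, whence $(V_iP-PV_i)x=V_i(Px)-P(V_ix)=0$. Because $P$ is positive we have the orthogonal decomposition $\mathcal{H}=\overline{\mathcal{R}(P)}\oplus\mathcal{N}(P)$, so the commutator $V_iP-PV_i$ vanishes on all of $\mathcal{H}$, giving $PV_i=V_iP$ for each $i$ and hence $\mathbb{P}\mathbb{V}=\mathbb{V}\mathbb{P}$. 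The only delicate points are the functional-calculus passage from $P^2$ to $P$ and the correct use of the kernel condition $\bigcap_i\mathcal{N}(V_i)=\mathcal{N}(P)$ for the partial isometry; both are standard once the block reduction has isolated the scalar relations $PV_i=V_iP$.
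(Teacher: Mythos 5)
Your proof is correct, but there is nothing in the paper to compare it against: the statement is quoted from \cite[Theorem 1.2]{Stanković_2024} and the paper gives no proof of it, so what you have produced is a self-contained argument for a result the paper treats as known. Your route is the natural one, and every step checks out. The block computation correctly reduces condition \textit{(ii)} to the entrywise relations $PV_i=V_iP$, $i\in\{1,\ldots,d\}$; in effect, the rest of your argument re-derives the Curto--Yoon characterization of spherical quasinormality (which this paper invokes elsewhere, in the proof of Theorem \ref{thm:2nd_eq}, as \cite[Lemma 2.1]{CurtoYoon19}), namely that a commuting tuple is spherically quasinormal precisely when the spherical polar factors commute. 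The three ingredients you flag as delicate are all sound: commutation with $P^2$ upgrades to commutation with $P=(P^2)^{1/2}$ by polynomial approximation of $t\mapsto\sqrt{t}$ on $\sigma(P^2)$; the identity $T_iP=PT_i$ rewritten as $(V_iP-PV_i)P=0$ kills the commutator on $\overline{\mathcal{R}(P)}$; and the kernel identity $\bigcap_{i=1}^d\mathcal{N}(V_i)=\mathcal{N}(P)$, which the paper records as part of the canonical spherical polar decomposition, kills it on $\mathcal{N}(P)$. Since $P\geq 0$ gives $\mathcal{H}=\overline{\mathcal{R}(P)}\oplus\mathcal{N}(P)$, the commutator vanishes identically, and the converse direction is, as you say, immediate. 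The only cosmetic remark is that the commutativity hypothesis on $\mathbf{T}$ plays no role in your argument beyond making the term ``spherically quasinormal'' meaningful, since the paper defines that notion only for commuting tuples; this is consistent with the statement and not a gap.
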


		Next, let us recall the multivariable analogues of the earlier mentioned operator transforms. For $\mathbf{T}=\mathbf{V}P=(V_1P,\ldots, V_dP)\in\mathfrak{B}(\mathcal{H})^d$, the following  concepts are introduced:
		\begin{itemize}
			\item spherical Aluthge transform: 
			$$\widetilde{\mathbf{T}}=(\widetilde{\mathbf{T}}_1,\ldots,\widetilde{\mathbf{T}}_d)=(P^{\frac{1}{2}}V_1P^{\frac{1}{2}},\ldots,P^{\frac{1}{2}}V_dP^{\frac{1}{2}});$$
			
			\item spherical Duggal transform: 
			$$\mathbf{T}^D=(\mathbf{T}^D_1,\ldots,\mathbf{T}^D_d)=(PV_1,\ldots,PV_d);$$		
			
			\item  spherical mean transform: 
			$$\widehat{\mathbf{T}}=\dfrac{1}{2}(\mathbf{T}+\mathbf{T}^D);$$
			
			\item generalized spherical Aluthge transform: 
			
			\begin{equation*}
				\widetilde{\mathbf{T}}(t)=(\widetilde{\mathbf{T}}_1(t),\ldots,\widetilde{\mathbf{T}}_d(t))=(P^{t}V_1P^{1-t},\ldots,P^{t}V_dP^{1-t}),
			\end{equation*}
			for $t\in[0,1]$;
			
			\item spherical Heinz transform: 
			\begin{equation*}\label{definition_eq}
				\widehat{\mathbf{T}}(t)=\dfrac{1}{2}(\widetilde{\mathbf{T}}(t)+\widetilde{\mathbf{T}}(1-t)),
			\end{equation*}
			for $t\in[0,1]$.	In other words, $\widehat{\mathbf{T}}(t)=(\widehat{\mathbf{T}}_1(t),\ldots,\widehat{\mathbf{T}}_d(t))$, where
			\begin{equation*}
				\widehat{\mathbf{T}}_i(t)=\dfrac{1}{2}\left(P^tV_iP^{1-t}+P^{1-t}V_iP^t\right),\quad i\in\{1,\ldots,d\}.
			\end{equation*}
			\noindent Obviously, $\widehat{\mathbf{T}}(t)=\widehat{\mathbf{T}}(1-t)$, $\widehat{\mathbf{T}}(0)=\widehat{\mathbf{T}}(1)=\widehat{\mathbf{T}}$ and $\widehat{\mathbf{T}}(\frac{1}{2})=\widetilde{\mathbf{T}}$.
		\end{itemize}

		For more details on the mentioned concepts, we refer the reader to \cite{BenhidaCurtoLee22, CurtoYoon16, CurtoYoon18, FekiYamazaki21, KimYoon17, Stankovic24a, Stanković_2024}. 
				Finally, we can extend the notion of the $\lambda$-mean transform to a multivariable setting,
		as well.
		\begin{definition}
			Let $\mathbf{T}=\mathbf{V}P=(V_1P,\ldots, V_dP)\in\mathfrak{B}(\mathcal{H})^d$ be the canonical spherical polar
			decomposition of a $d$-tuple $\mathbf{T}$ and let $\lambda\in [0, 1]$. The spherical $\lambda$-mean transform of $\mathbf{T}$ is defined as
			$$\mathbf{M}_\lambda(\mathbf{T})=(\mathbf{M}_\lambda^1(\mathbf{T}),\ldots, \mathbf{M}_\lambda^d(\mathbf{T}))=\lambda \mathbf{T}+(1-\lambda)\mathbf{T}^D.$$
			In particular, $\mathbf{M}_0(\mathbf{T})=\mathbf{T}^D$ and $\mathbf{M}_{\frac{1}{2}}(\mathbf{T})=\widehat{\mathbf{T}}$.
		\end{definition}
		Note that many properties of the ordinary spherical mean transform (see \cite{Stankovic24a}) also hold for the spherical $\lambda$-mean transform.

		\medskip 
		
		The organization of the paper is as follows.  Section 2 presents several refinements of operator norm inequalities  using the generalized spherical Aluthge transform and the spherical Heinz transform.  Section 3 introduces the spherical Schatten $p$-norm for operator tuples and establishes related inequalities involving these transforms. Additionally, equality conditions for some of these inequalities are also discussed.  
		In Section 4, we define  the (joint) Schatten $p$-numerical radius and the Schatten hypo-$p$-norm for operator tuples, deriving several fundamental inequalities in this context.

		\bigskip
		
		\section{Operator norm inequalities}
		In this section, we have established some refinements of earlier results.\\
		For every unitarily invariant norm, we have the Heinz
		inequalities
		\begin{align}\label{Ineq_Heinz_0}
			2\vertiii{A^\frac{1}{2}XB^\frac{1}{2}}\leq \vertiii{A^\nu X B^{1-\nu}+A^{1-\nu}X B^\nu}\leq \vertiii{AX+XB},
		\end{align}
		for $A,B$, and $X$ be operators on a complex separable Hilbert
		space such that $A$ and $B$ are positive and for $0\leq \nu\leq 1$.
		
		The following inequality is a refinement of the second inequality in \eqref{Ineq_Heinz_0}, which follows from \cite[Corollary 3]{Kittaneh_IEOT_2010}. 
		Let $A, B$, and $X$ be operators such that $A$ and $B$ are positive.
		Then for $0 \leq\nu \leq 1$ and for every unitarily invariant norm,
		 \begin{equation}\label{Ref_Ineq_Heinz}
         \begin{split}
			\vertiii{A^\nu XB^{1-\nu}+A^{1-\nu} X B^\nu}&\leq 4r_0 \vertiii{A^\frac{1}{2}XB^\frac{1}{2}}+(1-2 r_0)\vertiii{AX+XB}\\&\leq \vertiii{AX+XB},
            \end{split}
		\end{equation}
		where $r_0=\min\{\nu, 1-\nu\}$.\\
		In \cite[Theorem 2]{Kittaneh_LMS_1993}, it was shown that for $A, B$ are positive operators in $\mathcal{B(H)}$ and $X\in \mathcal{B(H)}$,
		the inequality
		\begin{align}\label{Heinz_ineq_0}
			\vertiii{A^\nu XB^{1-\nu}}\leq \vertiii{AX}^\nu\vertiii{XB}^{1-\nu},
		\end{align}
		holds for every $\nu\in [0, 1]$.
  
		The following two results are refinements of \cite[Theorem 4.4]{Stanković_2024}.
		\begin{theorem}\label{thm:r0_norm_ineq}
			Let ${\bf T}=(T_1, \dots, T_d)\in \mathfrak{B}(\mathcal{H})^d$. Then 
			\begin{align*}
				\|\widehat{{\bf T}}(t)\|\leq 2r_0\|\widetilde{{\bf T}}\|+(1-2r_0)\|\widehat{{\bf T}}\|\leq \|\widehat{{\bf T}}\|\leq\|{\bf T}\|
			\end{align*}
			for any $t\in [0, 1]$, where $r_0=\min\{t, 1-t\}$.
			
			In particular,       $$\|\widetilde{{\bf T}}\|\leq\|\widehat{{\bf T}}\|\leq \|{\bf T}\|.$$
			
		\end{theorem}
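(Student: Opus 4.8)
The plan is to reduce the statement to the refined Heinz inequality \eqref{Ref_Ineq_Heinz} applied to the block operators furnished by Lemma \ref{lem:matrix_norm}. First I would set up the matrix dictionary. With $\mathbb{P}=\operatorname{diag}(P,\ldots,P)$ acting on $\mathcal{H}_d$ and $\mathbb{V}$ the partial isometry of Theorem \ref{thm:spher_quasi_matrix}, a direct entrywise computation shows that the operator matrix associated (via \eqref{eq:op_matrix_def}) with $\mathbb{P}^{s}\mathbb{V}\mathbb{P}^{s'}$ has $(i,1)$ entry $P^{s}V_iP^{s'}$ and vanishing remaining columns. Hence the block matrices of the transforms are $\mathbb{P}^{t}\mathbb{V}\mathbb{P}^{1-t}$ for $\widetilde{\mathbf{T}}(t)$, $\mathbb{P}^{1/2}\mathbb{V}\mathbb{P}^{1/2}$ for $\widetilde{\mathbf{T}}$, $\tfrac12(\mathbb{P}^{t}\mathbb{V}\mathbb{P}^{1-t}+\mathbb{P}^{1-t}\mathbb{V}\mathbb{P}^{t})$ for $\widehat{\mathbf{T}}(t)$, and $\tfrac12(\mathbb{P}\mathbb{V}+\mathbb{V}\mathbb{P})$ for $\widehat{\mathbf{T}}$. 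Applying Lemma \ref{lem:matrix_norm} to each tuple gives $\|\widehat{\mathbf{T}}(t)\|=\tfrac12\|\mathbb{P}^{t}\mathbb{V}\mathbb{P}^{1-t}+\mathbb{P}^{1-t}\mathbb{V}\mathbb{P}^{t}\|$, $\|\widetilde{\mathbf{T}}\|=\|\mathbb{P}^{1/2}\mathbb{V}\mathbb{P}^{1/2}\|$, and $\|\widehat{\mathbf{T}}\|=\tfrac12\|\mathbb{P}\mathbb{V}+\mathbb{V}\mathbb{P}\|$.

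Next I would invoke \eqref{Ref_Ineq_Heinz} on the Hilbert space $\mathcal{H}_d$ with $A=B=\mathbb{P}$ (positive), $X=\mathbb{V}$, $\nu=t$, and with $\vertiii{\cdot}$ taken to be the usual operator norm (which is unitarily invariant), obtaining
\begin{align*}
\bigl\|\mathbb{P}^{t}\mathbb{V}\mathbb{P}^{1-t}+\mathbb{P}^{1-t}\mathbb{V}\mathbb{P}^{t}\bigr\|
&\le 4r_0\,\bigl\|\mathbb{P}^{1/2}\mathbb{V}\mathbb{P}^{1/2}\bigr\|+(1-2r_0)\bigl\|\mathbb{P}\mathbb{V}+\mathbb{V}\mathbb{P}\bigr\|\\
&\le \bigl\|\mathbb{P}\mathbb{V}+\mathbb{V}\mathbb{P}\bigr\|.
\end{align*}
Dividing through by $2$ and translating via the dictionary above (so that $4r_0\cdot\tfrac12\|\mathbb{P}^{1/2}\mathbb{V}\mathbb{P}^{1/2}\|=2r_0\|\widetilde{\mathbf{T}}\|$ and $\tfrac12\|\mathbb{P}\mathbb{V}+\mathbb{V}\mathbb{P}\|=\|\widehat{\mathbf{T}}\|$) yields precisely the first two asserted inequalities $\|\widehat{\mathbf{T}}(t)\|\le 2r_0\|\widetilde{\mathbf{T}}\|+(1-2r_0)\|\widehat{\mathbf{T}}\|\le\|\widehat{\mathbf{T}}\|$. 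The only point requiring care is the factor $\tfrac12$, which is exactly what turns $4r_0$ into $2r_0$.

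For the remaining inequality $\|\widehat{\mathbf{T}}\|\le\|\mathbf{T}\|$, I would bound $2\|\widehat{\mathbf{T}}\|=\|\mathbb{P}\mathbb{V}+\mathbb{V}\mathbb{P}\|\le\|\mathbb{V}\mathbb{P}\|+\|\mathbb{P}\mathbb{V}\|$ by the triangle inequality, where $\|\mathbb{V}\mathbb{P}\|=\|\mathbf{T}\|$ by Lemma \ref{lem:matrix_norm}. Since $\|\mathbb{P}\mathbb{V}\|^2=\|\mathbb{V}^*\mathbb{P}^2\mathbb{V}\|=\|\sum_{i=1}^{d}V_i^*P^2V_i\|$ and $\sum_{i=1}^{d}V_i^*P^2V_i\le\|P\|^2\sum_{i=1}^{d}V_i^*V_i\le\|P\|^2 I$ (using that $\sum_i V_i^*V_i$ is a projection), while $\|\mathbf{T}\|=\|P\|$ by Lemma \ref{lem:norm_P}, it follows that $\|\mathbb{P}\mathbb{V}\|\le\|\mathbf{T}\|$ and hence $\|\widehat{\mathbf{T}}\|\le\|\mathbf{T}\|$; alternatively this last inequality is already contained in \cite[Theorem 4.4]{Stanković_2024}. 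Finally, the ``in particular'' statement is immediate: for any $t\in(0,1)$ the middle inequality rearranges to $2r_0\|\widetilde{\mathbf{T}}\|\le 2r_0\|\widehat{\mathbf{T}}\|$, whence $\|\widetilde{\mathbf{T}}\|\le\|\widehat{\mathbf{T}}\|\le\|\mathbf{T}\|$ (the endpoint cases being trivial). I expect the main obstacle to be nothing deep but rather the careful verification of the block-matrix dictionary and the bookkeeping of the factors $\tfrac12$; once these are in place, the inequalities drop out of \eqref{Ref_Ineq_Heinz} and \eqref{Ineq_Heinz_0}.
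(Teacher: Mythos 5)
Your proposal is correct and follows essentially the same route as the paper: the block-matrix dictionary of Lemma \ref{lem:matrix_norm} giving $\|\widehat{\mathbf{T}}(t)\|=\tfrac12\|\mathbb{P}^{t}\mathbb{V}\mathbb{P}^{1-t}+\mathbb{P}^{1-t}\mathbb{V}\mathbb{P}^{t}\|$, followed by the refined Heinz inequality \eqref{Ref_Ineq_Heinz} with $A=B=\mathbb{P}$, $X=\mathbb{V}$, with the same factor-$\tfrac12$ bookkeeping turning $4r_0$ into $2r_0$. The only (harmless) difference is in the final inequality $\|\widehat{\mathbf{T}}\|\leq\|\mathbf{T}\|$, which the paper obtains by citing \cite[Theorem 4.4]{Stanković_2024}, whereas you additionally give a valid self-contained argument via the triangle inequality and the bound $\|\mathbb{P}\mathbb{V}\|\leq\|P\|=\|\mathbf{T}\|$.
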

		\begin{proof}
			Let $t\in [0, 1]$ be arbitrary and ${\bf T}={\bf V}P=(V_1P, \dots , V_d P)$ be the spherical polar decomposition of ${\bf T}$.
			Let $\mathbb{V}$  and $\mathbb{P}$ be as in Theorem \ref{thm:spher_quasi_matrix}. By Lemma \ref{lem:matrix_norm},
			\begin{equation}\label{Eq_00}
				\begin{split}
					\|\widehat{{\bf T}}(t)\|&=\frac{1}{2}\left\|\begin{bmatrix}
						P^tV_1P^{1-t}+P^{1-t}V_1P^t & 0& \dots& 0\\
						\vdots & \vdots && \vdots\\
						P^tV_dP^{1-t}+P^{1-t}V_dP^t & 0& \dots& 0
					\end{bmatrix}\right\| \\
					&=\frac{1}{2}\|\mathbb{P}^t\mathbb{V}\mathbb{P}^{1-t}+\mathbb{P}^{1-t}\mathbb{V}\mathbb{P}^t\|.
				\end{split}
			\end{equation}
			Now, using \eqref{Ref_Ineq_Heinz}, we have
			\begin{align}\label{Eq_0}
				\|\widehat{{\bf T}}(t)\|&\leq 2r_0 \|\mathbb{P}^\frac{1}{2}\mathbb{V}\mathbb{P}^\frac{1}{2}\|+\frac{(1-2 r_0)}{2}\|\mathbb{PV}+\mathbb{VP}\|\nonumber\\
				&\leq \frac{1}{2}\|\mathbb{PV}+\mathbb{VP}\|=\|\widehat{{\bf T}}\|.
			\end{align}
			Therefore, 
			\begin{align}\label{Eq_1}
				2r_0 \|\mathbb{P}^\frac{1}{2}\mathbb{V}\mathbb{P}^\frac{1}{2}\|+\frac{(1-2 r_0)}{2}\|\mathbb{PV}+\mathbb{VP}\| = 2r_0\|\widetilde{{\bf T}}\|+(1-2r_0)\|\widehat{{\bf T}}\|.
			\end{align}
			Using \eqref{Eq_0} and \eqref{Eq_1}, we have
			\begin{align*}
				\|\widehat{{\bf T}}(t)\|\leq 2r_0\|\widetilde{{\bf T}}\|+(1-2r_0)\|\widehat{{\bf T}}\|\leq \|\widehat{{\bf T}}\|.
			\end{align*}
			Now using the inequality of \cite[Theorem 4.4]{Stanković_2024}, we get
			\begin{align*}
				\|\widehat{{\bf T}}(t)\|\leq 2r_0\|\widetilde{{\bf T}}\|+(1-2r_0)\|\widehat{{\bf T}}\|\leq \|\widehat{{\bf T}}\|\leq\|{\bf T}\|.
			\end{align*}
		\end{proof}

		\begin{theorem}\label{thm:ineq_td_sum}
			Let ${\bf T}=(T_1, \dots, T_d)\in \mathfrak{B}(\mathcal{H})^d$. Then 
			\begin{align}\label{eq:ineq_td_sum}
				\|\widetilde{{\bf T}}\|\leq \|\widehat{{\bf T}}(t)\|\leq \frac{1}{2}(\|{\bf T}^D\|^t\|{\bf T}\|^{1-t}+\|{\bf T}^D\|^{1-t}\|{\bf T}\|^t)\leq \|{\bf T}\|
			\end{align}
			for any $t\in [0, 1]$.
			
			In particular,         $$\|\widetilde{{\bf T}}\|\leq \|\widehat{{\bf T}}(t)\|\leq\frac{1}{2}( \|{\bf T}^D\|+\|{\bf T}\|)\leq \|{\bf T}\|.$$
		\end{theorem}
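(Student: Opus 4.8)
The plan is to transfer the entire chain to the operator matrices $\mathbb{P}$ and $\mathbb{V}$ on $\mathcal{H}_d$ from Theorem \ref{thm:spher_quasi_matrix}, exactly as in the proof of Theorem \ref{thm:r0_norm_ineq}, and then apply the two scalar Heinz-type inequalities \eqref{Ineq_Heinz_0} and \eqref{Heinz_ineq_0} with the positive operator $A=B=\mathbb{P}$ and $X=\mathbb{V}$. By Lemma \ref{lem:matrix_norm} we have, as computed in \eqref{Eq_00}, $\|\widehat{\mathbf{T}}(t)\|=\tfrac{1}{2}\|\mathbb{P}^t\mathbb{V}\mathbb{P}^{1-t}+\mathbb{P}^{1-t}\mathbb{V}\mathbb{P}^t\|$, and likewise $\|\widetilde{\mathbf{T}}\|=\|\mathbb{P}^{1/2}\mathbb{V}\mathbb{P}^{1/2}\|$. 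The first thing I would record is the bookkeeping identity that the block products are the operator matrices of the Duggal transform and of $\mathbf{T}$ in the sense of \eqref{eq:op_matrix_def}, namely $\mathbb{P}\mathbb{V}=\mathbb{T}^D$ and $\mathbb{V}\mathbb{P}=\mathbb{T}$, so that $\|\mathbb{P}\mathbb{V}\|=\|\mathbf{T}^D\|$ and $\|\mathbb{V}\mathbb{P}\|=\|\mathbf{T}\|$ by Lemma \ref{lem:matrix_norm}.

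The leftmost inequality is then immediate from the first Heinz inequality in \eqref{Ineq_Heinz_0} with $\nu=t$, which reads $2\|\mathbb{P}^{1/2}\mathbb{V}\mathbb{P}^{1/2}\|\le\|\mathbb{P}^t\mathbb{V}\mathbb{P}^{1-t}+\mathbb{P}^{1-t}\mathbb{V}\mathbb{P}^t\|$, i.e. $\|\widetilde{\mathbf{T}}\|\le\|\widehat{\mathbf{T}}(t)\|$. For the middle inequality I would use the triangle inequality and then apply \eqref{Heinz_ineq_0} to each summand, once with $\nu=t$ and once with $\nu=1-t$:
$$\|\mathbb{P}^t\mathbb{V}\mathbb{P}^{1-t}\|\le\|\mathbb{P}\mathbb{V}\|^{t}\,\|\mathbb{V}\mathbb{P}\|^{1-t}=\|\mathbf{T}^D\|^{t}\|\mathbf{T}\|^{1-t},$$
and symmetrically for $\|\mathbb{P}^{1-t}\mathbb{V}\mathbb{P}^{t}\|$; dividing the sum by $2$ gives the asserted upper bound for $\|\widehat{\mathbf{T}}(t)\|$.

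For the rightmost inequality I would first prove $\|\mathbf{T}^D\|\le\|\mathbf{T}\|$, which follows from $\|\mathbf{T}^D\|=\|\mathbb{P}\mathbb{V}\|\le\|\mathbb{P}\|\,\|\mathbb{V}\|$ together with $\|\mathbb{P}\|=\|P\|=\|\mathbf{T}\|$ (the latter by Lemma \ref{lem:norm_P}, since $P^2=\sum_{k}T_k^*T_k$) and $\|\mathbb{V}\|=\|\mathbf{V}\|\le1$, the last bound holding because $\mathbf{V}$ is a spherical partial isometry so that $\sum_k V_k^*V_k$ is a projection. Writing $a=\|\mathbf{T}^D\|\le b=\|\mathbf{T}\|$, monotonicity of $s\mapsto a^s$ yields $a^tb^{1-t}\le b$ and $a^{1-t}b^t\le b$, hence their average is at most $b=\|\mathbf{T}\|$, which closes the main chain. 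The \emph{in particular} statement follows by inserting the weighted arithmetic--geometric mean bounds $a^tb^{1-t}\le ta+(1-t)b$ and $a^{1-t}b^t\le(1-t)a+tb$, which add up to $\tfrac{1}{2}(a^tb^{1-t}+a^{1-t}b^t)\le\tfrac{1}{2}(a+b)=\tfrac{1}{2}(\|\mathbf{T}^D\|+\|\mathbf{T}\|)$.

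The whole argument is routine once the reduction to $\mathbb{P}$ and $\mathbb{V}$ is set up; the only point that genuinely needs care is the identification $\mathbb{P}\mathbb{V}=\mathbb{T}^D$ and $\mathbb{V}\mathbb{P}=\mathbb{T}$, since this is exactly what converts the abstract Heinz products into the norms of the Duggal transform and of $\mathbf{T}$ that appear in \eqref{eq:ineq_td_sum}.
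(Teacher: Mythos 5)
Your proof is correct and follows essentially the same route as the paper's: the reduction to $\mathbb{P},\mathbb{V}$ via \eqref{Eq_00}, the first Heinz inequality of \eqref{Ineq_Heinz_0} for the lower bound, the triangle inequality plus \eqref{Heinz_ineq_0} for the middle bound, and $\|\mathbf{T}^D\|=\|\mathbb{P}\mathbb{V}\|\le\|\mathbb{P}\|\|\mathbb{V}\|\le\|\mathbf{T}\|$ for the last step. You merely spell out two points the paper leaves implicit (the identifications $\|\mathbb{P}\mathbb{V}\|=\|\mathbf{T}^D\|$, $\|\mathbb{V}\mathbb{P}\|=\|\mathbf{T}\|$ and the Heinz-to-arithmetic-mean bound for the ``in particular'' part), which is a welcome clarification but not a different argument.
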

		\begin{proof}
			Let $t\in [0, 1]$ be arbitrary and ${\bf T}={\bf V}P=(V_1P, \dots , V_d P)$ be the spherical polar decomposition of ${\bf T}$.
			Using \eqref{Eq_00} and the triangle inequality, we have that
			\begin{align*}
				\|\widehat{{\bf T}}(t)\|
				&=\frac{1}{2}\|\mathbb{P}^t\mathbb{V}\mathbb{P}^{1-t}+\mathbb{P}^{1-t}\mathbb{V}\mathbb{P}^t\|\\
				&\leq \frac{1}{2}\|\mathbb{P}^t\mathbb{V}\mathbb{P}^{1-t}\|+\|\mathbb{P}^{1-t}\mathbb{V}\mathbb{P}^t\|.
			\end{align*}
			Now, using \eqref{Heinz_ineq_0}, we have
			\begin{align}\label{Eq_10}
				\|\widehat{{\bf T}}(t)\|&\leq  \frac{1}{2}\|\mathbb{PV}\|^{t}\|\mathbb{VP}\|^{1-t}+\frac{1}{2}\|\mathbb{PV}\|^{1-t}\|\mathbb{VP}\|^{t}\nonumber\\
				&= \frac{1}{2}(\|{\bf T}^D\|^t\|{\bf T}\|^{1-t}+\|{\bf T}^D\|^{1-t}\|{\bf T}\|^{t}).
			\end{align}
			Now, 
			\begin{align}\label{Eq_11}
				\|{\bf T}^D\|=\|\mathbb{PV}\|\leq \|\mathbb{V}\|\|\mathbb{P}\|=\|{\bf T}\|\left\|\sum_{i=1}^dV_i^*V_i\right\|^{\frac{1}{2}}\leq \|{\bf T}\|.
			\end{align}
			Using \eqref{Eq_10} and \eqref{Eq_11}, we have
			\begin{align}\label{EQ_11}
				\|\widehat{{\bf T}}(t)\|
				&\leq \frac{1}{2}(\|{\bf T}^D\|^t\|{\bf T}\|^{1-t}+\|{\bf T}^D\|^{1-t}\|{\bf T}\|^{t})\leq \|{\bf T}\|.
			\end{align}
			Let $A=B=\mathbb{P}, X=\mathbb{V}$ in the first inequality of \eqref{Ineq_Heinz_0}, we obtain
			\begin{align}\label{EQ_12}
				\|\widetilde{{\bf T}}\|=\|\mathbb{P}^{\frac{1}{2}}\mathbb{V}\mathbb{P}^{\frac{1}{2}}\|\leq \frac{1}{2}\|\mathbb{P}^t\mathbb{V}\mathbb{P}^{1-t}+\mathbb{P}^{1-t}\mathbb{V}\mathbb{P}^t\|=\|\widehat{{\bf T}}(t)\|
			\end{align}
			Now combining \eqref{EQ_11} and \eqref{EQ_12}, we obtain
			\begin{align*}
				\|\widetilde{{\bf T}}\|\leq \|\widehat{{\bf T}}(t)\|\leq \frac{1}{2}(\|{\bf T}^D\|^t\|{\bf T}\|^{1-t}+\|{\bf T}^D\|^{1-t}\|{\bf T}\|^t)\leq \|{\bf T}\|.
			\end{align*}
		\end{proof}

		Inspired by \cite[Theorem 3.2]{Zamani_JMAA_2021}, we extend the one-dimensional case to a multivariable operator setting. More precisely, 
		the following theorem is a generalization of \cite[Eq. (4.5)]{Stanković_2024}. (We emphasize that in \cite{Stanković_2024}, the hypo-norm of a tuple $\mathbf{T}$  is denoted by $\| \mathbf{T}\|_e$, which is here reserved for the Euclidean norm of $\mathbf{T}$. We also remark that there is a typo in \cite[Eq. (4.4)]{Stanković_2024} and \cite[Eq. (4.5)]{Stanković_2024}, as the right hand side of the second inequality in both equations should be  $\norm{\mathbf{T}}$ instead of $\norm{\mathbf{T}}_e$.)

			\begin{theorem}\label{thm:lambda_mean} 
				Let ${\bf T}=(T_1, \dots, T_d)\in \mathfrak{B}(\mathcal{H})^d$. Then  for each $0\leq \lambda\leq 1$,
				\begin{align}\label{Eq_2.18}
					2\sqrt{\lambda-\lambda^2}\|\widetilde{{\bf T}}\|_h \leq \|\mathbf{M}_\lambda(\mathbf{T})\|_h\leq \lambda \|{\bf T}\|_h+(1-\lambda)\|{\bf T}^D\|_h \leq  \|{\bf T}\|.
				\end{align}
				In particular,
				for $\lambda=\frac{1}{2}$,
				\begin{align*}
					\|\widetilde{{\bf T}}\|_h \leq \|\widehat{{\bf T}}\|_h \leq \|{\bf T}\|.
				\end{align*}
			\end{theorem}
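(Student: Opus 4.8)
The plan is to reduce these tuple inequalities to one-variable operator estimates by exploiting the way the hypo-norm scalarizes the spherical transforms. Fix the canonical spherical polar decomposition $\mathbf{T}=\mathbf{V}P=(V_1P,\ldots,V_dP)$, and for $\bm{\mu}=(\mu_1,\ldots,\mu_d)\in\overline{\mathbb{B}}_d$ write $V_{\bm{\mu}}=\sum_{i=1}^d\mu_iV_i$. A direct computation then gives the identities
\[
\sum_{i=1}^d\mu_i\widetilde{\mathbf{T}}_i=P^{1/2}V_{\bm{\mu}}P^{1/2},\qquad \sum_{i=1}^d\mu_i\mathbf{M}_\lambda^i(\mathbf{T})=\lambda V_{\bm{\mu}}P+(1-\lambda)PV_{\bm{\mu}},
\]
together with $\sum_{i=1}^d\mu_iT_i=V_{\bm{\mu}}P$ and $\sum_{i=1}^d\mu_iT_i^D=PV_{\bm{\mu}}$. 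Consequently, the hypo-norm of each of the tuples $\widetilde{\mathbf{T}}$, $\mathbf{M}_\lambda(\mathbf{T})$, $\mathbf{T}$, $\mathbf{T}^D$ is the supremum over $\bm{\mu}\in\overline{\mathbb{B}}_d$ of the ordinary operator norm of the corresponding single operator.

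First I would establish the leftmost inequality, whose heart is the one-variable estimate
\[
2\sqrt{\lambda-\lambda^2}\,\bigl\|P^{1/2}XP^{1/2}\bigr\|\leq\bigl\|\lambda XP+(1-\lambda)PX\bigr\|,
\]
valid for every $X\in\mathfrak{B}(\mathcal{H})$ and $0\leq\lambda\leq1$. To prove it, set $A=(1-\lambda)P\geq0$ and $B=\lambda P\geq0$, so that $\lambda XP+(1-\lambda)PX=AX+XB$ while $A^{1/2}XB^{1/2}=\sqrt{\lambda(1-\lambda)}\,P^{1/2}XP^{1/2}$. The arithmetic-geometric mean inequality $2\vertiii{A^{1/2}XB^{1/2}}\leq\vertiii{AX+XB}$, which is the first inequality in \eqref{Ineq_Heinz_0} evaluated at $\nu=0$, then yields the claim directly after pulling the scalar out of the unitarily invariant norm. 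Applying this estimate with $X=V_{\bm{\mu}}$ and taking the supremum over $\bm{\mu}$ gives $2\sqrt{\lambda-\lambda^2}\,\|\widetilde{\mathbf{T}}\|_h\leq\|\mathbf{M}_\lambda(\mathbf{T})\|_h$, since each term $\|\lambda V_{\bm{\mu}}P+(1-\lambda)PV_{\bm{\mu}}\|$ is dominated by $\|\mathbf{M}_\lambda(\mathbf{T})\|_h$.

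The middle inequality is immediate: because $\mathbf{M}_\lambda(\mathbf{T})=\lambda\mathbf{T}+(1-\lambda)\mathbf{T}^D$ and $\|\cdot\|_h$ is a norm, subadditivity and homogeneity give $\|\mathbf{M}_\lambda(\mathbf{T})\|_h\leq\lambda\|\mathbf{T}\|_h+(1-\lambda)\|\mathbf{T}^D\|_h$. For the rightmost inequality I would combine the equivalence $\|\cdot\|_h\leq\|\cdot\|$ recalled in the introduction with the bound $\|\mathbf{T}^D\|\leq\|\mathbf{T}\|$ from \eqref{Eq_11}, obtaining $\lambda\|\mathbf{T}\|_h+(1-\lambda)\|\mathbf{T}^D\|_h\leq\lambda\|\mathbf{T}\|+(1-\lambda)\|\mathbf{T}\|=\|\mathbf{T}\|$. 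The special case $\lambda=\tfrac12$ then follows at once, since $2\sqrt{\lambda-\lambda^2}=1$ and $\mathbf{M}_{1/2}(\mathbf{T})=\widehat{\mathbf{T}}$.

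I expect the only real obstacle to be the one-variable estimate of the second step, and specifically recognizing that the weighted constant $2\sqrt{\lambda-\lambda^2}$ is produced by a rescaling of the unweighted operator arithmetic-geometric mean inequality rather than requiring a new argument. One subtlety deserves mention: $V_{\bm{\mu}}$ is in general not a partial isometry, so one cannot simply invoke the scalar result of \cite{Zamani_JMAA_2021} for the operator $\sum_i\mu_iT_i$, whose genuine polar decomposition is not $V_{\bm{\mu}}P$. This is harmless, however, because the Heinz inequality \eqref{Ineq_Heinz_0} holds for an arbitrary middle factor $X$.
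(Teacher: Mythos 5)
Your proposal is correct and takes essentially the same route as the paper's proof: both scalarize the tuples via $U_{\bm{\mu}}=\sum_{i=1}^d\mu_iV_i$, apply the arithmetic--geometric mean case of the Heinz inequality \eqref{Ineq_Heinz_0} with $A=(1-\lambda)P$, $B=\lambda P$, $X=U_{\bm{\mu}}$, take the supremum over $\overline{\mathbb{B}}_d$, and finish with the triangle inequality together with $\|\mathbf{T}^D\|_h\leq\|\mathbf{T}^D\|\leq\|\mathbf{T}\|$. The differences (stating the rescaled one-variable estimate as a standalone lemma, and the remark that $U_{\bm{\mu}}$ need not be a partial isometry) are purely presentational.
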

			\begin{proof}
				Let ${\bf T}={\bf V}P=(V_1P, \dots , V_d P)$ be the spherical polar decomposition of ${\bf T}$. It follows from \eqref{Ineq_Heinz_0} that 
				\begin{align}\label{Eq_2.19}
					\|A^\frac{1}{2}XB^\frac{1}{2}\|\leq \left\|\frac{AX+XB}{2}\right\|,
				\end{align}
				for positive operators $A, B$, and $X\in \mathfrak{B}(\mathcal{H})$.
				Let $\bm{\mu}=(\mu_1,\ldots,\mu_d)\in\overline{\mathbb{B}}_d$ be arbitrary and let $U_{\bm{\mu}}=\sum_{i=1}^d\mu_iV_i$. Setting $A=(1-\lambda)P$, $X=U_{\bm{\mu}}$ and $B=\lambda P$ in \eqref{Eq_2.19}, we obtain
				\begin{align}\label{Eq2.20}
				2	\sqrt{\lambda-\lambda^2}\|P^{\frac{1}{2}}U_{\bm{\mu}} P^{\frac{1}{2}}\|\leq \|(1-\lambda)PU_{\bm{\mu}}+\lambda U_{\bm{\mu}} P\|.
				\end{align}
				Next, note that 
				\begin{align*}
					\mu_1 \widetilde{\mathbf{T}}_1+\cdots+\mu_d \widetilde{\mathbf{T}}_d&=P^{\frac{1}{2}}U_{\bm{\mu}} P^{\frac{1}{2}},\\
					\mu_1\mathbf{M}_\lambda^1(\mathbf{T})+\cdots+\mu_d\mathbf{M}_\lambda^d(\mathbf{T})&=(1-\lambda)PU_{\bm{\mu}}+\lambda U_{\bm{\mu}} P.
				\end{align*}
				By taking the supremum over all $(\mu_1,\ldots,\mu_d)\in\overline{\mathbb{B}}_d$ in \eqref{Eq2.20}, we have 
				\begin{align}\label{Eq2.21}
				2	\sqrt{\lambda-\lambda^2}\|\widetilde{{\bf T}}\|_h\leq  \|\mathbf{M}_\lambda(\mathbf{T})\|_h.
				\end{align}
				Further, since $\mathbf{M}_\lambda(\mathbf{T})=\lambda \mathbf{T}+(1-\lambda)\mathbf{T}^D$, and using the fact $\|{\bf T}^D\|_h\leq \|{\bf T}^D\|\leq \|{\bf T}\|$, we have
				\begin{align}\label{Eq2.22}
					\|\mathbf{M}_\lambda(\mathbf{T})\|_h\leq\lambda \|\mathbf{T}\|_h+(1-\lambda)\|\mathbf{T}^D\|_h \leq\|{\bf T}\|.
				\end{align}
				Now, \eqref{Eq_2.18} follows from \eqref{Eq2.21} and \eqref{Eq2.22}.
			\end{proof}
			
			Using a similar technique, we obtain the following theorem, which is a refinement of \cite[Theorem 4.5]{Stanković_2024}. 
			\begin{theorem}
			Let ${\bf T}=(T_1, \dots, T_d)\in \mathfrak{B}(\mathcal{H})^d$. Then 
			\begin{align*}
				\|\widehat{{\bf T}}(t)\|_h\leq 2r_0\|\widetilde{{\bf T}}\|_h+(1-2r_0)\|\widehat{{\bf T}}\|_h\leq \|\widehat{{\bf T}}\|_h\leq\|{\bf T}\|
			\end{align*}
			for any $t\in [0, 1]$, 	where $r_0=\min\{t, 1-t\}$.
			
			In particular,        $$\|\widetilde{{\bf T}}\|_h\leq\|\widehat{{\bf T}}\|_h\leq \|{\bf T}\|.$$
			
			\end{theorem}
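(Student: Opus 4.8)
The plan is to adapt the argument of Theorem \ref{thm:r0_norm_ineq} to the hypo-norm by running it through a single scalar-combined operator, exactly as in the proof of Theorem \ref{thm:lambda_mean}. First I would fix the spherical polar decomposition $\mathbf{T}=\mathbf{V}P=(V_1P,\ldots,V_dP)$ and, for an arbitrary $\bm{\mu}=(\mu_1,\ldots,\mu_d)\in\overline{\mathbb{B}}_d$, set $U_{\bm{\mu}}=\sum_{i=1}^d\mu_iV_i$. The point is that forming this scalar combination commutes with all the transforms, yielding the identities
$$\sum_{i=1}^d\mu_i\widetilde{\mathbf{T}}_i=P^{1/2}U_{\bm{\mu}}P^{1/2},\qquad \sum_{i=1}^d\mu_i\widehat{\mathbf{T}}_i=\frac{1}{2}\left(PU_{\bm{\mu}}+U_{\bm{\mu}}P\right),$$
and, most importantly,
$$\sum_{i=1}^d\mu_i\widehat{\mathbf{T}}_i(t)=\frac{1}{2}\left(P^tU_{\bm{\mu}}P^{1-t}+P^{1-t}U_{\bm{\mu}}P^t\right).$$

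With these identities in hand, I would apply the refined Heinz inequality \eqref{Ref_Ineq_Heinz} with the choices $A=B=P$ (positive), $X=U_{\bm{\mu}}$, and $\nu=t$, so that $r_0=\min\{t,1-t\}$ matches the statement. Dividing by $2$ and rewriting the three norms via the identities above gives, for each fixed $\bm{\mu}\in\overline{\mathbb{B}}_d$,
$$\left\|\sum_{i=1}^d\mu_i\widehat{\mathbf{T}}_i(t)\right\|\leq 2r_0\left\|\sum_{i=1}^d\mu_i\widetilde{\mathbf{T}}_i\right\|+(1-2r_0)\left\|\sum_{i=1}^d\mu_i\widehat{\mathbf{T}}_i\right\|.$$
Here the factor arithmetic works out cleanly: the coefficient $4r_0$ in \eqref{Ref_Ineq_Heinz} becomes $2r_0$ after the division by $2$, matching the coefficient of $\|\widetilde{\mathbf{T}}\|_h$.

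The key step is then to pass to the supremum over $\bm{\mu}\in\overline{\mathbb{B}}_d$. Taking the supremum on the left produces $\|\widehat{\mathbf{T}}(t)\|_h$, while on the right I would use the sub-additivity of the supremum, $\sup(f+g)\leq\sup f+\sup g$, together with positive homogeneity, to obtain $2r_0\|\widetilde{\mathbf{T}}\|_h+(1-2r_0)\|\widehat{\mathbf{T}}\|_h$; this is exactly the step where one must check that the supremum distributes in the favourable direction. This establishes the first inequality. For the second inequality I would invoke $\|\widetilde{\mathbf{T}}\|_h\leq\|\widehat{\mathbf{T}}\|_h$ from the particular case of Theorem \ref{thm:lambda_mean}: since $r_0\in[0,\tfrac12]$ gives $2r_0\in[0,1]$, the middle term is a convex combination of $\|\widetilde{\mathbf{T}}\|_h$ and $\|\widehat{\mathbf{T}}\|_h$ and is therefore dominated by $\|\widehat{\mathbf{T}}\|_h$. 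Finally, the bound $\|\widehat{\mathbf{T}}\|_h\leq\|\mathbf{T}\|$ is again supplied by Theorem \ref{thm:lambda_mean}, completing the chain; the ``in particular'' statement is then the special case obtained by discarding the first term.

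The only genuinely delicate point is the supremum step: the pointwise (in $\bm{\mu}$) inequality must survive the transition to hypo-norms, and it does so precisely because we bound the supremum of a sum by the sum of the suprema. Everything else is a direct verification of the $U_{\bm{\mu}}$-identities and an application of the already-established inequality \eqref{Ref_Ineq_Heinz}, so I expect no further obstacle beyond bookkeeping.
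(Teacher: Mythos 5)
Your proposal is correct and is exactly the argument the paper intends: the theorem is stated with the remark that it follows ``using a similar technique,'' namely applying the refined Heinz inequality \eqref{Ref_Ineq_Heinz} with $A=B=P$, $X=U_{\bm{\mu}}=\sum_{i=1}^d\mu_iV_i$ as in Theorem \ref{thm:r0_norm_ineq}, and then passing to suprema over $\bm{\mu}\in\overline{\mathbb{B}}_d$ as in Theorem \ref{thm:lambda_mean}. Your handling of the supremum step (nonnegative coefficients plus $\sup$ of a sum bounded by the sum of $\sup$'s) and your use of $\|\widetilde{\mathbf{T}}\|_h\leq\|\widehat{\mathbf{T}}\|_h\leq\|\mathbf{T}\|$ from Theorem \ref{thm:lambda_mean} to close the chain are both sound.
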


 In a similar way as in Theorem \ref{thm:r0_norm_ineq} and Theorem \ref{thm:lambda_mean}, we can prove the following operator norm inequalities, which is a generalization of \cite[Eq. (4.2)]{Stanković_2024}. 
 \begin{theorem}
 	Let ${\bf T}=(T_1, \dots, T_d)\in \mathfrak{B}(\mathcal{H})^d$. Then  for each $0\leq \lambda\leq 1$,
 	\begin{align*}
 		2\sqrt{\lambda-\lambda^2}\|\widetilde{{\bf T}}\| \leq \|\mathbf{M}_\lambda(\mathbf{T})\|\leq \lambda \|{\bf T}\|+(1-\lambda)\|{\bf T}^D\| \leq  \|{\bf T}\|.
 	\end{align*}
 	In particular,
 	for $\lambda=\frac{1}{2}$,
 	\begin{align*}
 		\|\widetilde{{\bf T}}\| \leq \|\widehat{{\bf T}}\| \leq \|{\bf T}\|.
 	\end{align*}
 \end{theorem}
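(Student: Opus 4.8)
The plan is to run the matrix-based argument of Theorems \ref{thm:r0_norm_ineq} and \ref{thm:lambda_mean}, but now for the ordinary spherical operator norm instead of the hypo-norm, so that no supremum over $\overline{\mathbb{B}}_d$ is required. First I would fix the spherical polar decomposition ${\bf T}={\bf V}P=(V_1P,\dots,V_dP)$ and pass to the block operators $\mathbb{P}$ and $\mathbb{V}$ on $\mathcal{H}_d$ introduced in Theorem \ref{thm:spher_quasi_matrix}. By Lemma \ref{lem:matrix_norm} together with the computations already recorded in \eqref{Eq_11} and \eqref{EQ_12}, one has $\|{\bf T}\|=\|\mathbb{V}\mathbb{P}\|$, $\|{\bf T}^D\|=\|\mathbb{P}\mathbb{V}\|$ and $\|\widetilde{{\bf T}}\|=\|\mathbb{P}^{1/2}\mathbb{V}\mathbb{P}^{1/2}\|$. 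Since the $i$-th component of $\mathbf{M}_\lambda(\mathbf{T})$ equals $\lambda V_iP+(1-\lambda)PV_i$, the same lemma gives $\|\mathbf{M}_\lambda(\mathbf{T})\|=\|\lambda\mathbb{V}\mathbb{P}+(1-\lambda)\mathbb{P}\mathbb{V}\|$. These four identities reduce the whole statement to a single chain of inequalities between block operators.

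For the leftmost inequality I would apply the operator-norm Heinz inequality \eqref{Eq_2.19} with the positive block operators $A=(1-\lambda)\mathbb{P}$, $B=\lambda\mathbb{P}$ and with $X=\mathbb{V}$. As $\mathbb{P}\ge 0$, we have $A^{1/2}=\sqrt{1-\lambda}\,\mathbb{P}^{1/2}$ and $B^{1/2}=\sqrt{\lambda}\,\mathbb{P}^{1/2}$, so the left-hand side of \eqref{Eq_2.19} becomes $\sqrt{\lambda(1-\lambda)}\,\|\mathbb{P}^{1/2}\mathbb{V}\mathbb{P}^{1/2}\|$, while $AX+XB=(1-\lambda)\mathbb{P}\mathbb{V}+\lambda\mathbb{V}\mathbb{P}$ is exactly the block operator representing $\mathbf{M}_\lambda(\mathbf{T})$, so that the right-hand side equals $\tfrac{1}{2}\|\mathbf{M}_\lambda(\mathbf{T})\|$. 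This yields $2\sqrt{\lambda-\lambda^2}\,\|\widetilde{{\bf T}}\|\le\|\mathbf{M}_\lambda(\mathbf{T})\|$.

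The remaining two inequalities are then immediate. The triangle inequality applied to $\lambda\mathbb{V}\mathbb{P}+(1-\lambda)\mathbb{P}\mathbb{V}$ gives $\|\mathbf{M}_\lambda(\mathbf{T})\|\le\lambda\|{\bf T}\|+(1-\lambda)\|{\bf T}^D\|$, and the estimate $\|{\bf T}^D\|\le\|{\bf T}\|$ from \eqref{Eq_11} turns the convex combination into $\lambda\|{\bf T}\|+(1-\lambda)\|{\bf T}\|=\|{\bf T}\|$. Specializing to $\lambda=\tfrac{1}{2}$, where $\mathbf{M}_{1/2}(\mathbf{T})=\widehat{{\bf T}}$ and $2\sqrt{\lambda-\lambda^2}=1$, produces the displayed particular case $\|\widetilde{{\bf T}}\|\le\|\widehat{{\bf T}}\|\le\|{\bf T}\|$.

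There is no genuine analytic obstacle, since every ingredient has already been assembled earlier in the section; the only point requiring care is the bookkeeping that licenses the direct application of \eqref{Eq_2.19} to the block operators. In particular I would verify that $\mathbb{P}$ is positive (being block-diagonal with the positive operator $P$), that the scalar factors $\sqrt{1-\lambda}$ and $\sqrt{\lambda}$ are extracted correctly from $A^{1/2}$ and $B^{1/2}$, and that $\lambda\mathbb{V}\mathbb{P}+(1-\lambda)\mathbb{P}\mathbb{V}$ is precisely the image of $\mathbf{M}_\lambda(\mathbf{T})$ under the correspondence of Lemma \ref{lem:matrix_norm}. The factor $\sqrt{\lambda-\lambda^2}$ on the far left is exactly what this block computation generates, which is why the operator-norm statement parallels the hypo-norm statement of Theorem \ref{thm:lambda_mean} verbatim.
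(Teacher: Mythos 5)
Your proposal is correct and matches the paper's own (implicit) proof: the paper establishes this theorem only via the remark that it follows ``in a similar way as in Theorem \ref{thm:r0_norm_ineq} and Theorem \ref{thm:lambda_mean}'', which means exactly what you carry out — reduce everything to block operators via Lemma \ref{lem:matrix_norm} (so that $\|\mathbf{M}_\lambda(\mathbf{T})\|=\|\lambda\mathbb{V}\mathbb{P}+(1-\lambda)\mathbb{P}\mathbb{V}\|$, $\|\widetilde{\mathbf{T}}\|=\|\mathbb{P}^{1/2}\mathbb{V}\mathbb{P}^{1/2}\|$, $\|\mathbf{T}^D\|=\|\mathbb{P}\mathbb{V}\|$), then apply the Heinz inequality \eqref{Eq_2.19} with $A=(1-\lambda)\mathbb{P}$, $B=\lambda\mathbb{P}$, $X=\mathbb{V}$, and finish with the triangle inequality and the bound $\|\mathbf{T}^D\|\leq\|\mathbf{T}\|$ from \eqref{Eq_11}. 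Your bookkeeping of the scalar factors and the identification of the block column of $\mathbf{M}_\lambda(\mathbf{T})$ are both accurate, so there is nothing to correct.
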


		The following theorem follows from the same method as used in \cite{Stanković_2024} and from \cite[Theorem 6]{OmarKittaneh_LAA_2019}.
		\begin{theorem}
			Let ${\bf T}=(T_1, \dots, T_d)\in \mathfrak{B}(\mathcal{H})^d$. Then 
			\begin{align*}
				\omega(\widetilde{{\bf T}}) \leq \omega(\widehat{{\bf T}}(t))\leq  \omega(\widehat{{\bf T}}),
			\end{align*}
			for any $t\in [0, 1]$.
		\end{theorem}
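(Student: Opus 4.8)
The plan is to reduce this joint numerical radius inequality to the single-variable numerical-radius Heinz inequality of \cite[Theorem 6]{OmarKittaneh_LAA_2019}, mirroring the reduction already carried out in the proof of Theorem \ref{thm:lambda_mean}. Note that, unlike the spherical operator norm, the joint numerical radius has no convenient block-matrix description via Lemma \ref{lem:matrix_norm}, so the block-matrix route used in Theorem \ref{thm:r0_norm_ineq} is unavailable; the natural substitute is the hypo-numerical-radius identity $\omega(\mathbf{S})=\omega_h(\mathbf{S})$ from \cite[Proof of Theorem 1.19]{Popescu09}, which lets one compute the joint numerical radius of any tuple as a supremum of ordinary numerical radii of scalar combinations,
$$\omega(\mathbf{S})=\sup_{\bm{\mu}\in\overline{\mathbb{B}}_d}\omega\bigl(\mu_1 S_1+\cdots+\mu_d S_d\bigr).$$

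First I would fix the spherical polar decomposition $\mathbf{T}=\mathbf{V}P=(V_1P,\ldots,V_dP)$. Then, for an arbitrary $\bm{\mu}=(\mu_1,\ldots,\mu_d)\in\overline{\mathbb{B}}_d$, I would put $U_{\bm{\mu}}=\sum_{i=1}^d\mu_iV_i$ and record the three identities that follow by linearity directly from the definitions of the spherical Aluthge, Heinz, and mean transforms:
$$\sum_{i=1}^d\mu_i\widetilde{\mathbf{T}}_i=P^{\frac12}U_{\bm{\mu}}P^{\frac12},\qquad \sum_{i=1}^d\mu_i\widehat{\mathbf{T}}_i(t)=\tfrac12\bigl(P^tU_{\bm{\mu}}P^{1-t}+P^{1-t}U_{\bm{\mu}}P^t\bigr),\qquad \sum_{i=1}^d\mu_i\widehat{\mathbf{T}}_i=\tfrac12\bigl(PU_{\bm{\mu}}+U_{\bm{\mu}}P\bigr).$$
Applying the numerical-radius Heinz inequality of \cite[Theorem 6]{OmarKittaneh_LAA_2019} with the positive operators $A=B=P$, the operator $X=U_{\bm{\mu}}$, and the parameter $\nu=t$ then gives
$$\omega\bigl(P^{\frac12}U_{\bm{\mu}}P^{\frac12}\bigr)\leq \omega\Bigl(\tfrac12\bigl(P^tU_{\bm{\mu}}P^{1-t}+P^{1-t}U_{\bm{\mu}}P^t\bigr)\Bigr)\leq \omega\Bigl(\tfrac12\bigl(PU_{\bm{\mu}}+U_{\bm{\mu}}P\bigr)\Bigr).$$

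Finally, I would take the supremum over all $\bm{\mu}\in\overline{\mathbb{B}}_d$. Since each of the three terms above is, after the supremum, exactly $\omega(\widetilde{\mathbf{T}})$, $\omega(\widehat{\mathbf{T}}(t))$, and $\omega(\widehat{\mathbf{T}})$ respectively (using $\widehat{\mathbf{T}}(0)=\widehat{\mathbf{T}}$), and since the supremum is monotone, the chain of pointwise inequalities passes to the chain of the statement. The only genuinely nontrivial ingredient is the cited single-operator numerical-radius Heinz inequality in the case $A=B$; everything else is the now-standard hypo-numerical-radius bookkeeping. The one point to check with care is that the supremum may be distributed across the chain term by term, which is immediate here because each pointwise inequality holds uniformly in $\bm{\mu}$, so passing to suprema preserves the ordering.
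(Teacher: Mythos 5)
Your proposal is correct and is exactly the argument the paper has in mind: the paper gives no written proof, merely stating that the result ``follows from the same method as used in \cite{Stanković_2024} and from \cite[Theorem 6]{OmarKittaneh_LAA_2019}'', and that method is precisely your reduction --- the identity $\omega(\cdot)=\omega_h(\cdot)$, the linearity relations for $U_{\bm{\mu}}=\sum_{i}\mu_i V_i$ (as in the proof of Theorem \ref{thm:lambda_mean}), the single-operator numerical-radius Heinz inequality with $A=B=P$, and a supremum over $\bm{\mu}\in\overline{\mathbb{B}}_d$. Your write-up thus supplies the details the paper omits, with no gaps.
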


		\begin{theorem}
			Let ${\bf T}=(T_1, \dots, T_d)\in \mathfrak{B}(\mathcal{H})^d$. The following conditions are equivalent:
			\begin{enumerate}[label=\textit{(\roman*)}]
				\item $\mathbf{T^2}=\mathbf{0}$;
				\item $\widetilde{{\bf T}}(t)=\mathbf{0}$ for each $t\in (0,1]$;
				\item $\widetilde{{\bf T}}(t)=\mathbf{0}$ for some $t\in (0,1]$;
				\item $\widehat{{\bf T}}(t)=\mathbf{0}$ for each $t\in (0,1)$;
				\item $\widehat{{\bf T}}(t)=\mathbf{0}$ for some $t\in (0,1)$.
			\end{enumerate}
		\end{theorem}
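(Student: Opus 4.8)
\emph{Plan.} The plan is to pass to the spherical polar decomposition $\mathbf T=\mathbf VP$, with $P=\bigl(\sum_k T_k^*T_k\bigr)^{1/2}$, and to reduce the nilpotency condition (i) to a single clean statement about $P$ and the $V_j$. First I would record the identities $T_j=V_jP$, $\widetilde{\mathbf T}_j(t)=P^tV_jP^{1-t}$ and $2\widehat{\mathbf T}_j(t)=P^tV_jP^{1-t}+P^{1-t}V_jP^{t}$, together with the two facts furnished by the polar decomposition: $\bigcap_i\mathcal N(T_i)=\mathcal N(P)$, and $V_j=V_jQ$, where $Q$ is the orthogonal projection onto $\overline{\mathcal R(P)}=\mathcal N(P)^\perp$. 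The key reduction is then that $\mathbf T^2=\mathbf 0$ (that is, $T_iT_j=V_iPV_jP=0$ for all $i,j$) holds if and only if $PV_jP=0$ for every $j$. Indeed, for fixed $j$, the vanishing $T_iT_j=0$ for all $i$ is equivalent to $\mathcal R(T_j)\subseteq\bigcap_i\mathcal N(T_i)=\mathcal N(P)$, i.e. to $PT_j=PV_jP=0$; the converse is immediate.

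Second, I would isolate a range-kernel lemma governing the Aluthge part: for each $j$ and each $s\in(0,1]$ one has $P^sV_jP^{1-s}=0$ if and only if $PV_jP=0$. For $s\in(0,1)$ this follows by chasing ranges and kernels, using $\mathcal N(P^s)=\mathcal N(P)$, $\overline{\mathcal R(P^{1-s})}=\overline{\mathcal R(P)}$, and the continuity of $V_j$ (so that $V_j\mathcal R(P)\subseteq\mathcal N(P)$ upgrades to $V_j\overline{\mathcal R(P)}\subseteq\mathcal N(P)$); the endpoint $s=1$ uses in addition $V_j=V_jQ$. Granting this lemma, the equivalence of (i), (ii), (iii) is immediate: (i) is the same as $PV_jP=0$ for all $j$, which by the lemma is equivalent to $\widetilde{\mathbf T}(s)=\mathbf 0$ for any single $s\in(0,1]$ and hence for all such $s$; thus (i)$\Rightarrow$(ii)$\Rightarrow$(iii)$\Rightarrow$(i). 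The same lemma, applied at both $t$ and $1-t$, gives (i)$\Rightarrow$(iv) (each summand $P^tV_jP^{1-t}$ and $P^{1-t}V_jP^{t}$ vanishes), and (iv)$\Rightarrow$(v) is trivial.

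The only real content, and the step I expect to be the main obstacle, is (v)$\Rightarrow$(i): from $P^{t}V_jP^{1-t}+P^{1-t}V_jP^{t}=0$ for a single $t\in(0,1)$ I must recover $PV_jP=0$, and here the range-kernel bookkeeping is not enough, since the two summands could a priori cancel. I would prove the self-contained statement: if $P\ge 0$, $W\in\mathfrak B(\mathcal H)$ and $P^tWP^{1-t}+P^{1-t}WP^{t}=0$, then $PWP=0$. The idea is to first multiply the relation on the left by $P^{\alpha}$ and on the right by $P^{\beta}$ for arbitrary integers $\alpha,\beta\ge0$, producing the family $P^{\alpha+t}WP^{\beta+1-t}+P^{\alpha+1-t}WP^{\beta+t}=0$. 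Fixing $x,y\in\mathcal H$ and writing $P=\int\lambda\,dE(\lambda)$ by the spectral theorem, I introduce the finite complex measure $\nu(\Delta\times\Delta')=\langle E(\Delta)WE(\Delta')x,y\rangle$ on $K=[0,\|P\|]^2$, so that these relations read $\int\!\int\lambda^{\alpha}\mu^{\beta}\,w(\lambda,\mu)\,d\nu=0$ with $w(\lambda,\mu)=\lambda^{t}\mu^{1-t}+\lambda^{1-t}\mu^{t}$. Setting $\sigma=w\,\nu$, every monomial moment of $\sigma$ vanishes, so by the Stone--Weierstrass theorem (polynomials are dense in $C(K)$) the finite measure $\sigma$ is zero. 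Finally, AM--GM gives $w(\lambda,\mu)\ge 2\sqrt{\lambda\mu}$, whence $\lambda\mu=\phi\,w$ with $\phi=\lambda\mu/w$ bounded by $\tfrac12\|P\|$ (and set equal to $0$ where $w=0$); therefore $\langle PWPx,y\rangle=\int\!\int\lambda\mu\,d\nu=\int\!\int\phi\,d\sigma=0$, and as $x,y$ are arbitrary, $PWP=0$. Applying this with $W=V_j$ yields $PV_jP=0$ for every $j$, which is exactly (i), closing the cycle. I expect the delicate points to be justifying that $\sigma$ is a genuine finite Radon measure and that the weight $w$ is strictly positive on $(0,\infty)^2$ (so that no spectral mass is lost away from the axes), both of which are handled by the AM--GM bound above.
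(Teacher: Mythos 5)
Most of your architecture is sound, and parts of it are more self-contained than the paper's own proof: your reduction of (i) to ``$PV_jP=0$ for all $j$'', your range--kernel lemma for $P^sV_jP^{1-s}$, and the resulting chain (i)$\Leftrightarrow$(ii)$\Leftrightarrow$(iii), (i)$\Rightarrow$(iv)$\Rightarrow$(v) are all correct (the paper instead cites \cite[Theorem 2.7]{Stanković_2024} for the first three equivalences). The genuine gap is in the step you yourself flagged as the real content, (v)$\Rightarrow$(i). Your argument rests on the assertion that $\nu(\Delta\times\Delta')=\langle E(\Delta)WE(\Delta')x,y\rangle$ defines a finite complex measure on $K=[0,\|P\|]^2$. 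In infinite dimensions this is false in general: $\nu$ is only a \emph{bimeasure} (separately countably additive on measurable rectangles), and bimeasures coming from bounded operators need not have finite total variation on the product $\sigma$-algebra, hence need not extend to a countably additive measure on $K$. Concretely, over a grid $\{\Delta_i\times\Delta_j'\}$ the only general bound is $\sum_{i,j}|\nu(\Delta_i\times\Delta_j')|\le\|W\|\sum_i\|E(\Delta_i)y\|\sum_j\|E(\Delta_j')x\|\le\|W\|\,\|x\|\,\|y\|\sqrt{nm}$, and this growth is genuinely attained (take $W$ a direct sum of $n\times n$ unitary Fourier blocks, all matrix entries of modulus $n^{-1/2}$, with $x=y$ spread evenly across the corresponding spectral subspaces). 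Without a true measure, the Stone--Weierstrass step collapses: the functional $f\mapsto\langle g(P)Wh(P)x,y\rangle$ is defined only on finite sums $f=\sum_k g_k(\lambda)h_k(\mu)$, and it is controlled by the projective norm $\sum_k\|g_k\|_\infty\|h_k\|_\infty$, not by $\|f\|_\infty$; so the vanishing of all monomial moments of $\sigma$ does not allow you to evaluate, let alone kill, the pairing with the non-tensor function $\phi=\lambda\mu/w$ --- that pairing is not even well defined. The AM--GM bound controls $\phi$ pointwise, but it does not address this extension problem, which is exactly what you deferred to it.

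The lemma you are after (if $P\ge0$ and $P^tWP^{1-t}+P^{1-t}WP^t=0$ for some $t\in(0,1)$, then $PWP=0$) is true, and there are two standard ways to get it. The paper's way is the first Heinz inequality in \eqref{Ineq_Heinz_0}: $2\|P^{1/2}WP^{1/2}\|\le\|P^tWP^{1-t}+P^{1-t}WP^t\|=0$, which is exactly the first inequality of \eqref{eq:ineq_td_sum}; thus $\|\widetilde{\mathbf{T}}\|\le\|\widehat{\mathbf{T}}(t)\|=0$, giving (v)$\Rightarrow$(iii) in one line, after which your lemma closes the cycle. Alternatively, compress the identity by spectral projections $E_1=E([a,\|P\|])$, $E_2=E([b,\|P\|])$ with $a,b>0$: on their ranges $P$ restricts to invertible positive operators $P_1,P_2$, and (for $t\neq\tfrac12$; the case $t=\tfrac12$ is immediate) multiplying by $P_1^{-t}$ and $P_2^{-t}$ turns the compressed identity into the Sylvester equation $P_1^{1-2t}W_{12}+W_{12}P_2^{1-2t}=0$ with $W_{12}=E_1WE_2$; since $\sigma(P_1^{1-2t})\subset(0,\infty)$ and $\sigma(-P_2^{1-2t})\subset(-\infty,0)$ are disjoint, Rosenblum's theorem forces $W_{12}=0$, and letting $a,b\downarrow0$ gives $QWQ=0$, hence $PWP=0$. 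As written, your measure-theoretic argument is valid only when $\dim\mathcal{H}<\infty$ (atomic spectral measure), whereas the theorem is stated for a general separable $\mathcal{H}$.
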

		
		\begin{proof}
			The equivalence $(i)\Leftrightarrow(ii)\Leftrightarrow(iii)$ follows directly from \cite[Theorem 2.7]{Stanković_2024}. The implications $(ii)\Rightarrow(iv)\Rightarrow(v)$ are also obvious. Now assume that $\widehat{{\bf T}}(t)=\mathbf{0}$ for some $t\in (0,1)$. By \eqref{eq:ineq_td_sum}, we have that
			\begin{equation*}
				\|\widetilde{{\bf T}}\|\leq \|\widehat{{\bf T}}(t)\|=0,
			\end{equation*}
			i.e., $\widetilde{{\bf T}}=0$, showing that $(iii)$ holds. 
		\end{proof}
		
		\begin{remark}
			Note that the case $t=0$ in the previous theorem must be considered separately. In fact, it was shown in \cite[Corollary 2.3]{Stankovic24a} that
			\begin{equation*}
				\mathbf{T}=\mathbf{0}\quad\Longleftrightarrow\quad \widehat{{\bf T}}=\mathbf{0}. 
			\end{equation*}  
			We also mention that $\lambda=0$ in \cite[Proposition 3.2]{ACFS24a} should be excluded. 
		\end{remark}

		\bigskip 
		
		\section{Spherical Schatten $p$-norm inequalities}

		Let ${\bf T}=(T_1, \dots, T_d)\in \mathfrak{B}(\mathcal{H})^d$ and $1\leq p<\infty$. By treating $\mathbf{T}$ as an operator column given by \eqref{eq:operator_column},
		we can naturally say that the operator $d$-tuple $\mathbf{T}$ belongs to the Schatten $p$-class if $\begin{pmatrix}
			T_1\\\vdots\\T_d
		\end{pmatrix}\in\mathfrak{C}_p(\mathcal{H},\mathcal{H}^d)$, i.e., if
		\begin{equation*}
			\mathrm{tr}\,(P^p)<\infty,
		\end{equation*}
		where $P=\sqrt{T_1^*T_1+\cdots+T_d^*T_d}$. We shall simply write $\mathbf{T}\in \mathfrak{C}_p^d(\mathcal{H})$.
		
		\begin{definition}
			Let $\mathbf{T}\in \mathfrak{C}_p^d(\mathcal{H})$. The (spherical) Schatten $p$-norm of $\mathbf{T}$, $\norm{\mathbf{T}}_{s,p}$, is defined as
			\begin{equation*}
				\norm{\mathbf{T}}_{s,p}:=\left[\mathrm{tr}\,(P^p)\right]^\frac{1}{p}.
			\end{equation*}
		\end{definition}

		The following elementary lemma provides another way of looking at the quantity $\norm{\mathbf{T}}_{s,p}$.
		\begin{lemma}\label{lem:p_norm_equiv}
			Let ${\bf T}=(T_1, \dots, T_d)\in  \mathfrak{C}_p^d(\mathcal{H})$ and $1\leq p<\infty$. Then $\mathbb{T}\in  \mathfrak{C}_p(\mathcal{H}^d)$ and
			\begin{equation*} 
				\norm{\mathbf{T}}_{s,p}=\norm{P}_p=\norm{\mathbb{T}}_p,
			\end{equation*}
			where $\mathbb{T}$ is given by \eqref{eq:op_matrix_def}. 
		\end{lemma}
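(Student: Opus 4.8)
The plan is to reduce the whole statement to a single block computation of $|\mathbb{T}|$, exploiting that $\mathbb{T}$ has only one nonzero block-column. First I would note that the equality $\norm{\mathbf{T}}_{s,p}=\norm{P}_p$ is immediate from the definitions: since $P\geq 0$ we have $|P|=P$, so $\norm{P}_p=[\mathrm{tr}\,(|P|^p)]^{1/p}=[\mathrm{tr}\,(P^p)]^{1/p}=\norm{\mathbf{T}}_{s,p}$. Thus the real content of the lemma is the identity $\norm{P}_p=\norm{\mathbb{T}}_p$ together with the membership assertion $\mathbb{T}\in\mathfrak{C}_p(\mathcal{H}^d)$.

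The main step is to compute the modulus $|\mathbb{T}|=(\mathbb{T}^*\mathbb{T})^{1/2}$. Writing out the block product and using that $\mathbb{T}$ carries $T_1,\ldots,T_d$ in its first column and zeros elsewhere, only the $(1,1)$-block of $\mathbb{T}^*\mathbb{T}$ survives, and it equals $\sum_{k=1}^d T_k^*T_k=P^2$. Hence $\mathbb{T}^*\mathbb{T}$ is the block-diagonal operator on $\mathcal{H}^d$ with $P^2$ in the top-left corner and $0$ in every remaining diagonal slot. Because the positive square root acts entrywise on an orthogonal direct sum of positive operators, this gives $|\mathbb{T}|=P\oplus 0\oplus\cdots\oplus 0$, and consequently $|\mathbb{T}|^p=P^p\oplus 0\oplus\cdots\oplus 0$ for every $p\geq 1$.

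From here the conclusion follows at once. Taking traces block-wise yields $\mathrm{tr}\,(|\mathbb{T}|^p)=\mathrm{tr}\,(P^p)$, so $\mathbb{T}\in\mathfrak{C}_p(\mathcal{H}^d)$ exactly when $P\in\mathfrak{C}_p(\mathcal{H})$, i.e. when $\mathbf{T}\in\mathfrak{C}_p^d(\mathcal{H})$; and in that case $\norm{\mathbb{T}}_p=[\mathrm{tr}\,(|\mathbb{T}|^p)]^{1/p}=[\mathrm{tr}\,(P^p)]^{1/p}=\norm{P}_p=\norm{\mathbf{T}}_{s,p}$. Equivalently, one could phrase this through singular values: the nonzero singular values of $\mathbb{T}$ are precisely the eigenvalues of $P$, the vanishing columns contributing only zeros, and since every Schatten $p$-norm depends solely on the singular value sequence the identity is forced. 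There is no genuine obstacle here; the only point meriting a word of care is the justification that the square root and the $p$-th power of the positive block-diagonal operator $\mathbb{T}^*\mathbb{T}$ are computed block-wise, which is a standard property of the continuous functional calculus applied to an orthogonal direct sum.
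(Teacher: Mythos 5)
Your proposal is correct and follows essentially the same route as the paper's proof: both reduce the claim to the block computation $|\mathbb{T}| = P \oplus 0 \oplus \cdots \oplus 0$, from which $\mathrm{tr}\,(|\mathbb{T}|^p) = \mathrm{tr}\,(P^p)$ and all three equalities follow. You merely spell out the steps the paper leaves implicit (the product $\mathbb{T}^*\mathbb{T}$, the block-wise functional calculus, and the observation that $\norm{P}_p = \norm{\mathbf{T}}_{s,p}$ is definitional), which is fine.
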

		
		\begin{proof}
			By direct computation, we have
			\begin{align*}
				\norm{\mathbb{T}}_p&=\norm{\begin{bmatrix}
						T_1&0&\cdots&0\\
						\vdots&\vdots&\phantom{asd}&\vdots\\
						T_d&0&\cdots&0
				\end{bmatrix}}_p\\&=\left[\mathrm{tr}\,\left(\begin{bmatrix}
					\sqrt{T_1^*T_1+\cdots+T_d^*T_d}&&&\\
					&0&&\\&&\ddots&\\&&&0
				\end{bmatrix}^p\right)\right]^\frac{1}{p}\\
				&=\left[\mathrm{tr}\,(P^p)\right]^\frac{1}{p}=\norm{\mathbf{T}}_p.
			\end{align*}
			The first equality is obvious.
		\end{proof}

		We will also need the following simple result.
		\begin{lemma}\label{lem:p_direct_sum}
			Let $1\leq p<\infty$ and $T_1,\ldots,T_d\in \mathfrak{C}_p(\mathcal{H})$. Then
			\begin{equation*}
				\norm{T_1\oplus\cdots \oplus T_d}_p=\left(\sum_{i=1}^d\norm{T_i}_p^p\right)^\frac{1}{p}
			\end{equation*}
		\end{lemma}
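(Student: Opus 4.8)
The plan is to exploit the block-diagonal structure of the direct sum and the fact that both the continuous functional calculus and the trace respect orthogonal direct sums. Write $S=T_1\oplus\cdots\oplus T_d$ acting on $\mathcal{H}^d=\oplus_{i=1}^d\mathcal{H}$. First I would observe that the adjoint is computed blockwise, $S^*=T_1^*\oplus\cdots\oplus T_d^*$, so that
\begin{equation*}
	S^*S=(T_1^*T_1)\oplus\cdots\oplus(T_d^*T_d).
\end{equation*}
Since each summand $T_i^*T_i$ is a positive operator on the $i$-th copy of $\mathcal{H}$, and the positive square root commutes with orthogonal direct sums (a block-diagonal positive operator has block-diagonal square root, as one sees by applying the functional calculus summand by summand, or by verifying that $|T_1|\oplus\cdots\oplus|T_d|$ is positive and squares to $S^*S$), it follows that
\begin{equation*}
	\aps{S}=(S^*S)^{1/2}=\aps{T_1}\oplus\cdots\oplus\aps{T_d}.
\end{equation*}

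Next I would raise this to the $p$-th power. By the same functional-calculus argument applied to the continuous function $x\mapsto x^p$ on $[0,\infty)$, taking powers also respects the direct sum, giving
\begin{equation*}
	\aps{S}^p=\aps{T_1}^p\oplus\cdots\oplus\aps{T_d}^p.
\end{equation*}
Each $\aps{T_i}^p$ is a positive trace-class operator on the $i$-th copy of $\mathcal{H}$, precisely because $T_i\in\mathfrak{C}_p(\mathcal{H})$ means $\mathrm{tr}\,(\aps{T_i}^p)=\norm{T_i}_p^p<\infty$. Then I would invoke additivity of the trace over orthogonal direct sums: choosing an orthonormal basis of $\mathcal{H}^d$ obtained by concatenating orthonormal bases of the summands, the trace of the block-diagonal operator $\aps{S}^p$ is the sum of the traces of its diagonal blocks, so
\begin{equation*}
	\mathrm{tr}\,(\aps{S}^p)=\sum_{i=1}^d\mathrm{tr}\,(\aps{T_i}^p)=\sum_{i=1}^d\norm{T_i}_p^p<\infty.
\end{equation*}
In particular $S\in\mathfrak{C}_p(\mathcal{H}^d)$, and taking $p$-th roots of both sides yields the claimed identity $\norm{T_1\oplus\cdots\oplus T_d}_p=\bigl(\sum_{i=1}^d\norm{T_i}_p^p\bigr)^{1/p}$.

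There is no real obstacle here; the only point requiring a word of care is the interchange of the functional calculus (square root and $p$-th power) with the direct-sum decomposition, and the additivity of the trace across summands. Both are standard and follow immediately from the diagonal block structure, so the argument is essentially a direct computation once the blockwise formula for $\aps{S}$ is established.
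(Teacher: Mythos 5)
Your proof is correct and follows essentially the same route as the paper's: the paper's argument is exactly the direct computation $\norm{T_1\oplus\cdots\oplus T_d}_p=\bigl[\mathrm{tr}\,(|T_1|^p\oplus\cdots\oplus|T_d|^p)\bigr]^{1/p}=\bigl(\sum_{i=1}^d\norm{T_i}_p^p\bigr)^{1/p}$, silently using the same facts you spell out (the modulus and $p$-th power of a block-diagonal operator are computed blockwise, and the trace is additive across the blocks). You have merely made explicit the functional-calculus steps that the paper leaves implicit.
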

		
		\begin{proof}
			By the definition of $p$-norm, 
			\begin{align*}
				\norm{T_1\oplus\cdots \oplus T_d}_p&=\norm{\begin{bmatrix}
						T_1&&\\
						&\ddots&\\
						&&T_d
				\end{bmatrix}}_p=\left[\mathrm{tr}\,\left(\begin{bmatrix}
					|T_1|^p&&\\
					&\ddots&\\
					&&|T_d|^p
				\end{bmatrix}\right)\right]^\frac{1}{p}\\
				&=\left(\sum_{i=1}^d\mathrm{tr}\,(|T_i|^p)\right)^\frac{1}{p}=\left(\sum_{i=1}^d\norm{T_i}_p^p\right)^\frac{1}{p}.
			\end{align*}
		\end{proof}

		Note that if $\mathbf{T}\in \mathfrak{C}_p^d(\mathcal{H})$, then in particular, the operator column given in \eqref{eq:operator_column} is compact, and thus $T_i$ is compact for each $i\in\{1,\ldots,d\}$. A natural question arises whether each coordinate operator belongs to $\mathfrak{C}_p(\mathcal{H})$. The following theorem provides an affirmative answer to this question.

		\begin{theorem}\label{thm:coordinate_p_1_2}
			Let $1\leq p<\infty$ and ${\bf T}=(T_1, \dots, T_d)=(V_1P,\ldots, V_dP)\in \mathfrak{C}^d_p(\mathcal{H})$. Then $T_i\in \mathfrak{C}_p(\mathcal{H})$ and 
			$$\norm{T_i}_p\leq \norm{\mathbf{T}}_{s,p}$$
			for each $i\in\{1,\ldots,d\}$. 
		\end{theorem}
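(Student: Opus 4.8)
The plan is to exploit the spherical polar decomposition $\mathbf{T}=\mathbf{V}P$, which gives $T_i=V_iP$ for each $i$, together with the ideal property of the Schatten classes. Since $\|\mathbf{T}\|_{s,p}^p=\mathrm{tr}\,(P^p)<\infty$ by hypothesis, the positive operator $P=\sqrt{T_1^*T_1+\cdots+T_d^*T_d}$ already lies in $\mathfrak{C}_p(\mathcal{H})$ with $\norm{P}_p=\norm{\mathbf{T}}_{s,p}$. The whole statement then reduces to controlling the Schatten norm of the product $V_iP$.

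The key observation is that each $V_i$ is a contraction. Indeed, by the construction of the canonical spherical polar decomposition, $\mathbf{V}$ is a spherical partial isometry, so $V_1^*V_1+\cdots+V_d^*V_d$ is an orthogonal projection and hence $\leq I$. Since each summand $V_i^*V_i$ is positive, we get $V_i^*V_i\leq \sum_{j=1}^d V_j^*V_j\leq I$, whence $\norm{V_i}^2=\norm{V_i^*V_i}\leq 1$. I would record this contractivity as the crux of the argument.

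With $\norm{V_i}\leq 1$ in hand, I would invoke the standard ideal property of $\mathfrak{C}_p(\mathcal{H})$: for $S\in\mathfrak{C}_p(\mathcal{H})$ and bounded $A$, the product $AS$ lies in $\mathfrak{C}_p(\mathcal{H})$ and $\norm{AS}_p\leq \norm{A}\,\norm{S}_p$ (equivalently, the singular values obey $s_j(AS)\leq \norm{A}\,s_j(S)$). Applying this with $A=V_i$ and $S=P$ yields $T_i=V_iP\in\mathfrak{C}_p(\mathcal{H})$ and
\[
\norm{T_i}_p=\norm{V_iP}_p\leq \norm{V_i}\,\norm{P}_p\leq \norm{P}_p=\norm{\mathbf{T}}_{s,p},
\]
which is exactly the asserted inequality.

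There is no serious obstacle here; the argument is short once contractivity of the $V_i$ is noticed. The only point requiring a little care is that the relevant bound is $V_i^*V_i\leq I$ rather than any bound on $V_iV_i^*$, so one must use the characterization of $\mathbf{V}$ through the initial-space projection $\sum_j V_j^*V_j$ rather than a property of the final space. If one prefers to avoid quoting the ideal property, the same conclusion follows directly from the singular-value monotonicity $s_j(V_iP)\leq \norm{V_i}\,s_j(P)\leq s_j(P)$ and summing the $p$-th powers.
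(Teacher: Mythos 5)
Your proof is correct and follows essentially the same route as the paper: both establish $\norm{V_i}\leq 1$ from $V_i^*V_i\leq \sum_{j=1}^d V_j^*V_j\leq I$ and then apply the Schatten ideal property $\norm{V_iP}_p\leq\norm{V_i}\norm{P}_p$, identifying $\norm{P}_p$ with $\norm{\mathbf{T}}_{s,p}$ (the paper cites its Lemma \ref{lem:p_norm_equiv} for this last step, while you read it off the definition, which is immediate since $P\geq 0$).
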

		
		\begin{proof}
			Let $i\in\{1,\ldots,d\}$ be arbitrary.  Since $V_i^*V_i\leq\sum_{i=1}^dV_i^*V_i\leq I$, we have that $\norm{V_i}\leq 1$, and thus
			\begin{equation*}
				\norm{T_i}_p=\norm{V_iP}_p\leq \norm{V_i}\norm{P}_p\leq \norm{P}_p.
			\end{equation*}
			The conclusion now follows from Lemma \ref{lem:p_norm_equiv}.
		\end{proof}

		\begin{theorem}\label{thm:m_lambda_p_ineq}
			Let $1\leq p<\infty$ and $\mathbf{T}=\mathbf{V}P=(V_1P,\ldots, V_dP)\in\mathfrak{C}^d_p(\mathcal{H})$. Then $\mathbf{M}_\lambda(\mathbf{T})\in\mathfrak{C}^d_p(\mathcal{H})$ for each $0\leq \lambda\leq 1$, and 
			\begin{equation}\label{eq:p_norm_main_ineq_1}
				\|\mathbf{M}_\lambda(\mathbf{T})\|_{s,p}\leq (\lambda +(1-\lambda)\sqrt[p]{d})\norm{\mathbf{T}}_{s,p}.
			\end{equation}
			In particular,
			\begin{equation}\label{eq:mean_p_ineq}
				\| \mathbf{T}^D\|_{s,p}\leq   \sqrt[p]{d} \norm{\mathbf{T}}_{s,p}\quad\text{ and }\quad 	\|\widehat{\mathbf{T}}\|_{s,p}\leq  \frac{1+\sqrt[p]{d}}{2}\norm{\mathbf{T}}_{s,p}.
			\end{equation}
		\end{theorem}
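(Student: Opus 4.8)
The plan is to transport the entire inequality to the level of the single block operator matrices on $\mathcal{H}^d$ supplied by Lemma \ref{lem:p_norm_equiv}, where the Schatten $p$-norm is a genuine norm (since $p\ge 1$) and is submultiplicative against the operator norm. Writing $\mathbf{T}=\mathbf{V}P$ and letting $\mathbb{V},\mathbb{P}$ be the block matrices of Theorem \ref{thm:spher_quasi_matrix}, one checks by direct multiplication that $\mathbb{T}=\mathbb{V}\mathbb{P}$ and $\mathbb{T}^D=\mathbb{P}\mathbb{V}$, where $\mathbb{T}^D$ denotes the block matrix whose only nonzero column is $(PV_1,\dots,PV_d)$. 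Since $\mathbf{M}_\lambda(\mathbf{T})=\lambda\mathbf{T}+(1-\lambda)\mathbf{T}^D$ holds coordinatewise, the block matrix attached to $\mathbf{M}_\lambda(\mathbf{T})$ is exactly $\lambda\mathbb{T}+(1-\lambda)\mathbb{T}^D$, so Lemma \ref{lem:p_norm_equiv} gives $\|\mathbf{M}_\lambda(\mathbf{T})\|_{s,p}=\|\lambda\mathbb{T}+(1-\lambda)\mathbb{T}^D\|_p$.

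For the core estimate I would first apply the triangle inequality to obtain $\|\lambda\mathbb{T}+(1-\lambda)\mathbb{T}^D\|_p\le \lambda\|\mathbb{T}\|_p+(1-\lambda)\|\mathbb{T}^D\|_p$. The first term equals $\lambda\|\mathbf{T}\|_{s,p}$ by Lemma \ref{lem:p_norm_equiv}. For the second, the factorization $\mathbb{T}^D=\mathbb{P}\mathbb{V}$ together with submultiplicativity of the Schatten norm against the operator norm yields $\|\mathbb{T}^D\|_p\le \|\mathbb{P}\|_p\,\|\mathbb{V}\|$. Here $\|\mathbb{V}\|\le 1$, because $\mathbb{V}^*\mathbb{V}$ is the projection $\sum_{i=1}^d V_i^*V_i$ (the same observation used in the proof of Theorem \ref{thm:coordinate_p_1_2}), while $\mathbb{P}=P\oplus\cdots\oplus P$ ($d$ copies), so Lemma \ref{lem:p_direct_sum} gives $\|\mathbb{P}\|_p=(d\,\|P\|_p^p)^{1/p}=\sqrt[p]{d}\,\|\mathbf{T}\|_{s,p}$. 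Combining these yields $\|\mathbb{T}^D\|_p\le \sqrt[p]{d}\,\|\mathbf{T}\|_{s,p}$, which is already the first inequality in \eqref{eq:mean_p_ineq}, and substituting back produces \eqref{eq:p_norm_main_ineq_1}.

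Membership of $\mathbf{M}_\lambda(\mathbf{T})$ in $\mathfrak{C}_p^d(\mathcal{H})$ is then immediate from the same factorizations: since $P\in\mathfrak{C}_p(\mathcal{H})$, the diagonal $\mathbb{P}$ lies in $\mathfrak{C}_p(\mathcal{H}^d)$, and because $\mathfrak{C}_p$ is a two-sided ideal, both $\mathbb{V}\mathbb{P}$ and $\mathbb{P}\mathbb{V}$, hence the combination $\lambda\mathbb{T}+(1-\lambda)\mathbb{T}^D$, belong to $\mathfrak{C}_p(\mathcal{H}^d)$; Lemma \ref{lem:p_norm_equiv} then places $\mathbf{M}_\lambda(\mathbf{T})$ in $\mathfrak{C}_p^d(\mathcal{H})$. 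The two special cases in \eqref{eq:mean_p_ineq} follow by taking $\lambda=0$ (so $\mathbf{M}_0(\mathbf{T})=\mathbf{T}^D$ and the constant is $\sqrt[p]{d}$) and $\lambda=\tfrac12$ (so $\mathbf{M}_{1/2}(\mathbf{T})=\widehat{\mathbf{T}}$ and the constant is $\tfrac{1+\sqrt[p]{d}}{2}$). I expect the only genuinely delicate point to be the bookkeeping that produces the exponent $1/p$: the factor $\sqrt[p]{d}$ arises solely from the $d$-fold diagonal $\mathbb{P}$ via Lemma \ref{lem:p_direct_sum}, and it is essential to pull $\mathbb{V}$ out in operator norm, where it costs nothing as a partial isometry, rather than in the $p$-norm.
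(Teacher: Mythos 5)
Your proposal is correct and follows essentially the same route as the paper's proof: both pass to the block matrices $\mathbb{T}=\mathbb{V}\mathbb{P}$ and $\mathbb{P}\mathbb{V}$, apply the triangle inequality together with $\norm{XS}_p\leq\norm{X}_p\norm{S}$, bound $\norm{\mathbb{V}}\leq 1$, and compute $\norm{\mathbb{P}}_p=\sqrt[p]{d}\,\norm{\mathbf{T}}_{s,p}$ via Lemma \ref{lem:p_direct_sum}. Your only additions are cosmetic: you make the Schatten-class membership explicit via the ideal property (which the paper leaves implicit) and you extract the $\mathbf{T}^D$ bound as an intermediate step rather than as the case $\lambda=0$.
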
				
		\begin{proof}
			Using the triangle inequality for the Schatten $p$-norm, and the fact that $\norm{TS}_p\leq \norm{T}_p\norm{S}$,  we have
			\begin{align*}
				\|\mathbf{M}_\lambda(\mathbf{T})\|_{s,p}&=\norm{\lambda \mathbf{T}+(1-\lambda)\mathbf{T}^D}_{s,p}\\
				&= \norm{\lambda\mathbb{V}\mathbb{P}+(1-\lambda)\mathbb{P}\mathbb{V}}_p\\
				&\leq \lambda\norm{\mathbb{V}\mathbb{P}}_p+(1-\lambda)\norm{\mathbb{P}\mathbb{V}}_p\\
				&\leq \lambda\norm{\mathbb{T}}_p+(1-\lambda)\norm{\mathbb{P}}_p\norm{\mathbb{V}}.
			\end{align*}
			Since $\sum_{i=1}^dV_i^*V_i$ is the orthogonal projection onto $\overline{\mathcal{R}(P)}$, Lemma \ref{lem:norm_P} and Lemma \ref{lem:matrix_norm} imply that
			\begin{equation*}
				\norm{\mathbb{V}}=\norm{\mathbf{V}}=\norm{\sum_{i=1}^dV_i^*V_i}^{\frac{1}{2}}\leq 1
			\end{equation*}
			Furthermore, by Lemma \ref{lem:p_norm_equiv} and Lemma \ref{lem:p_direct_sum}, we have that $\norm{\mathbb{T}}_p=\norm{\mathbf{T}}_{s,p}$, and 
			\begin{equation*}
				\norm{\mathbb{P}}_p=\norm{P\oplus\cdots \oplus P}_p=\left(\sum_{i=1}^d\norm{P}_p^p\right)^\frac{1}{p}=\sqrt[p]{d}\norm{P}_p=\sqrt[p]{d}\norm{\mathbf{T}}_{s,p}.
			\end{equation*}
			Therefore,
			\begin{align*}
				\|\mathbf{M}_\lambda(\mathbf{T})\|_{s,p}&\leq  \lambda\norm{\mathbb{T}}_p+(1-\lambda)\norm{\mathbb{P}}_p\norm{\mathbb{V}}\\&\leq \lambda\norm{\mathbf{T}}_{s,p}+(1-\lambda)\sqrt[p]{d}\norm{\mathbf{T}}_{s,p}\\&=(\lambda +(1-\lambda)\sqrt[p]{d})\norm{\mathbf{T}}_{s,p},
			\end{align*}
			showing that \eqref{eq:p_norm_main_ineq_1} holds. 
		\end{proof}

		In the case when $p=2$ and $\dim\,(\mathcal{H})<\infty$, we also have the following estimates.
		\begin{lemma}\label{lem:t_d_dim}
			Let $\mathbf{T}=\mathbf{V}P=(V_1P,\ldots, V_dP)$ be a $d$-tuple of $n\times n$ complex matrices. Then
			\begin{equation*}
				\norm{\mathbf{T}^D}_{s,2}\leq \sqrt{n}\norm{\mathbf{T}}_{s,2}.
			\end{equation*}
		\end{lemma}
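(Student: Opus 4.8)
The plan is to collapse the whole statement into a single trace inequality. Writing the spherical polar decomposition $\mathbf{T}=\mathbf{V}P$, so that $\mathbf{T}^D=(PV_1,\ldots,PV_d)$, I would first use the definition of the spherical Schatten $2$-norm together with the cyclicity of the trace to compute
\begin{align*}
\norm{\mathbf{T}^D}_{s,2}^2=\mathrm{tr}\Big(\sum_{i=1}^d (PV_i)^*(PV_i)\Big)=\sum_{i=1}^d\mathrm{tr}\big(V_i^*P^2V_i\big)=\mathrm{tr}\Big(P^2\sum_{i=1}^d V_iV_i^*\Big).
\end{align*}
Abbreviating $S:=\sum_{i=1}^d V_iV_i^*\geq 0$, the quantity to be estimated is $\mathrm{tr}(P^2S)$ with both $P^2$ and $S$ positive, while $\norm{\mathbf{T}}_{s,2}^2=\mathrm{tr}(P^2)$ by definition.

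The next step is the trace bound $\mathrm{tr}(P^2S)\leq \norm{S}\,\mathrm{tr}(P^2)$, which follows from $\mathrm{tr}(P^2S)=\mathrm{tr}(PSP)$ (cyclicity) together with $S\leq\norm{S}I$, whence $PSP\leq\norm{S}P^2$ after conjugation by $P$. This reduces the lemma entirely to the estimate $\norm{S}\leq n$.

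The crux --- and the only place where finite-dimensionality is essential --- is that for a positive matrix one has $\norm{S}\leq\mathrm{tr}(S)$, since the operator norm is the largest eigenvalue whereas the trace is the sum of all (nonnegative) eigenvalues. By cyclicity again, $\mathrm{tr}(S)=\sum_{i=1}^d\mathrm{tr}(V_iV_i^*)=\mathrm{tr}\big(\sum_{i=1}^d V_i^*V_i\big)$, and $\sum_{i=1}^d V_i^*V_i$ is exactly the orthogonal projection onto $\mathcal{N}(P)^\perp$; its trace equals its rank and is therefore at most $n$. Combining the displays gives $\norm{S}\leq\mathrm{tr}(S)\leq n$, and substituting back yields $\norm{\mathbf{T}^D}_{s,2}^2\leq n\,\mathrm{tr}(P^2)=n\,\norm{\mathbf{T}}_{s,2}^2$, which is the assertion after taking square roots.

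I expect no genuine obstacle here: the proof is short once one packages the coordinate contributions into the single positive matrix $S$. The only real idea is the estimate $\norm{S}\leq\mathrm{tr}(S)$, and it is worth noting that this is precisely the point at which the ambient dimension $n$ enters (in place of the number of coordinates $d$, which is what the general estimate of Theorem \ref{thm:m_lambda_p_ineq} would give); for $n<d$ the present bound is the sharper one.
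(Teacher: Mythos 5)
Your proof is correct and follows essentially the same route as the paper's: both reduce, via cyclicity of the trace, to bounding $\mathrm{tr}\left(P^2\sum_{i=1}^d V_iV_i^*\right)$ and then use $\mathrm{tr}\left(\sum_{i=1}^d V_i^*V_i\right)\leq n$. The only difference is that where the paper directly invokes the inequality $\mathrm{tr}(AB)\leq \mathrm{tr}(A)\,\mathrm{tr}(B)$ for positive semidefinite matrices, you derive the needed bound inline through $\mathrm{tr}(P^2S)\leq\norm{S}\,\mathrm{tr}(P^2)$ and $\norm{S}\leq\mathrm{tr}(S)$, which is a self-contained proof of the same step.
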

		
		\begin{proof}
			Since $\mathbf{T}^D=(PV_1,\ldots,PV_d)$, we have that $(\mathbf{T}^D)^*\mathbf{T}^D=\sum_{i=1}^dV_i^*P^2V_i$, and thus
			\begin{align*}
				\norm{\mathbf{T}^D}_{s,2}^2&=\mathrm{tr} \left(\sum_{i=1}^dV_i^*P^2V_i\right)=\sum_{i=1}^d\mathrm{tr} \left(V_i^*P^2V_i\right)\\
				&=\sum_{i=1}^d\mathrm{tr} \left(P^2V_iV_i^*\right)=\mathrm{tr} \left(P^2\sum_{i=1}^dV_iV_i^*\right)
			\end{align*}
			Since $\mathrm{tr} (AB)\leq \mathrm{tr}(A)\mathrm{tr}\,(B)$ for positive semi-definite matrices $A$ and $B$, it follow that
			\begin{equation}\label{eq:td_ineq}
				\norm{\mathbf{T}^D}_{s,2}^2\leq \mathrm{tr} \left(P^2\right)\mathrm{tr}\,\left(\sum_{i=1}^dV_iV_i^*\right).
			\end{equation}
			Now, since $\sum_{i=1}^dV_i^*V_i\leq I$, using the monotonicity of the trace functional, we have
			\begin{align*}
				\mathrm{tr} \left(\sum_{i=1}^dV_iV_i^*\right)&=\sum_{i=1}^d\mathrm{tr} \left(V_iV_i^*\right)=\sum_{i=1}^d\mathrm{tr} \left(V_i^*V_i\right)\\
				&=\mathrm{tr} \left(\sum_{i=1}^dV_i^*V_i\right)\leq  \mathrm{tr} \left(I\right)=n.
			\end{align*}
			Furthermore, since $\norm{\mathbf{T}}_{s,2}^2=\mathrm{tr}\left(P^2\right)$, \eqref{eq:td_ineq} implies that
			\begin{equation*}
				\norm{\mathbf{T}^D}_{s,2}^2\leq n\norm{\mathbf{T}}_{s,2}^2,
			\end{equation*}
			and thus,
			\begin{equation*}
				\norm{\mathbf{T}^D}_{s,2}\leq \sqrt{n}\norm{\mathbf{T}}_{s,2}.
			\end{equation*}
		\end{proof}
		
		\begin{theorem}
			Let $\mathbf{T}=(T_1,\ldots,T_d)$ be a $d$-tuple of $n\times n$ complex matrices. Then
			\begin{equation*}
				\|\mathbf{M}_\lambda(\mathbf{T})\|_{s,2}\leq  (\lambda +(1-\lambda)\sqrt{\min\{n,d\}})\norm{\mathbf{T}}_{s,2}.
			\end{equation*}
			In particular,
			\begin{equation*}
				\|\widehat{\mathbf{T}}\|_{s,2}\leq  \frac{1+\sqrt{\min\{n,d\}}}{2}\norm{\mathbf{T}}_{s,2}.
			\end{equation*}
		\end{theorem}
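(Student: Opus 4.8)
The plan is to combine the triangle inequality for the Schatten $2$-norm with the two separate bounds on $\norm{\mathbf{T}^D}_{s,2}$ that are already available, namely the dimension-$d$ bound coming from Theorem \ref{thm:m_lambda_p_ineq} and the dimension-$n$ bound coming from Lemma \ref{lem:t_d_dim}. Since $\mathbf{M}_\lambda(\mathbf{T})=\lambda\mathbf{T}+(1-\lambda)\mathbf{T}^D$, the first step is to apply the triangle inequality to obtain
\begin{equation*}
\|\mathbf{M}_\lambda(\mathbf{T})\|_{s,2}\leq \lambda\norm{\mathbf{T}}_{s,2}+(1-\lambda)\norm{\mathbf{T}^D}_{s,2},
\end{equation*}
so that everything reduces to estimating $\norm{\mathbf{T}^D}_{s,2}$.

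The key step is to note that we have two bounds on this quantity at our disposal. Specializing the estimate $\| \mathbf{T}^D\|_{s,p}\leq \sqrt[p]{d}\,\norm{\mathbf{T}}_{s,p}$ from \eqref{eq:mean_p_ineq} to $p=2$ gives $\norm{\mathbf{T}^D}_{s,2}\leq \sqrt{d}\,\norm{\mathbf{T}}_{s,2}$, while Lemma \ref{lem:t_d_dim} gives $\norm{\mathbf{T}^D}_{s,2}\leq \sqrt{n}\,\norm{\mathbf{T}}_{s,2}$. Taking the smaller of the two coefficients yields
\begin{equation*}
\norm{\mathbf{T}^D}_{s,2}\leq \sqrt{\min\{n,d\}}\,\norm{\mathbf{T}}_{s,2}.
\end{equation*}
Substituting this into the first display then produces
\begin{equation*}
\|\mathbf{M}_\lambda(\mathbf{T})\|_{s,2}\leq \big(\lambda+(1-\lambda)\sqrt{\min\{n,d\}}\big)\norm{\mathbf{T}}_{s,2},
\end{equation*}
which is precisely the claimed inequality; the particular case follows immediately upon setting $\lambda=\tfrac{1}{2}$, since then $\widehat{\mathbf{T}}=\mathbf{M}_{1/2}(\mathbf{T})$ and the coefficient becomes $\tfrac{1+\sqrt{\min\{n,d\}}}{2}$.

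I do not anticipate any genuine obstacle here, as both auxiliary bounds on $\norm{\mathbf{T}^D}_{s,2}$ are established earlier in the section; the content of the theorem is essentially the observation that the sharper of the $\sqrt{d}$ and $\sqrt{n}$ estimates can always be used. The only point requiring minor care is that the coefficient of the $\mathbf{T}$ term in the triangle inequality is controlled directly by $\norm{\mathbf{T}}_{s,2}$ with constant $1$ (not by $\sqrt{\min\{n,d\}}$), so that only the $\mathbf{T}^D$ term carries the dimensional factor; this is what makes the convex-combination form of the constant $\lambda+(1-\lambda)\sqrt{\min\{n,d\}}$ come out correctly.
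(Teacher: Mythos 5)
Your proposal is correct and follows essentially the same route as the paper: the triangle inequality for $\|\cdot\|_{s,2}$, the $\sqrt{n}$ bound on $\norm{\mathbf{T}^D}_{s,2}$ from Lemma \ref{lem:t_d_dim}, and the $\sqrt{d}$ bound coming from Theorem \ref{thm:m_lambda_p_ineq}. The only (immaterial) difference is that you take the minimum at the level of $\norm{\mathbf{T}^D}_{s,2}$ before applying the triangle inequality, whereas the paper derives the two bounds on $\|\mathbf{M}_\lambda(\mathbf{T})\|_{s,2}$ separately and then combines them.
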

		
		\begin{proof}
			It follows from Theorem \ref{lem:t_d_dim} that
			\begin{align*}
				\|\mathbf{M}_\lambda(\mathbf{T})\|_{s,2}&\leq \norm{\lambda \mathbf{T}+(1-\lambda)\mathbf{T}^D}_{s,2}\\&\leq \lambda \norm{\mathbf{T}}_{s,2}+(1-\lambda)\norm{\mathbf{T}^D}_{s,2} \\&\leq (\lambda +(1-\lambda)\sqrt{n})\norm{\mathbf{T}}_{s,2}.
			\end{align*}
			The conclusion now follows by combining the previous inequality with \eqref{eq:p_norm_main_ineq_1}.
		\end{proof}
		
		\begin{theorem}\label{thm:r0_p_ineq}
			Let $1\leq p<\infty$ and ${\bf T}=(T_1, \dots, T_d)\in \mathfrak{C}_p^d(\mathcal{H})$.  Then $\widehat{{\bf T}}(t)\in \mathfrak{C}^d_p(\mathcal{H})$ for each $0\leq t\leq 1$, and 
			\begin{align}\label{eq:r0_p_ineq}
				\|\widehat{{\bf T}}(t)\|_{s,p}\leq 2r_0\|\widetilde{{\bf T}}\|_{s,p}+(1-2r_0)\|\widehat{{\bf T}}\|_{s,p}\leq \|\widehat{{\bf T}}\|_{s,p}\leq\frac{1+\sqrt[p]{d}}{2}\norm{\mathbf{T}}_{s,p},
			\end{align} 
			where $r_0=\min\{t, 1-t\}$.
			
			In particular,        $$\|\widetilde{{\bf T}}\|_{s,p}\leq\|\widehat{{\bf T}}\|_{s,p}\leq\frac{1+\sqrt[p]{d}}{2} \|{\bf T}\|_{s,p}.$$
			
		\end{theorem}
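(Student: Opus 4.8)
The plan is to mirror the argument of Theorem~\ref{thm:r0_norm_ineq}, but to read everything through the Schatten $p$-norm via Lemma~\ref{lem:p_norm_equiv} and to exploit that $\norm{\cdot}_p$ is unitarily invariant, so that both the Heinz inequality \eqref{Ineq_Heinz_0} and its refinement \eqref{Ref_Ineq_Heinz} are at our disposal. Write ${\bf T}={\bf V}P$ for the spherical polar decomposition and let $\mathbb{V},\mathbb{P}$ be the operator matrices on $\mathcal{H}^d$ from Theorem~\ref{thm:spher_quasi_matrix}. Identifying each tuple with its associated first-column matrix and invoking Lemma~\ref{lem:p_norm_equiv}, the three identities I would record at the outset are
$$\|\widehat{{\bf T}}(t)\|_{s,p}=\tfrac12\bigl\|\mathbb{P}^t\mathbb{V}\mathbb{P}^{1-t}+\mathbb{P}^{1-t}\mathbb{V}\mathbb{P}^t\bigr\|_p,\qquad \|\widetilde{{\bf T}}\|_{s,p}=\|\mathbb{P}^{1/2}\mathbb{V}\mathbb{P}^{1/2}\|_p,\qquad \|\widehat{{\bf T}}\|_{s,p}=\tfrac12\|\mathbb{P}\mathbb{V}+\mathbb{V}\mathbb{P}\|_p,$$
the first being the Schatten analogue of \eqref{Eq_00}.

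Before manipulating these, I would settle the membership claim $\widehat{{\bf T}}(t)\in\mathfrak{C}^d_p(\mathcal{H})$. For $t\in(0,1)$, Lemma~\ref{lem:p_norm_equiv} gives $\mathbb{P}\in\mathfrak{C}_p(\mathcal{H}^d)$, whence $\mathbb{P}^t\in\mathfrak{C}_{p/t}$ and $\mathbb{P}^{1-t}\in\mathfrak{C}_{p/(1-t)}$; since $\tfrac{t}{p}+\tfrac{1-t}{p}=\tfrac1p$ and $\mathbb{V}$ is bounded, Hölder's inequality for Schatten norms yields $\mathbb{P}^t\mathbb{V}\mathbb{P}^{1-t}\in\mathfrak{C}_p(\mathcal{H}^d)$, and similarly for the other summand, so the matrix associated to $\widehat{{\bf T}}(t)$ lies in $\mathfrak{C}_p(\mathcal{H}^d)$; by Lemma~\ref{lem:p_norm_equiv} this is precisely $\widehat{{\bf T}}(t)\in\mathfrak{C}^d_p(\mathcal{H})$. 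The endpoints $t\in\{0,1\}$ give $\widehat{{\bf T}}(t)=\widehat{{\bf T}}=\mathbf{M}_{1/2}(\mathbf{T})$, whose membership is already guaranteed by Theorem~\ref{thm:m_lambda_p_ineq}. The same Hölder reasoning confirms that $\mathbb{P}^{1/2}\mathbb{V}\mathbb{P}^{1/2}$ and $\mathbb{P}\mathbb{V}+\mathbb{V}\mathbb{P}$ are in $\mathfrak{C}_p(\mathcal{H}^d)$, so \eqref{Ref_Ineq_Heinz} may legitimately be applied in the $\norm{\cdot}_p$ ideal.

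The core estimate then comes from \eqref{Ref_Ineq_Heinz} with $A=B=\mathbb{P}$, $X=\mathbb{V}$, $\nu=t$, and the norm $\norm{\cdot}_p$, which gives
$$\bigl\|\mathbb{P}^t\mathbb{V}\mathbb{P}^{1-t}+\mathbb{P}^{1-t}\mathbb{V}\mathbb{P}^t\bigr\|_p\le 4r_0\|\mathbb{P}^{1/2}\mathbb{V}\mathbb{P}^{1/2}\|_p+(1-2r_0)\|\mathbb{P}\mathbb{V}+\mathbb{V}\mathbb{P}\|_p\le\|\mathbb{P}\mathbb{V}+\mathbb{V}\mathbb{P}\|_p.$$
Dividing by $2$ and translating back through the three identities converts this into $\|\widehat{{\bf T}}(t)\|_{s,p}\le 2r_0\|\widetilde{{\bf T}}\|_{s,p}+(1-2r_0)\|\widehat{{\bf T}}\|_{s,p}\le\|\widehat{{\bf T}}\|_{s,p}$. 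For the terminal bound in \eqref{eq:r0_p_ineq} I would simply invoke $\|\widehat{\mathbf{T}}\|_{s,p}\le\frac{1+\sqrt[p]{d}}{2}\norm{\mathbf{T}}_{s,p}$ from \eqref{eq:mean_p_ineq} (the case $\lambda=\tfrac12$ of Theorem~\ref{thm:m_lambda_p_ineq}). Finally, the ``in particular'' statement follows by reading off the middle inequality, which for $t\in(0,1)$ rearranges to $\|\widetilde{{\bf T}}\|_{s,p}\le\|\widehat{{\bf T}}\|_{s,p}$ (equivalently, one applies the first Heinz inequality in \eqref{Ineq_Heinz_0} to $A=B=\mathbb{P}$, $X=\mathbb{V}$), after which the chain closes.

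I expect no serious obstacle, since the argument is structurally identical to Theorem~\ref{thm:r0_norm_ineq}; the only genuinely new points are the Hölder verification that the associated matrices stay in $\mathfrak{C}_p$, and the observation that the terminal constant must be drawn from the $\lambda=\tfrac12$ case of Theorem~\ref{thm:m_lambda_p_ineq} rather than from a naive bound $\|\widehat{{\bf T}}\|_{s,p}\le\|{\bf T}\|_{s,p}$, which fails with constant $1$ in the $p$-norm setting whenever $d\ge 2$.
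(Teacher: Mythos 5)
Your proof is correct and takes essentially the same route as the paper: both reduce the claim via Lemma \ref{lem:p_norm_equiv} to the operator matrices $\mathbb{P}$ and $\mathbb{V}$, apply the refined Heinz inequality \eqref{Ref_Ineq_Heinz} (valid for every unitarily invariant norm, hence for $\|\cdot\|_p$) with $A=B=\mathbb{P}$, $X=\mathbb{V}$, $\nu=t$, and close the chain with the bound $\|\widehat{\mathbf{T}}\|_{s,p}\leq\frac{1+\sqrt[p]{d}}{2}\|\mathbf{T}\|_{s,p}$ from \eqref{eq:mean_p_ineq}. Your explicit H\"older-type verification that $\widehat{\mathbf{T}}(t)\in\mathfrak{C}_p^d(\mathcal{H})$ is a welcome extra detail that the paper leaves implicit, but it does not change the structure of the argument.
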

		\begin{proof}
			Let $t\in [0, 1]$ be arbitrary and ${\bf T}={\bf V}P=(V_1P, \dots , V_d P)$ be the spherical polar decomposition of ${\bf T}$.
			Using Lemma \ref{lem:p_norm_equiv}, we have that
			\begin{equation}\label{eq:transform_t_p}
				\begin{split}
					\|\widehat{{\bf T}}(t)\|_{s,p}&=\frac{1}{2}\left\|\begin{bmatrix}
						P^tV_1P^{1-t}+P^{1-t}V_1P^t & 0& \dots& 0\\
						\vdots & \vdots && \vdots\\
						P^tV_dP^{1-t}+P^{1-t}V_dP^t & 0& \dots& 0
					\end{bmatrix}\right\|_p \\
					&=\frac{1}{2}\|\mathbb{P}^t\mathbb{V}\mathbb{P}^{1-t}+\mathbb{P}^{1-t}\mathbb{V}\mathbb{P}^t\|_p.
				\end{split}
			\end{equation}
			Now,  as in the proof of Theorem \ref{thm:r0_norm_ineq} we conclude that the first two inequalities hold \eqref{eq:r0_p_ineq}, while the last one follows from \eqref{eq:mean_p_ineq}.
		\end{proof}
		
		In a similar fashion, we can prove the following $p$-norm analogue of Theorem \ref{thm:ineq_td_sum}.
		\begin{theorem}\label{thm:ineq_td_sum_p}
			Let $1\leq p<\infty$ and ${\bf T}=(T_1, \dots, T_d)\in \mathfrak{C}_p^d(\mathcal{H})$. Then
			\begin{align*}\label{eq:ineq_td_sum_p}
				\|\widetilde{{\bf T}}\|_{s,p}\leq \|\widehat{{\bf T}}(t)\|_{s,p}\leq \frac{1}{2}(\|{\bf T}^D\|_{s,p}^t\|{\bf T}\|_{s,p}^{1-t}+\|{\bf T}^D\|_{s,p}^{1-t}\|{\bf T}\|_{s,p}^t)\leq \frac{d^\frac{t}{p}+d^\frac{1-t}{p}}{2}\|{\bf T}\|_{s,p}
			\end{align*}
			for any $t\in [0, 1]$.
			
			In particular,         $$\|\widetilde{{\bf T}}\|_{s,p}\leq \|\widehat{{\bf T}}(t)\|_{s,p}\leq\frac{1}{2}( \|{\bf T}^D\|_{s,p}+\|{\bf T}\|_{s,p})\leq \frac{1+\sqrt[p]{d}}{2} \|{\bf T}\|_{s,p}.$$
		\end{theorem}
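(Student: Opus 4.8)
The plan is to mirror the proof of Theorem \ref{thm:ineq_td_sum} essentially verbatim, replacing the operator norm by the Schatten $p$-norm throughout. This transcription is legitimate because the Schatten $p$-norm is a unitarily invariant norm, so both Heinz inequalities \eqref{Ineq_Heinz_0} and \eqref{Heinz_ineq_0} remain available with $\vertiii{\cdot}$ specialized to $\|\cdot\|_p$, while the matrix dictionary of Lemma \ref{lem:p_norm_equiv} lets me translate every tuple norm into a single matrix $p$-norm. Fixing $t\in[0,1]$ and the spherical polar decomposition $\mathbf{T}=\mathbf{V}P$, I would first record from \eqref{eq:transform_t_p} that $\|\widehat{\mathbf{T}}(t)\|_{s,p}=\tfrac12\|\mathbb{P}^t\mathbb{V}\mathbb{P}^{1-t}+\mathbb{P}^{1-t}\mathbb{V}\mathbb{P}^t\|_p$, and note that membership of $\widehat{\mathbf{T}}(t)$, $\widetilde{\mathbf{T}}$ and $\mathbf{T}^D$ in $\mathfrak{C}_p^d(\mathcal{H})$ has already been secured by Theorems \ref{thm:r0_p_ineq} and \ref{thm:m_lambda_p_ineq}.

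For the leftmost inequality I would apply the first Heinz inequality in \eqref{Ineq_Heinz_0} with $A=B=\mathbb{P}$, $X=\mathbb{V}$ and $\nu=t$, which gives $\|\widetilde{\mathbf{T}}\|_{s,p}=\|\mathbb{P}^{1/2}\mathbb{V}\mathbb{P}^{1/2}\|_p\leq\tfrac12\|\mathbb{P}^t\mathbb{V}\mathbb{P}^{1-t}+\mathbb{P}^{1-t}\mathbb{V}\mathbb{P}^t\|_p=\|\widehat{\mathbf{T}}(t)\|_{s,p}$. For the middle inequality I would first use the triangle inequality and then the multiplicative Heinz inequality \eqref{Heinz_ineq_0}, again with $A=B=\mathbb{P}$ and $X=\mathbb{V}$, to obtain $\|\mathbb{P}^t\mathbb{V}\mathbb{P}^{1-t}\|_p\leq\|\mathbb{P}\mathbb{V}\|_p^{\,t}\|\mathbb{V}\mathbb{P}\|_p^{\,1-t}$ together with the companion bound in which $t$ is replaced by $1-t$. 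The crucial translation step, via Lemma \ref{lem:p_norm_equiv}, is that $\|\mathbb{P}\mathbb{V}\|_p=\|\mathbf{T}^D\|_{s,p}$ and $\|\mathbb{V}\mathbb{P}\|_p=\|\mathbb{T}\|_p=\|\mathbf{T}\|_{s,p}$, which produces exactly the middle expression $\tfrac12\big(\|\mathbf{T}^D\|_{s,p}^{\,t}\|\mathbf{T}\|_{s,p}^{\,1-t}+\|\mathbf{T}^D\|_{s,p}^{\,1-t}\|\mathbf{T}\|_{s,p}^{\,t}\big)$.

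The rightmost inequality is where the dimensional factor enters, and this is the one genuine departure from the operator-norm case: instead of the crude bound $\|\mathbf{T}^D\|\leq\|\mathbf{T}\|$, I would invoke the estimate $\|\mathbf{T}^D\|_{s,p}\leq\sqrt[p]{d}\,\|\mathbf{T}\|_{s,p}$ from \eqref{eq:mean_p_ineq}. Substituting this into each of the two mixed products yields $\|\mathbf{T}^D\|_{s,p}^{\,t}\|\mathbf{T}\|_{s,p}^{\,1-t}\leq d^{t/p}\|\mathbf{T}\|_{s,p}$ and likewise $d^{(1-t)/p}\|\mathbf{T}\|_{s,p}$, so the middle quantity is at most $\tfrac{d^{t/p}+d^{(1-t)/p}}{2}\|\mathbf{T}\|_{s,p}$, as claimed. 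Finally, for the ``in particular'' chain I would bound the mixed term uniformly in $t$ by applying weighted arithmetic--geometric means to $a=\|\mathbf{T}^D\|_{s,p}$ and $b=\|\mathbf{T}\|_{s,p}$, namely $a^t b^{1-t}+a^{1-t}b^t\leq a+b$, and then once more use $\|\mathbf{T}^D\|_{s,p}\leq\sqrt[p]{d}\,\|\mathbf{T}\|_{s,p}$. I expect no serious obstacle: the argument is a faithful copy of Theorem \ref{thm:ineq_td_sum}, and the only points demanding a little care are the correct bookkeeping of the factor $\sqrt[p]{d}$ (which replaces the trivial factor $1$ of the operator-norm case) and the verification that the block-matrix identities $\widetilde{\mathbb{T}}=\mathbb{P}^{1/2}\mathbb{V}\mathbb{P}^{1/2}$, $\mathbb{T}^D=\mathbb{P}\mathbb{V}$ and $\mathbb{T}=\mathbb{V}\mathbb{P}$ persist at the level of the $p$-norm.
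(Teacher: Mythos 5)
Your proposal is correct and matches the paper's intent exactly: the paper gives no separate proof of this theorem, stating only that it follows ``in a similar fashion'' from Theorem \ref{thm:ineq_td_sum}, and your transcription — the representation \eqref{eq:transform_t_p}, the Heinz inequalities \eqref{Ineq_Heinz_0} and \eqref{Heinz_ineq_0} for the (unitarily invariant) Schatten $p$-norm, the identifications $\|\mathbb{V}\mathbb{P}\|_p=\|\mathbf{T}\|_{s,p}$ and $\|\mathbb{P}\mathbb{V}\|_p=\|\mathbf{T}^D\|_{s,p}$ via Lemma \ref{lem:p_norm_equiv}, and the bound $\|\mathbf{T}^D\|_{s,p}\leq\sqrt[p]{d}\,\|\mathbf{T}\|_{s,p}$ from \eqref{eq:mean_p_ineq} in place of $\|\mathbf{T}^D\|\leq\|\mathbf{T}\|$ — is precisely that argument, correctly carried out.
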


		By combining Theorem \ref{thm:r0_p_ineq} and Theorem \ref{thm:ineq_td_sum_p}, we obtain the following corollary:
		\begin{corollary}
			Let $1\leq p<\infty$ and ${\bf T}=(T_1, \dots, T_d)\in \mathfrak{C}_p^d(\mathcal{H})$. Then
			\begin{equation}\label{eq:chain_ineq}
				\|\widetilde{{\bf T}}\|_{s,p}\leq \|\widehat{{\bf T}}(t)\|_{s,p}\leq \|\widehat{{\bf T}}\|_{s,p}\leq \frac{1+\sqrt[p]{d}}{2} \|{\bf T}\|_{s,p}.
			\end{equation}
			for any $t\in [0, 1]$.
		\end{corollary}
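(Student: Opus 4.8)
The plan is to obtain the chain of four quantities simply by concatenating the relevant pieces of the two preceding theorems, since every inequality appearing in the corollary has already been established (possibly after discarding a sharper intermediate bound) in either Theorem \ref{thm:r0_p_ineq} or Theorem \ref{thm:ineq_td_sum_p}.

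First I would invoke Theorem \ref{thm:ineq_td_sum_p} to secure the leftmost inequality $\|\widetilde{{\bf T}}\|_{s,p}\leq \|\widehat{{\bf T}}(t)\|_{s,p}$; this is precisely the first inequality in that theorem's chain and requires nothing further. Next I would turn to Theorem \ref{thm:r0_p_ineq}, whose chain reads $\|\widehat{{\bf T}}(t)\|_{s,p}\leq 2r_0\|\widetilde{{\bf T}}\|_{s,p}+(1-2r_0)\|\widehat{{\bf T}}\|_{s,p}\leq \|\widehat{{\bf T}}\|_{s,p}\leq\frac{1+\sqrt[p]{d}}{2}\norm{\mathbf{T}}_{s,p}$. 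Dropping the intermediate convex-combination term $2r_0\|\widetilde{{\bf T}}\|_{s,p}+(1-2r_0)\|\widehat{{\bf T}}\|_{s,p}$ yields at once both the middle inequality $\|\widehat{{\bf T}}(t)\|_{s,p}\leq \|\widehat{{\bf T}}\|_{s,p}$ and the rightmost inequality $\|\widehat{{\bf T}}\|_{s,p}\leq \frac{1+\sqrt[p]{d}}{2}\|{\bf T}\|_{s,p}$. Stringing these three facts together produces the asserted display, valid for every $t\in[0,1]$ because both source theorems hold for all such $t$.

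There is no genuine obstacle here: the corollary is a bookkeeping consequence of results already in hand. The only point deserving a moment's care is checking that the hypotheses align — both theorems assume $1\leq p<\infty$ and ${\bf T}\in\mathfrak{C}_p^d(\mathcal{H})$, which are exactly the standing assumptions of the corollary, and both guarantee that the transforms $\widetilde{{\bf T}}$, $\widehat{{\bf T}}(t)$, and $\widehat{{\bf T}}$ again belong to $\mathfrak{C}_p^d(\mathcal{H})$, so that all the Schatten $p$-norms appearing in \eqref{eq:chain_ineq} are well defined and the concatenation is legitimate.
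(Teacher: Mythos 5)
Your proposal is correct and is exactly the paper's argument: the paper states this corollary as an immediate consequence of combining Theorem \ref{thm:r0_p_ineq} (which supplies $\|\widehat{{\bf T}}(t)\|_{s,p}\leq \|\widehat{{\bf T}}\|_{s,p}\leq \frac{1+\sqrt[p]{d}}{2}\|{\bf T}\|_{s,p}$ after discarding the convex-combination term) with Theorem \ref{thm:ineq_td_sum_p} (which supplies $\|\widetilde{{\bf T}}\|_{s,p}\leq \|\widehat{{\bf T}}(t)\|_{s,p}$). Your extra check that the hypotheses and Schatten-class membership align is a reasonable bit of diligence but introduces nothing beyond the paper's one-line justification.
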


		\medskip

		Recall the following theorems from \cite{Kittaneh_IEOT_2010}, which consider the equality conditions in \eqref{Ineq_Heinz_0}.
		\begin{theorem}\cite[Theorem 5]{Kittaneh_IEOT_2010}\label{thm:equiv_cond_p}
			Let $A, B$, and $X$ be operators such that $A$ and $B$ are positive,
			and let $1 <p< \infty$. Then
			$$\|A^\nu X B^{1-\nu}+ A^{1-\nu} X B^\nu\|_p=\|AX+XB\|_p$$
			for some $\nu$ with $0<\nu <1$ if and only if $AX=XB$.
		\end{theorem}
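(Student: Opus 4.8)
The plan is to treat the two implications separately: the direction $AX=XB\Rightarrow$ equality by the functional calculus, and the substantive direction by an integral representation of the Heinz kernel together with the strict convexity of $\mathfrak{C}_p(\mathcal{H})$ for $1<p<\infty$. For the easy direction, I would first note that $AX=XB$ propagates through the functional calculus: induction gives $A^nX=XB^n$ for all $n$, hence $f(A)X=Xf(B)$ for every polynomial and, by uniform approximation on the spectra of the positive operators $A$ and $B$, for every continuous $f$. In particular $A^\nu X=XB^\nu$ and $A^{1-\nu}X=XB^{1-\nu}$, so that
\[
A^\nu XB^{1-\nu}+A^{1-\nu}XB^\nu=XB^\nu B^{1-\nu}+XB^{1-\nu}B^\nu=2XB=AX+XB .
\]
The two sides of the asserted identity are then literally the same operator, and equality of the $p$-norms is immediate.

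For the converse, the key device is an integral representation. Writing the scalar Heinz quotient in the variable $r=\log(a/b)$ gives
\[
\frac{a^\nu b^{1-\nu}+a^{1-\nu}b^\nu}{a+b}=\frac{\cosh\!\big((\nu-\tfrac12)r\big)}{\cosh(r/2)}=:\phi(r),
\]
and since $|\nu-\tfrac12|<\tfrac12$ for $0<\nu<1$, the even function $\phi$ is positive definite and is the Fourier transform $\phi(r)=\int_{\mathbb{R}}e^{ir\xi}\,dP(\xi)$ of a symmetric probability measure $P$ with full support. Substituting $r=\log a-\log b$ and using the spectral decompositions of $A$ and $B$ upgrades this to the operator identity
\[
A^\nu XB^{1-\nu}+A^{1-\nu}XB^\nu=\int_{\mathbb{R}}A^{i\xi}\,(AX+XB)\,B^{-i\xi}\,dP(\xi),
\]
verified entrywise in a basis diagonalizing $A$ and $B$ (and becoming a double operator integral in the general case). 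Since each $A^{i\xi},B^{i\xi}$ is unitary on the relevant reducing subspace and $P$ is a probability measure, the triangle inequality together with the unitary invariance of $\|\cdot\|_p$ recovers $\|A^\nu XB^{1-\nu}+A^{1-\nu}XB^\nu\|_p\le\|AX+XB\|_p$.

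The hard part is the equality analysis, and this is exactly where the hypothesis $1<p<\infty$ enters. Put $M=AX+XB$; equality forces equality in the integral triangle inequality
\[
\Big\|\int_{\mathbb{R}}A^{i\xi}MB^{-i\xi}\,dP(\xi)\Big\|_p=\int_{\mathbb{R}}\big\|A^{i\xi}MB^{-i\xi}\big\|_p\,dP(\xi),
\]
in which every integrand has the same norm $\|M\|_p$. Because $\mathfrak{C}_p(\mathcal{H})$ is uniformly convex, hence strictly convex, for $1<p<\infty$, equality among vectors of equal norm forces them to coincide: $A^{i\xi}MB^{-i\xi}=M$ for $P$-almost every $\xi$, and since $\xi\mapsto A^{i\xi}MB^{-i\xi}$ is analytic while the support of $P$ has a limit point, for every $\xi\in\mathbb{R}$. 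Differentiating at $\xi=0$ gives $(\log A)M=M(\log B)$, whence $f(A)M=Mf(B)$ for continuous $f$ and in particular $AM=MB$. Expanding $M=AX+XB$ yields the intertwining $A^2X=XB^2$; applying the continuous functional calculus with $t\mapsto\sqrt{t}$ (valid since $A,B\ge 0$) then collapses this to $AX=XB$.

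I would flag two points as the genuinely delicate ones. The first is the equality statement for the uniform convexity of $\mathfrak{C}_p$: it is the only step that truly needs $p\neq 1,\infty$, and it is precisely the reason the endpoints are excluded, for at $p=1$ and $p=\infty$ the Schatten class fails to be strictly convex, equality in the triangle inequality no longer forces the integrand to be constant, and the conclusion $AX=XB$ can genuinely fail. The second is the reduction when $A$ or $B$ is not invertible, where $A^{i\xi}$ is only a partial isometry; there one first discards the subspaces $\mathcal{N}(A)$ and $\mathcal{N}(B)$ on which both sides of the identity vanish, and runs the argument on $\overline{\mathcal{R}(A)}$ and $\overline{\mathcal{R}(B)}$.
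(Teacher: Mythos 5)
First, a point of context: the paper does not prove this statement at all --- it is quoted, with citation, from Kittaneh's IEOT 2010 paper, where it is obtained from the convexity properties of $\nu\mapsto\|A^{\nu}XB^{1-\nu}+A^{1-\nu}XB^{\nu}\|_p$ (essentially the refinement recorded as \eqref{Ref_Ineq_Heinz} in the present paper), which reduces equality at an interior $\nu$ to the midpoint equality $2\|A^{1/2}XB^{1/2}\|_p=\|AX+XB\|_p$, whose equality condition in $\mathfrak{C}_p$ is then used. Your route is genuinely different: the positive-definite integral representation of the Heinz kernel (the Bhatia--Parthasarathy/McIntosh device) combined with the strict convexity of $\mathfrak{C}_p$ for $1<p<\infty$. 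In the case where $A$ and $B$ are \emph{invertible}, your argument is complete and correct: the representation, the equality case of the Bochner triangle inequality in a strictly convex space, the entire function $\xi\mapsto A^{i\xi}MB^{-i\xi}$, and the final functional-calculus steps all go through, and you correctly identify strict convexity as the reason the endpoints $p=1,\infty$ are excluded.

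The genuine gap is the degenerate case, which the theorem does cover since $A,B$ are only assumed positive. Your reduction rests on the claim that ``both sides of the identity vanish'' on $\mathcal{N}(A)$ and $\mathcal{N}(B)$; this is false for the right-hand side. Write $E,F$ for the projections onto $\overline{\mathcal{R}(A)},\overline{\mathcal{R}(B)}$, let $A_1,B_1$ be the (injective) restrictions of $A,B$ to these subspaces, and let $X_{ij}$ be the blocks of $X$ relative to the decompositions $\mathcal{H}=\overline{\mathcal{R}(A)}\oplus\mathcal{N}(A)$ and $\mathcal{H}=\overline{\mathcal{R}(B)}\oplus\mathcal{N}(B)$. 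For $0<\nu<1$ the Heinz side is indeed supported in the $(1,1)$ corner, but
\begin{equation*}
AX+XB=\begin{pmatrix} A_1X_{11}+X_{11}B_1 & A_1X_{12}\\ X_{21}B_1 & 0\end{pmatrix},
\end{equation*}
so the off-diagonal blocks $A_1X_{12}$ and $X_{21}B_1$ survive. Since $AX=XB$ is equivalent to the three conditions $A_1X_{11}=X_{11}B_1$, $X_{12}=0$, $X_{21}=0$, running your argument on the ranges alone proves only the first; the norm-equality hypothesis does force the other two, but only through an additional step you have not supplied, namely that for $p<\infty$ the equality $\|Y_{11}\|_p=\|Y\|_p$ for a block operator $Y$ forces the remaining blocks to vanish (itself another pinching/strict-convexity argument, e.g.\ comparing $Y$ with $UYV$ for the symmetries $U=2E-I$, $V=2F-I$). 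A second, related defect: after the reduction, $A_1$ and $B_1$ are injective but generally still \emph{not} invertible, so $\log A_1$ is unbounded and your ``differentiate at $\xi=0$'' step is unavailable; there one must instead exploit the full support of $P$ together with norm-continuity of $\xi\mapsto A_1^{i\xi}M B_1^{-i\xi}$ to get the group intertwining $A_1^{i\xi}M=MB_1^{i\xi}$ for all $\xi$, and then pass to $A_1M=MB_1$ by spectral (Fourier-transform) arguments rather than by differentiation. Both repairs are standard, but as written the proof establishes the theorem only for invertible $A,B$.
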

		
		\begin{theorem}\cite[Theorem 6]{Kittaneh_IEOT_2010}\label{thm:equiv_cond_p_1}
			Let \( A, B, \) and \( X \) be operators such that \( A \) and \( B \) are positive and invertible, and let \( 1 < p < \infty \). Then
			\[
			2 \left\| A^{1/2} X B^{1/2} \right\|_p = \left\| A^\nu X B^{1-\nu} + A^{1-\nu} X B^\nu \right\|_p
			\]
			for some \( \nu \) with \( 0 < \nu < 1 \), 
			\( v \neq \frac{1}{2} \), if and only if \( AX = XB \).
		\end{theorem}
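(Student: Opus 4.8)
The plan is to prove the two implications separately; the forward implication is a short functional-calculus computation, while the reverse implication is where the hypotheses $1<p<\infty$ and $\nu\neq\frac12$ are genuinely used.

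For the easy direction, suppose $AX=XB$. Since $A$ and $B$ are positive and invertible, their spectra are compact subsets of $(0,\infty)$, and $AX=XB$ propagates through the continuous functional calculus to $f(A)X=Xf(B)$ for every continuous $f$; in particular $A^{s}X=XB^{s}$ for all real $s$. Hence
\begin{equation*}
A^{\nu}XB^{1-\nu}=XB^{\nu}B^{1-\nu}=XB=A^{1-\nu}XB^{\nu}=A^{1/2}XB^{1/2},
\end{equation*}
so both sides of the asserted identity reduce to $2\|XB\|_{p}$ and equality holds for every $\nu\in(0,1)$.

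For the reverse direction, set $Y_{0}=A^{1/2}XB^{1/2}$, $c=\nu-\tfrac12\neq0$, and $W=A^{\nu}XB^{1-\nu}+A^{1-\nu}XB^{\nu}$. The idea is to realize the second Heinz inequality as an averaging inequality. Passing to the double operator integral attached to the spectral measures of $A$ and $B$, the scalar identity $\tfrac12(\alpha^{\nu}\beta^{1-\nu}+\alpha^{1-\nu}\beta^{\nu})=\sqrt{\alpha\beta}\,\cosh\!\big(c\log\tfrac{\alpha}{\beta}\big)$ shows that $W$ is the image of $2Y_{0}$ under the multiplier with symbol $\cosh(c\log\tfrac{\alpha}{\beta})\ge1$. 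Since the inverse symbol $\operatorname{sech}(c\log\tfrac{\alpha}{\beta})$ is the Fourier transform of a rescaled hyperbolic secant, which is a probability density on $\mathbb{R}$, this inverse multiplier is realized as an average of unitary conjugations: there is a probability measure $d\mu_{c}$ on $\mathbb{R}$ with
\begin{equation*}
2A^{1/2}XB^{1/2}=\int_{-\infty}^{\infty}A^{\mathrm{i}cs}\,W\,B^{-\mathrm{i}cs}\,d\mu_{c}(s).
\end{equation*}
Because $A^{\mathrm{i}cs}$ and $B^{\mathrm{i}cs}$ are unitary, each integrand has $p$-norm $\|W\|_{p}$, and the triangle inequality against the probability measure $\mu_{c}$ recovers $2\|A^{1/2}XB^{1/2}\|_{p}\le\|W\|_{p}$.

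The hypothesis is that this last inequality is an equality, and the main obstacle is to convert this into a structural constraint. Here I would invoke the uniform (hence strict) convexity of $\|\cdot\|_{p}$ for $1<p<\infty$: equality in the averaging inequality, with all integrands of equal norm and $\mu_{c}$ of full support, forces $A^{\mathrm{i}cs}WB^{-\mathrm{i}cs}=W$ for $\mu_{c}$-almost every $s$, hence for all $s$ by continuity. Differentiating at $s=0$ gives $(\log A)W=W(\log B)$, and exponentiating yields $AW=WB$; equivalently, $W$ is supported on the diagonal $\{\alpha=\beta\}$ of the double operator integral. Since the symbol $\cosh(c\log\tfrac{\alpha}{\beta})$ is strictly positive everywhere, the off-diagonal part of $Y_{0}$ must then vanish as well, so $AY_{0}=Y_{0}B$; substituting $Y_{0}=A^{1/2}XB^{1/2}$ and cancelling the invertible factors $A^{1/2}$ and $B^{1/2}$ gives $AX=XB$. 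The delicate points, and where the argument would break for $p\in\{1,\infty\}$, are precisely the strict-convexity step and the passage from the equal-norm averaging family to the pointwise identity $A^{\mathrm{i}cs}WB^{-\mathrm{i}cs}=W$.
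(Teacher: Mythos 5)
The first thing to note is that the paper does not prove this statement at all: it is reproduced verbatim from \cite[Theorem 6]{Kittaneh_IEOT_2010} and used as a black box in the proof of the subsequent theorem on Taylor invertible tuples. There is therefore no internal proof to compare yours against, and your argument must be judged on its own merits. On that basis it is essentially correct, and it is a genuine proof rather than a paraphrase of the citation: the forward implication via the functional calculus intertwining $AX=XB\Rightarrow f(A)X=Xf(B)$ is fine, and the reverse implication via the representation $2A^{1/2}XB^{1/2}=\int_{\mathbb{R}} A^{\mathrm{i}cs}WB^{-\mathrm{i}cs}\,d\mu_c(s)$ (using that $\operatorname{sech}$ is its own Fourier transform up to scaling, hence $1/\cosh$ is a positive-definite kernel), followed by the strict-convexity analysis of equality in the averaged triangle inequality, is sound. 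You also use each hypothesis exactly where it must be used: $\nu\neq\frac12$ gives $c\neq 0$ so that differentiation at $s=0$ has content; invertibility makes $\log A$, $\log B$ bounded and permits the final cancellation; $1<p<\infty$ gives uniform, hence strict, convexity of $\mathfrak{C}_p(\mathcal{H})$.

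Three points need tightening before this is complete. First, the integral representation itself needs an argument in infinite dimensions: naming ``double operator integrals'' is not yet a proof. Since $\log A$ and $\log B$ are bounded, a clean route is to verify the identity for scalar exponentials $z\mapsto e^{\lambda z}$ (where it reduces to $\widehat{\mu_c}$ being $\operatorname{sech}$), then for $A,B$ with finite spectrum, and finally for general $A,B$ by norm-approximation of the spectral resolutions together with dominated convergence in $s$. Second, the statement as transcribed tacitly assumes $X\in\mathfrak{C}_p(\mathcal{H})$ (equivalently $W\in\mathfrak{C}_p(\mathcal{H})$, by invertibility of $A$ and $B$); without this both norms can be $+\infty$ and the equivalence fails (take $A=2I$, $B=X=I$ on an infinite-dimensional space), and your strict-convexity step needs $W\in\mathfrak{C}_p(\mathcal{H})$ with $W\neq 0$ (the degenerate case $W=0$ forces $X=0$ and is trivial). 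Third, your closing ``off-diagonal part'' argument is vaguer than necessary and can be eliminated: once $A^{\mathrm{i}cs}WB^{-\mathrm{i}cs}=W$ for all $s$, substituting back into the integral representation gives $2A^{1/2}XB^{1/2}=W$ directly; combined with $AW=WB$ from your differentiation at $s=0$, this yields $A^{3/2}XB^{1/2}=A^{1/2}XB^{3/2}$, and multiplying on the left by $A^{-1/2}$ and on the right by $B^{-1/2}$ gives $AX=XB$, with no reference to diagonal supports of multipliers.
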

		
		The following result provides a necessary and sufficient condition for the second inequality of \eqref{eq:chain_ineq} to hold.

		\begin{theorem}\label{thm:2nd_eq}
			Let $1<p<\infty$ and let $\mathbf{T}=\mathbf{V}P=(V_1P,\ldots, V_dP)\in\mathfrak{C}^d_p(\mathcal{H})$ be a commuting $d$-tuple. Then  
			\begin{equation}\label{eq:trans_eq}
				\|\widehat{{\bf T}}(t) \|_{s,p}=\|\widehat{{\bf T}}\|_{s,p}  \quad  (t\in (0,1)),
			\end{equation}
			if and only if $\mathbf{T}$ is normal. 
		\end{theorem}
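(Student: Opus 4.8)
The plan is to translate the equality \eqref{eq:trans_eq} into the matrix language of Theorem \ref{thm:spher_quasi_matrix}, recognize it as an equality case of the Schatten Heinz inequality, extract the commutation relation $\mathbb{P}\mathbb{V}=\mathbb{V}\mathbb{P}$, and then bootstrap the resulting spherical quasinormality up to full normality using compactness.

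First I would write $\mathbf{T}=\mathbf{V}P$ and pass to the operator matrices $\mathbb{P},\mathbb{V}$ on $\mathcal{H}^d$. By \eqref{eq:transform_t_p} one has $\|\widehat{\mathbf{T}}(t)\|_{s,p}=\tfrac12\|\mathbb{P}^t\mathbb{V}\mathbb{P}^{1-t}+\mathbb{P}^{1-t}\mathbb{V}\mathbb{P}^t\|_p$, and specializing to $t=0$ gives $\|\widehat{\mathbf{T}}\|_{s,p}=\tfrac12\|\mathbb{P}\mathbb{V}+\mathbb{V}\mathbb{P}\|_p$; by Lemma \ref{lem:p_norm_equiv} we have $\mathbb{T}=\mathbb{V}\mathbb{P}\in\mathfrak{C}_p(\mathcal{H}^d)$, so all these norms are finite. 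Thus \eqref{eq:trans_eq} is exactly the equality $\|\mathbb{P}^t\mathbb{V}\mathbb{P}^{1-t}+\mathbb{P}^{1-t}\mathbb{V}\mathbb{P}^t\|_p=\|\mathbb{P}\mathbb{V}+\mathbb{V}\mathbb{P}\|_p$, which is the equality case of Theorem \ref{thm:equiv_cond_p} with $A=B=\mathbb{P}\ge 0$, $X=\mathbb{V}$ and $\nu=t\in(0,1)$. Invoking that theorem, I would conclude $\mathbb{P}\mathbb{V}=\mathbb{V}\mathbb{P}$, which by Theorem \ref{thm:spher_quasi_matrix} says precisely that $\mathbf{T}$ is spherically quasinormal.

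The hard part is to upgrade spherical quasinormality to normality, and this is where compactness is essential. Since spherically quasinormal tuples are jointly hyponormal (the implication chain recalled in the introduction), the matrix $[\mathbf{T}^*,\mathbf{T}]$ is positive, so in particular each diagonal block $[T_i^*,T_i]=T_i^*T_i-T_iT_i^*$ is positive; that is, every $T_i$ is hyponormal. By Theorem \ref{thm:coordinate_p_1_2} each $T_i$ lies in $\mathfrak{C}_p(\mathcal{H})$ and is therefore compact, so I can invoke the classical fact that a compact hyponormal operator is normal to deduce that each $T_i$ is normal. Since $\mathbf{T}$ is commuting by hypothesis, this is exactly the statement that $\mathbf{T}$ is a normal tuple.

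For the converse I would argue directly. If $\mathbf{T}$ is normal it is spherically quasinormal, hence $\mathbb{P}\mathbb{V}=\mathbb{V}\mathbb{P}$ by Theorem \ref{thm:spher_quasi_matrix}, and then $\mathbb{V}$ commutes with every power $\mathbb{P}^s$ by the functional calculus. Consequently $\mathbb{P}^t\mathbb{V}\mathbb{P}^{1-t}=\mathbb{V}\mathbb{P}=\mathbb{P}^{1-t}\mathbb{V}\mathbb{P}^t$, so the matrices representing $\widehat{\mathbf{T}}(t)$ and $\widehat{\mathbf{T}}$ both reduce to $\mathbb{V}\mathbb{P}$ and \eqref{eq:trans_eq} holds trivially for all $t\in(0,1)$. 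The only genuinely subtle step is the forward passage from spherical quasinormality to normality; the remainder is a routine translation into the matrix picture together with the cited equality conditions.
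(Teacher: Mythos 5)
Your proof is correct and follows essentially the same route as the paper: translate \eqref{eq:trans_eq} into the equality $\|\mathbb{P}^t\mathbb{V}\mathbb{P}^{1-t}+\mathbb{P}^{1-t}\mathbb{V}\mathbb{P}^t\|_p=\|\mathbb{P}\mathbb{V}+\mathbb{V}\mathbb{P}\|_p$, invoke Theorem~\ref{thm:equiv_cond_p} to get $\mathbb{P}\mathbb{V}=\mathbb{V}\mathbb{P}$, deduce spherical quasinormality via Theorem~\ref{thm:spher_quasi_matrix}, and upgrade to normality through joint hyponormality plus the fact that compact hyponormal operators are normal. The only cosmetic difference is in the converse, where the paper cites \cite{CurtoYoon19} to obtain $V_iP=PV_i$ directly, while you reuse Theorem~\ref{thm:spher_quasi_matrix} and the functional calculus; both yield the same commutation relation.
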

		\begin{proof}
			
			If $\mathbf{T}$ is normal, then it is spherically quasinormal. Using \cite[Lemma 2.1]{CurtoYoon19}, we have that $V_iP=PV_i$, $i\in\{1,\ldots,d\}$. From here, it is obvious that $\widehat{{\bf T}}(t)=\mathbf{T}$ for any $t\in[0,1]$. In particular, $\widehat{{\bf T}}(t)=\widehat{{\bf T}}$, showing that \eqref{eq:trans_eq} holds. 
			
			\medskip 
			
			Now assume that $\eqref{eq:trans_eq}$ is true. It follows from \eqref{eq:transform_t_p} that for any $t\in[0,1]$,
			
			\begin{align*}\label{Eq_01}
				\|\widehat{{\bf T}}(t)\|_{s,p}=\frac{1}{2}\|\mathbb{P}^t\mathbb{V}\mathbb{P}^{1-t}+\mathbb{P}^{1-t}\mathbb{V}\mathbb{P}^t\|_p.
			\end{align*}
			Hence, $\|\widehat{{\bf T}}(t) \|_{s,p}=\|\widehat{{\bf T}}\|_{s,p}$ is equivalent with 
			\begin{equation*}
				\|\mathbb{P}^t\mathbb{V}\mathbb{P}^{1-t}+\mathbb{P}^{1-t}\mathbb{V}\mathbb{P}^t\|_p=\|\mathbb{P}\mathbb{V}+\mathbb{V}\mathbb{P}\|_p.
			\end{equation*}
			
			By Theorem \ref{thm:equiv_cond_p}, we have that $\mathbb{P}\mathbb{V}=\mathbb{V}\mathbb{P}$. Now, Theorem \ref{thm:spher_quasi_matrix} yields the spherical quasinormality of $\mathbf{T}$. Consequently, $\mathbf{T}$ is jointly hyponormal, and in particular, for each $i\in\{1,\ldots,d\}$,  $T_i$ is hyponormal. We also have that  $T_i$ is compact. Since every compact hyponormal operator is in fact normal (see \cite[Corollary 4.9]{Conway91}), we obtain that $T_i$ is normal for each $i\in\{1,\ldots,d\}$, showing that $\mathbf{T}$ is normal. 
		\end{proof}

		In a similar fashion, we consider the first inequality in \eqref{eq:chain_ineq}. Note that due to an invertibility condition, we are restricted to a finite-dimensional case.

		\begin{theorem}
			Let $1<p<\infty$ and let $\mathbf{T}=(T_1,\ldots,T_d)$ be a $d$-tuple of commuting complex matrices. If $\mathbf{T}$ is Taylor invertible, then  
			\begin{equation*}\label{eq:trans_eq_am}
				\|\widetilde{{\bf T}}\|_{s,p}= \|\widehat{{\bf T}}(t)\|_{s,p}\quad  \left(t\in (0,1), t\neq \frac{1}{2}\right),
			\end{equation*}
			if and only if $\mathbf{T}$ is normal. 
		\end{theorem}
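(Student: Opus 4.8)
The plan is to follow the template already established in Theorem~\ref{thm:2nd_eq}, replacing the invocation of Theorem~\ref{thm:equiv_cond_p} by Theorem~\ref{thm:equiv_cond_p_1}; this substitution is exactly what forces the strengthened hypotheses (finite dimension and Taylor invertibility), since the latter equality-condition result requires $A$ and $B$ to be \emph{invertible}. For the sufficiency I would argue exactly as before: if $\mathbf{T}$ is normal then it is spherically quasinormal, so by \cite[Lemma 2.1]{CurtoYoon19} we have $V_iP=PV_i$ for each $i$. Hence $V_i$ commutes with every power of $P$, and a direct computation gives $\widetilde{\mathbf{T}}_i=P^{1/2}V_iP^{1/2}=V_iP=T_i$ and $\widehat{\mathbf{T}}_i(t)=\tfrac12\left(P^tV_iP^{1-t}+P^{1-t}V_iP^t\right)=V_iP=T_i$ for every $t\in[0,1]$. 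Thus $\widetilde{\mathbf{T}}=\mathbf{T}=\widehat{\mathbf{T}}(t)$, and the two Schatten $p$-norms coincide trivially.

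For the necessity, the first task is to convert Taylor invertibility into the invertibility of $P$. Since $\mathbf{T}$ is a commuting tuple of matrices, if $\bigcap_{i}\mathcal{N}(T_i)\neq\{0\}$ then $\mathbf{0}=(0,\dots,0)$ would be a joint eigenvalue, so the first nonzero arrow of the Koszul complex $\mathcal{K}(\mathbf{T},\mathcal{H})$ (the column map $x\mapsto(T_1x,\dots,T_dx)$) would fail to be injective, contradicting Taylor invertibility. Therefore $\bigcap_{i}\mathcal{N}(T_i)=\{0\}$, which forces $P=\sqrt{\sum_i T_i^*T_i}$ to be positive and invertible, and consequently $\mathbb{P}=P\oplus\cdots\oplus P$ is positive and invertible as well.

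With invertibility secured, I would recall from \eqref{eq:transform_t_p} that $\|\widehat{\mathbf{T}}(t)\|_{s,p}=\tfrac12\|\mathbb{P}^t\mathbb{V}\mathbb{P}^{1-t}+\mathbb{P}^{1-t}\mathbb{V}\mathbb{P}^t\|_p$, while Lemma~\ref{lem:p_norm_equiv} gives $\|\widetilde{\mathbf{T}}\|_{s,p}=\|\mathbb{P}^{1/2}\mathbb{V}\mathbb{P}^{1/2}\|_p$. The assumed equality then reads $2\|\mathbb{P}^{1/2}\mathbb{V}\mathbb{P}^{1/2}\|_p=\|\mathbb{P}^t\mathbb{V}\mathbb{P}^{1-t}+\mathbb{P}^{1-t}\mathbb{V}\mathbb{P}^t\|_p$ for some $t\in(0,1)$ with $t\neq\tfrac12$. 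Applying Theorem~\ref{thm:equiv_cond_p_1} with $A=B=\mathbb{P}$, $X=\mathbb{V}$ and $\nu=t$ yields $\mathbb{P}\mathbb{V}=\mathbb{V}\mathbb{P}$. By Theorem~\ref{thm:spher_quasi_matrix}, $\mathbf{T}$ is then spherically quasinormal, hence jointly hyponormal, so positivity of the diagonal blocks of $[\mathbf{T}^*,\mathbf{T}]$ shows that each $T_i$ is hyponormal. Finally, a hyponormal matrix satisfies $T_i^*T_i-T_iT_i^*\geq0$ with $\mathrm{tr}\,(T_i^*T_i-T_iT_i^*)=0$, which forces $T_i^*T_i=T_iT_i^*$; thus each $T_i$ is normal, and being commuting, $\mathbf{T}$ is normal.

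The main obstacle is the passage from Taylor invertibility to the invertibility of $P$: this is the structural ingredient that makes Theorem~\ref{thm:equiv_cond_p_1} applicable and that explains why the statement is confined to the finite-dimensional, Taylor-invertible setting (contrast with Theorem~\ref{thm:2nd_eq}, where Theorem~\ref{thm:equiv_cond_p} imposes no invertibility requirement). Once invertibility is in place, the remaining steps are a direct transcription of the equality-case machinery already employed in Theorem~\ref{thm:2nd_eq}.
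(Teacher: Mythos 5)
Your proof is correct and follows essentially the same route as the paper's: reduce to the block matrices $\mathbb{P},\mathbb{V}$ via \eqref{eq:transform_t_p} and Lemma \ref{lem:p_norm_equiv}, apply Theorem \ref{thm:equiv_cond_p_1} (whose invertibility hypothesis is exactly what the Taylor-invertibility assumption supplies), deduce spherical quasinormality from Theorem \ref{thm:spher_quasi_matrix}, and conclude coordinatewise normality as in Theorem \ref{thm:2nd_eq}. The only deviations are that you justify two auxiliary facts directly rather than by citation --- obtaining the invertibility of $P$ from exactness of the Koszul complex at its first stage, where the paper cites \cite[Corollary 3.6]{Curto81}, and showing that a hyponormal matrix is normal via the trace argument, where the paper invokes \cite[Corollary 4.9]{Conway91} --- and both substitutes are valid.
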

		
		\begin{proof}
			Since $\mathbf{T}$ is Taylor invertible, \cite[Corollary 3.6]{Curto81} implies that $P$ is invertible. Consequently, $\mathbb{P}$ is also invertible. The proof now follows from Theorem \ref{thm:equiv_cond_p_1} and by  using the similar lines of arguments as in the proof of Theorem \ref{thm:2nd_eq}.
		\end{proof}
		
		\bigskip

		\section{Joint Schatten $p$-numerical radius inequalities}
		Following the work of \cite{AharmimLabbane24}, we give the following definition:
		\begin{definition}
			Let $1\leq p<\infty$ and $\mathbf{T}=(T_1,\ldots,T_d)\in \mathfrak{C}_p^d(\mathcal{H})$. The (joint) Schatten $p$-numerical radius of $\mathbf{T}$, $\omega_{s,p}(\mathbf{T})$, is defined as
			\begin{equation*}
				\omega_{s,p}(\mathbf{T}):=\sup_{\bm{\lambda}\in \overline{\mathbb{B}_d}}\omega_p\left(\lambda_1 T_1+\cdots+\lambda_dT_d\right),
			\end{equation*}  
			where $\mathbb{B}_d$ is given by \eqref{eq:unit_ball} and $\omega_p(T)=\sup_{\theta\in\mathbb{R}}\norm{\Re\left(e^{i\theta}T\right)}_p$, $T\in\mathfrak{C}_p(\mathcal{H})$.
			In other words, 
			\begin{equation*}
				\omega_{s,p}(\mathbf{T})=\sup_{\bm{\lambda}\in \overline{\mathbb{B}_d}}\sup_{\theta\in\mathbb{R}} \norm{ \Re\left[e^{i\theta}(\lambda_1 T_1+\cdots+\lambda_dT_d)\right]}_p.
			\end{equation*}
		\end{definition}

		We also define the following quantity:
		\begin{definition}
			Let $1\leq p<\infty$ and $\mathbf{T}=(T_1,\ldots,T_d)\in \mathfrak{C}_p^d(\mathcal{H})$. The Schatten hypo-$p$-norm of $\mathbf{T}$, $\norm{\mathbf{T}}_{s,h,p}$, is defined as 
			\begin{equation*}
				\norm{\mathbf{T}}_{s,h,p}:=\sup_{\bm{\lambda}\in\overline{ \mathbb{B}_d}}\norm{ \lambda_1 T_1+\cdots+\lambda_dT_d}_p.
			\end{equation*}
		\end{definition}

		The following theorem shows that the previously introduced notions are well-defined and provides fundamental relations between them.
		
		\begin{theorem}\label{thm:nr_norm_p_ineq}
			Let $1\leq p<\infty$ and $\mathbf{T}=(T_1,\ldots,T_d)=(V_1P,\ldots, V_dP)\in \mathfrak{C}_p^d(\mathcal{H})$. Then
			\begin{equation}\label{eq:nr_norm_p_ineq}
				\omega_{s,p}(\mathbf{T})\leq \norm{\mathbf{T}}_{s,h,p}\leq \|{\bf T}\|_{s,p}.
			\end{equation}
		\end{theorem}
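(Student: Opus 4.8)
The plan is to establish the two inequalities in \eqref{eq:nr_norm_p_ineq} independently, in each case reducing the problem to a bound on the single Schatten $p$-norm of the combination $\lambda_1 T_1+\cdots+\lambda_d T_d$ and only afterwards taking the supremum over $\bm{\lambda}\in\overline{\mathbb{B}_d}$.

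For the first inequality I would begin with the elementary observation that $\omega_p(S)\leq\norm{S}_p$ for every $S\in\mathfrak{C}_p(\mathcal{H})$. Writing $\Re(e^{i\theta}S)=\tfrac{1}{2}\big(e^{i\theta}S+(e^{i\theta}S)^*\big)$ and combining the triangle inequality with the Schatten-norm identity $\norm{S^*}_p=\norm{S}_p$ gives $\norm{\Re(e^{i\theta}S)}_p\leq\norm{e^{i\theta}S}_p=\norm{S}_p$ for every real $\theta$; taking the supremum over $\theta$ yields $\omega_p(S)\leq\norm{S}_p$. Applying this with $S=\lambda_1 T_1+\cdots+\lambda_d T_d$ and then passing to the supremum over $\bm{\lambda}\in\overline{\mathbb{B}_d}$ gives exactly $\omega_{s,p}(\mathbf{T})\leq\norm{\mathbf{T}}_{s,h,p}$.

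For the second inequality the idea is to factor the linear combination through the positive part $P$. Since $T_i=V_iP$, one has $\lambda_1 T_1+\cdots+\lambda_d T_d=U_{\bm{\lambda}}P$, where $U_{\bm{\lambda}}:=\sum_{i=1}^d\lambda_i V_i$ (matching the notation used in the proof of Theorem \ref{thm:lambda_mean}). Using the ideal property $\norm{U_{\bm{\lambda}}P}_p\leq\norm{U_{\bm{\lambda}}}\,\norm{P}_p$ together with $\norm{P}_p=\norm{\mathbf{T}}_{s,p}$ from Lemma \ref{lem:p_norm_equiv}, it suffices to prove $\norm{U_{\bm{\lambda}}}\leq 1$ for every $\bm{\lambda}\in\overline{\mathbb{B}_d}$.

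The main (though still modest) point is precisely this operator-norm bound, and I expect it to be the only step that genuinely uses the structure of the spherical polar decomposition. I would obtain it by factoring $U_{\bm{\lambda}}=\rho\mathbf{V}$, where $\mathbf{V}\colon\mathcal{H}\to\mathcal{H}^d$ is the spherical partial isometry from the polar decomposition and $\rho=[\lambda_1 I,\ldots,\lambda_d I]\colon\mathcal{H}^d\to\mathcal{H}$ is the row operator with scalar entries. Then $\norm{\mathbf{V}}^2=\norm{\mathbf{V}^*\mathbf{V}}=\norm{\sum_{i=1}^d V_i^*V_i}\leq 1$, since $\sum_{i=1}^d V_i^*V_i$ is an orthogonal projection, while $\rho\rho^*=\big(\sum_{i=1}^d\aps{\lambda_i}^2\big)I$ gives $\norm{\rho}=\norm{\bm{\lambda}}_2\leq 1$. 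Submultiplicativity of the operator norm then yields $\norm{U_{\bm{\lambda}}}\leq\norm{\rho}\,\norm{\mathbf{V}}\leq 1$, whence $\norm{\lambda_1 T_1+\cdots+\lambda_d T_d}_p\leq\norm{\mathbf{T}}_{s,p}$; taking the supremum over $\bm{\lambda}\in\overline{\mathbb{B}_d}$ completes the proof.
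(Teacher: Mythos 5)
Your proposal is correct, and its skeleton coincides with the paper's proof: reduce both inequalities to bounds on $\norm{\lambda_1T_1+\cdots+\lambda_dT_d}_p$ for a fixed $\bm{\lambda}$, write this combination as $U_{\bm{\lambda}}P$ with $U_{\bm{\lambda}}=\sum_{i=1}^d\lambda_iV_i$, use the ideal property $\norm{U_{\bm{\lambda}}P}_p\leq\norm{U_{\bm{\lambda}}}\norm{P}_p$ together with $\norm{P}_p=\norm{\mathbf{T}}_{s,p}$ from Lemma \ref{lem:p_norm_equiv}, and take suprema only at the end. The only differences lie in how the two auxiliary facts are justified. For $\omega_p(S)\leq\norm{S}_p$ the paper cites \cite[Theorem 2]{OmarKittaneh_LAA_2019}, whereas you give the one-line argument via $\Re(e^{i\theta}S)=\tfrac{1}{2}\bigl(e^{i\theta}S+e^{-i\theta}S^*\bigr)$, the triangle inequality, and $\norm{S^*}_p=\norm{S}_p$; both are valid, and yours is self-contained. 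For $\norm{U_{\bm{\lambda}}}\leq 1$ the paper invokes a H\"{o}lder-type inequality of Albadawi (\cite[Theorem 17]{Albadawi12}) with $A_i=\lambda_iI$ and $B_i=V_i$, which yields $\norm{U_{\bm{\lambda}}}\leq\norm{\bm{\lambda}}_2\norm{\sum_{i=1}^dV_i^*V_i}^{1/2}\leq 1$; your row--column factorization $U_{\bm{\lambda}}=\rho\mathbf{V}$, with $\norm{\rho}=\norm{\bm{\lambda}}_2\leq 1$ and $\norm{\mathbf{V}}^2=\norm{\sum_{i=1}^dV_i^*V_i}\leq 1$ because this sum is an orthogonal projection, proves exactly the same estimate from scratch and is in effect the standard elementary proof of that special case of the cited inequality. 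So the two arguments are mathematically equivalent; yours trades two external citations for short direct computations, making it more elementary and self-contained, while the paper's route is shorter on the page.
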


		\begin{proof}
			Let us first show the second inequality in \eqref{eq:nr_norm_p_ineq}. Let $\bm{\lambda}=(\lambda_1,\ldots,\lambda_d)\in\mathbb{B}_d$ be arbitrary and let $U_{\bm{\lambda}}=\sum_{i=1}^d\lambda_iV_i$. Then
			\begin{equation*}
				\lambda_1 T_1+\cdots+\lambda_dT_d=U_{\bm{\lambda}}P.
			\end{equation*}
			Note that \cite[Theorem 17]{Albadawi12} applied to $A_i=\lambda_i I$ and $B_i=V_i$, $i\in\{1,\ldots,d\}$, yields
			\begin{equation*}
				\norm{U_{\bm{\lambda}}}=\norm{\sum_{i=1}^d\lambda_iV_i}\leq \left(\sum_{i=1}^d|\lambda_i|^2\right)^\frac{1}{2}\norm{\sum_{i=1}^d|V_i|^2}^\frac{1}{2}\leq 1,
			\end{equation*}
			since $\bm{\lambda}\in \mathbb{B}_d$ and $\sum_{i=1}^dV_i^*V_i\leq I$. Thus, using Lemma \ref{lem:p_norm_equiv}, we have
			\begin{equation*}
				\norm{\lambda_1 T_1+\cdots+\lambda_dT_d}_p=	\norm{U_{\bm{\lambda}}P}_p\leq \norm{U_{\bm{\lambda}}}\norm{P}_p\leq \|{\bf T}\|_{s,p}.
			\end{equation*}
			Therefore,
			\begin{equation*}
				\norm{\mathbf{T}}_{s,h,p}=\sup_{\bm{\lambda}\in\overline{ \mathbb{B}_d}}\norm{ \lambda_1 T_1+\cdots+\lambda_dT_d}_p\leq \|{\bf T}\|_{s,p},
			\end{equation*}
			Since $\norm{T}_p=\norm{T^*}_p$ for any operator $\mathfrak{C}_p(\mathcal{H})$,	using \cite[Theorem 2]{OmarKittaneh_LAA_2019}, it follows that
			
			\begin{equation*}
				\omega_p\left(\lambda_1 T_1+\cdots+\lambda_dT_d\right)\leq \norm{\lambda_1 T_1+\cdots+\lambda_dT_d}_p
			\end{equation*}
			From here, 
			
			\begin{equation*}
				\sup_{\bm{\lambda}\in \overline{\mathbb{B}_d}}\omega_p\left(\lambda_1 T_1+\cdots+\lambda_dT_d\right)\leq \sup_{\bm{\lambda}\in\overline{ \mathbb{B}_d}}\norm{ \lambda_1 T_1+\cdots+\lambda_dT_d}_p,
			\end{equation*}
			i.e., 
			\begin{equation*}
				\omega_{s,p}(\mathbf{T})\leq \norm{\mathbf{T}}_{s,h,p}.
			\end{equation*}
			This completes the proof.
		\end{proof}

		\begin{remark}
			Let $1\leq p<\infty$. Since $\norm{T}_p=\norm{T^*}_p$ and $\Re(T)=\Re(T^*)$ for any $T\in \mathfrak{C}_p(\mathcal{H})$, it is easy to see that 
			$$\norm{\mathbf{T}}_{s,h,p}=\norm{\mathbf{T}^*}_{s,h,p} \quad \text{ and }\quad \omega_{s,p}(\mathbf{T})=\omega_{s,p}(\mathbf{T}^*)$$ for $\mathbf{T}=(T_1,\ldots,T_d)\in \mathfrak{C}_p^d(\mathcal{H})$. In the case $p=2$, using the commutativity of the trace, we also have that
            
			\begin{equation*}	\norm{\mathbf{T}}_{s,2}^2=\mathrm{tr} \left(\sum_{i=1}^dT_i^*T_i\right) =  \mathrm{tr} \left(\sum_{i=1}^dT_iT_i^*\right)=	\norm{\mathbf{T}^*}_{s,2}^2.
            \end{equation*}
			However, the previous equality does not always hold for $	\|\cdot\|_{s,p}$, $p\neq 2$ (we can take $\mathbf{T}$ as in Example \ref{ex:sharp} below). This complements a similar remark regarding the operator norm and the hypo-norm of operator tuples.  
		\end{remark}

		In order to prove our next result, we need the following useful lemma. It follows from \cite[Lemma 2.1] {Hirzallah_Kittaneh_2008}, by choosing the function $f(t)= t^r$, which is concave (convex) for $0<r< 1$ ($r\geq  1$) on $[0,\infty)$. 


		\begin{lemma}\cite[Lemma 2.1] {Hirzallah_Kittaneh_2008} \label{lem:r_mon}
			Let $A_k\in\mathfrak{B}(\mathcal{H})$, $k\in\{1,\ldots,d\}$, be positive operators. Then, for every unitarily invariant norm
			\begin{equation}\label{lem:r_mon_Inq_1}
				\vertiii{\left(\sum_{k=1}^dA_k\right)^r}\leq \vertiii{\sum_{k=1}^dA_k^r},
			\end{equation}
			for $0<r< 1$ and 
			\begin{equation}\label{lem:r_mon_Inq_2}				\vertiii{\left(\sum_{k=1}^dA_k\right)^r}\leq d^{r-1}\vertiii{\sum_{k=1}^dA_k^r},
			\end{equation}
			for $r\geq 1$.
		\end{lemma}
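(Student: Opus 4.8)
The plan is to obtain both inequalities by specializing, to the power function $f(t)=t^r$, the general unitarily invariant norm inequalities for a scalar function applied to a sum of positive operators that constitute \cite[Lemma 2.1]{Hirzallah_Kittaneh_2008}. The first step is to record the elementary properties that decide which branch applies: $f(t)=t^r$ is non-negative on $[0,\infty)$ with $f(0)=0$, it is concave for $0<r<1$, and it is convex for $r\geq 1$. These sign and convexity properties are exactly the hypotheses under which the two branches of the cited lemma operate, so the bulk of the work is simply matching the function to the correct branch.

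For $0<r<1$ I would invoke the concave (subadditive) branch: for a non-negative concave function and positive operators $A_1,\dots,A_d$ one has $\vertiii{f(\sum_k A_k)}\leq \vertiii{\sum_k f(A_k)}$. Substituting $f(t)=t^r$ produces \eqref{lem:r_mon_Inq_1} immediately. (Alternatively, the $d$-variable concave inequality follows from the two-variable Bourin--Uchiyama subadditivity inequality $\vertiii{f(A+B)}\leq\vertiii{f(A)+f(B)}$ by induction, using the operator concavity of $t\mapsto t^r$ on $[0,1]$ together with the monotonicity of $\vertiii{\cdot}$ on positive operators.)

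For $r\geq 1$ the function is convex, and this is where the nontrivial constant $d^{r-1}$ must be produced. The maneuver I would use is positive homogeneity combined with the averaged Jensen-type form of the convex branch, namely $\vertiii{f(\frac1d\sum_k A_k)}\leq \frac1d\vertiii{\sum_k f(A_k)}$. Applying this with $f(t)=t^r$ and using the functional-calculus identity $\big(\frac1d\sum_k A_k\big)^r=d^{-r}\big(\sum_k A_k\big)^r$ gives $d^{-r}\vertiii{(\sum_k A_k)^r}\leq \frac1d\vertiii{\sum_k A_k^r}$, and multiplying through by $d^{r}$ yields \eqref{lem:r_mon_Inq_2}. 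The essential point is that normalizing the sum by $1/d$ \emph{before} applying $t\mapsto t^r$ is precisely what converts the scalar power-mean constant $d^{r-1}$ into the operator setting.

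I expect the convex case to be the main obstacle. The naive superadditive companion of Bourin--Uchiyama, $\vertiii{\sum_k f(A_k)}\leq \vertiii{f(\sum_k A_k)}$, only \emph{lower}-bounds $\vertiii{(\sum_k A_k)^r}$ and therefore cannot by itself deliver the desired \emph{upper} bound in \eqref{lem:r_mon_Inq_2}; one genuinely needs the averaged Jensen form and the homogeneity scaling. One must also take care not to appeal to the operator inequality $\sum_k A_k^r\leq(\sum_k A_k)^r$, since it already fails for non-commuting positive operators when $r=2$; only the norm-level statement survives. Finally, since $f(0)=0$, no boundary term $f(0)$ contaminates either branch, which keeps the constants clean.
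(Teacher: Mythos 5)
Your proposal is correct and takes essentially the same route as the paper: the paper's entire justification is the one-line observation preceding the lemma, namely that \cite[Lemma 2.1]{Hirzallah_Kittaneh_2008} applied to $f(t)=t^{r}$, which is concave on $[0,\infty)$ for $0<r<1$ and convex for $r\geq 1$, yields both \eqref{lem:r_mon_Inq_1} and \eqref{lem:r_mon_Inq_2}. Your further reconstruction of the convex branch (the averaged Jensen-type inequality combined with the homogeneity identity $\bigl(\tfrac{1}{d}\sum_{k}A_k\bigr)^{r}=d^{-r}\bigl(\sum_{k}A_k\bigr)^{r}$, and the warning that the superadditive Kosem-type direction alone cannot give the upper bound) is sound but is detail internal to the cited lemma, which the paper simply invokes.
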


		\begin{theorem}
			Let $1\leq p<\infty$ and $\mathbf{T}=(T_1,\ldots,T_d)\in \mathfrak{C}_p^d(\mathcal{H})$. Then	
			
			\begin{equation*}
				\frac{1}{2}\|{\bf T}\|_{s,h,p}	\leq \omega_{s,p}(\mathbf{T})\leq \|{\bf T}\|_{s,h,p}.
			\end{equation*}
			Moreover, 
			
			\begin{equation}\label{eq:norm_s_p_equiv}
				\frac{1}{\sqrt[p]{d}}\|{\bf T}\|_{s,p}	\leq \|{\bf T}\|_{s,h,p}\leq \|{\bf T}\|_{s,p} 
			\end{equation}
 for $1\leq p< 2$,  and
				\begin{equation}\label{eq:norm_s_p_equiv_2}
					\frac{1}{\sqrt{d}}\|{\bf T}\|_{s,p}	\leq \|{\bf T}\|_{s,h,p}\leq \|{\bf T}\|_{s,p} 
				\end{equation}
 for $2\leq p<\infty$.
		\end{theorem}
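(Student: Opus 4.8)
The plan is to derive the first chain of inequalities from the corresponding one-operator estimates by taking suprema, and to obtain the norm equivalence from the upper bound already established in Theorem \ref{thm:nr_norm_p_ineq} together with Lemma \ref{lem:r_mon} applied to the trace.

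For the numerical radius bounds, I would fix $\bm{\lambda}\in\overline{\mathbb{B}_d}$ and write $S_{\bm{\lambda}}=\lambda_1T_1+\cdots+\lambda_dT_d\in\mathfrak{C}_p(\mathcal{H})$. The key ingredient is the single-operator estimate $\frac{1}{2}\norm{S}_p\leq\omega_p(S)\leq\norm{S}_p$: the right inequality is \cite[Theorem 2]{OmarKittaneh_LAA_2019}, while the left one follows by writing $S=\Re(S)+i\Im(S)$ and observing that $\norm{\Re(S)}_p=\norm{\Re(e^{i0}S)}_p\leq\omega_p(S)$ and $\norm{\Im(S)}_p=\norm{\Re(e^{-i\pi/2}S)}_p\leq\omega_p(S)$, so that the triangle inequality gives $\norm{S}_p\leq\norm{\Re(S)}_p+\norm{\Im(S)}_p\leq 2\omega_p(S)$. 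Taking the supremum over $\bm{\lambda}\in\overline{\mathbb{B}_d}$ in $\frac{1}{2}\norm{S_{\bm{\lambda}}}_p\leq\omega_p(S_{\bm{\lambda}})\leq\norm{S_{\bm{\lambda}}}_p$ and invoking the definitions of $\omega_{s,p}(\mathbf{T})$ and $\norm{\mathbf{T}}_{s,h,p}$ immediately yields $\frac{1}{2}\norm{\mathbf{T}}_{s,h,p}\leq\omega_{s,p}(\mathbf{T})\leq\norm{\mathbf{T}}_{s,h,p}$.

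Next, for the equivalences \eqref{eq:norm_s_p_equiv} and \eqref{eq:norm_s_p_equiv_2}, the upper bounds $\norm{\mathbf{T}}_{s,h,p}\leq\norm{\mathbf{T}}_{s,p}$ are exactly the second inequality in \eqref{eq:nr_norm_p_ineq}. For the lower bounds I would proceed in two steps. First, taking $\bm{\lambda}$ to be the $k$-th canonical basis vector of $\mathbb{C}^d$ (which lies in $\overline{\mathbb{B}_d}$) gives $\norm{T_k}_p\leq\norm{\mathbf{T}}_{s,h,p}$ for each $k\in\{1,\ldots,d\}$. Second, since $\mathrm{tr}(A)=\norm{A}_1$ for every positive $A\in\mathfrak{C}_1(\mathcal{H})$, I would apply Lemma \ref{lem:r_mon} with the trace norm $\norm{\cdot}_1$, the positive operators $A_k=T_k^*T_k$, and the exponent $r=\frac{p}{2}$. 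Writing
\begin{equation*}
\norm{\mathbf{T}}_{s,p}^p=\mathrm{tr}(P^p)=\mathrm{tr}\left(\left(\sum_{k=1}^dT_k^*T_k\right)^{p/2}\right)\quad\text{and}\quad\norm{T_k}_p^p=\mathrm{tr}\left((T_k^*T_k)^{p/2}\right),
\end{equation*}
inequality \eqref{lem:r_mon_Inq_1} (valid for $0<r<1$, i.e.\ $1\leq p<2$) produces $\norm{\mathbf{T}}_{s,p}^p\leq\sum_{k=1}^d\norm{T_k}_p^p\leq d\,\norm{\mathbf{T}}_{s,h,p}^p$, which after taking $p$-th roots is \eqref{eq:norm_s_p_equiv}; while for $2\leq p<\infty$ we have $r=\frac{p}{2}\geq 1$, and \eqref{lem:r_mon_Inq_2} gives $\norm{\mathbf{T}}_{s,p}^p\leq d^{\frac{p}{2}-1}\sum_{k=1}^d\norm{T_k}_p^p\leq d^{\frac{p}{2}}\norm{\mathbf{T}}_{s,h,p}^p$, which is \eqref{eq:norm_s_p_equiv_2}.

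The step requiring the most care is the recognition that the trace functional coincides with the trace norm $\norm{\cdot}_1$ on positive operators, so that Lemma \ref{lem:r_mon}—stated for unitarily invariant norms—is legitimately applicable to $\mathrm{tr}(\cdot)$; once this is in place, the dichotomy between \eqref{eq:norm_s_p_equiv} and \eqref{eq:norm_s_p_equiv_2} is dictated precisely by the two regimes $0<r<1$ and $r\geq 1$ of that lemma with $r=p/2$ (note that both bounds agree at $p=2$). Finiteness of all the traces is guaranteed by $\mathbf{T}\in\mathfrak{C}_p^d(\mathcal{H})$, since $T_k\in\mathfrak{C}_p(\mathcal{H})$ forces $(T_k^*T_k)^{p/2}\in\mathfrak{C}_1(\mathcal{H})$ and $P^p\in\mathfrak{C}_1(\mathcal{H})$.
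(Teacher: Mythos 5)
Your proposal is correct and takes essentially the same approach as the paper: both derive the numerical-radius bounds by taking suprema over $\bm{\lambda}\in\overline{\mathbb{B}_d}$ of the single-operator estimate $\tfrac{1}{2}\norm{S}_p\leq\omega_p(S)\leq\norm{S}_p$, and both obtain the lower bounds in \eqref{eq:norm_s_p_equiv} and \eqref{eq:norm_s_p_equiv_2} by combining $\norm{T_k}_p\leq\norm{\mathbf{T}}_{s,h,p}$ with Lemma \ref{lem:r_mon} applied in the trace norm to $A_k=T_k^*T_k$ with $r=p/2$, splitting into the regimes $0<r<1$ and $r\geq 1$. The only cosmetic difference is that you re-derive the lower half of the single-operator estimate from scratch (via $S=\Re(S)+i\Im(S)$ and the choices $\theta=0$, $\theta=-\pi/2$), where the paper simply cites \cite[Theorem 2]{OmarKittaneh_LAA_2019}.
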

		
		\begin{proof}
			Let $\bm{\lambda}=(\lambda_1,\ldots,\lambda_d)\in\mathbb{B}_d$ be arbitrary. Using \cite[Theorem 2]{OmarKittaneh_LAA_2019} again, we have that
			\begin{equation*}
				\frac{1}{2}\norm{\lambda_1 T_1+\cdots+\lambda_dT_d}_p\leq \omega_p\left(\lambda_1 T_1+\cdots+\lambda_dT_d\right)
			\end{equation*}
			and therefore,
			\begin{equation*}
				\frac{1}{2}\norm{\mathbf{T}}_{s,h,p}=\frac{1}{2}\sup_{\bm{\lambda}\in\overline{ \mathbb{B}_d}}\norm{ \lambda_1 T_1+\cdots+\lambda_dT_d}_p\leq \sup_{\bm{\lambda}\in \overline{\mathbb{B}_d}}\omega_p\left(\lambda_1 T_1+\cdots+\lambda_dT_d\right)=\omega_{s,p}(\mathbf{T}).
			\end{equation*}
			Now assume that $1\leq p<2$. Note that for each $i\in\{1,\ldots,d\}$, we have that
			\begin{equation*}
				\norm{\mathbf{T}}_{s,h,p}=\sup_{\bm{\lambda}\in\overline{ \mathbb{B}_d}}\norm{ \lambda_1 T_1+\cdots+\lambda_dT_d}_p\geq \norm{T_i}_p,
			\end{equation*}
			and thus,
			\begin{equation}\label{eq:d_sum}
				d\norm{\mathbf{T}}_{s,h,p}^p\geq \sum_{i=1}^d\norm{T_i}_p^p.
			\end{equation}
			Now, since $\frac{1}{2}\leq \frac{p}{2}< 1$, inequality \eqref{lem:r_mon_Inq_1} implies that 
			\begin{align*}
				\norm{P}_p^p&=	\norm{\left(\sum_{i=1}^{d}T_i^*T_i\right)^\frac{p}{2}}_1\leq \norm{ \sum_{i=1}^{d}(T_i^*T_i )^\frac{p}{2}}_1\\
				&\leq  \sum_{i=1}^{d}\|(T_i^*T_i )^\frac{p}{2}\|_1=\sum_{i=1}^{d}\||T_i|^p\|_1\\
				&=\sum_{i=1}^{d}\|T_i\|^p_p.
			\end{align*}
			It now follows from Lemma \ref{lem:p_norm_equiv} and \eqref{eq:d_sum} that
			\begin{equation*}
				d\norm{\mathbf{T}}_{s,h,p}^p\geq \|{\bf T}\|_{s,p}^p,
			\end{equation*}
			i.e.,
			\begin{equation*}
				\frac{1}{\sqrt[p]{d}}\|{\bf T}\|_{s,p}	\leq \|{\bf T}\|_{s,h,p}.
			\end{equation*}
 
				To prove the complementary inequality, assume that $ 2\leq p<\infty$. 
				Now, since $ \frac{p}{2}\geq 1$, inequality \eqref{lem:r_mon_Inq_2} implies that 
				\begin{align*}
					\norm{P}_p^p&=	\norm{\left(\sum_{i=1}^{d}T_i^*T_i\right)^\frac{p}{2}}_1\leq d^{\frac{p}{2}-1}\norm{ \sum_{i=1}^{d}(T_i^*T_i )^\frac{p}{2}}_1\\
					&\leq d^{\frac{p}{2}-1} \sum_{i=1}^{d}\|(T_i^*T_i )^\frac{p}{2}\|_1=d^{\frac{p}{2}-1}\sum_{i=1}^{d}\||T_i|^p\|_1\\
					&=d^{\frac{p}{2}-1}\sum_{i=1}^{d}\|T_i\|^p_p.
				\end{align*}
				From here,
				\begin{equation*}				d\norm{\mathbf{T}}_{s,h,p}^p\geq\frac{1}{d^{\frac{p}{2}-1}} \|{\bf T}\|_{s,p}^p,
				\end{equation*}
				i.e.,
				\begin{equation*}
					\frac{1}{d^{\frac{p}{2}}}\|{\bf T}\|_{s,p}^p	\leq \|{\bf T}\|_{s,h,p}^p.
				\end{equation*}
 This completes the proof.
		\end{proof}

		\begin{corollary}
			Let $1\leq p<\infty$ and $\mathbf{T}=(T_1,\ldots,T_d)\in \mathfrak{C}_p^d(\mathcal{H})$. Then
			\begin{equation*}
				\frac{1}{2\sqrt[p]{d}}\|{\bf T}\|_{s,p}\leq \frac{1}{2}\|{\bf T}\|_{s,h,p}	\leq \omega_{s,p}(\mathbf{T})\leq \|{\bf T}\|_{s,h,p}\leq \|{\bf T}\|_{s,p} 
			\end{equation*}
			for $1\leq p<2$, and 
			\begin{equation*}
				\frac{1}{2\sqrt{d}}\|{\bf T}\|_{s,p}\leq \frac{1}{2}\|{\bf T}\|_{s,h,p}	\leq \omega_{s,p}(\mathbf{T})\leq \|{\bf T}\|_{s,h,p}\leq \|{\bf T}\|_{s,p} 
			\end{equation*}
			for $2\leq p<\infty$.
		\end{corollary}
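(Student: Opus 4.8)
The plan is to obtain this corollary as a direct concatenation of the estimates already furnished by Theorem~\ref{thm:nr_norm_p_ineq} and the theorem immediately preceding the corollary; no new analytic input is required, so the work is purely one of assembling existing inequalities into a single chain, treating the ranges $1\le p<2$ and $2\le p<\infty$ separately because the dimensional constant differs between them.

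First I would extract from the preceding theorem the two-sided comparison $\tfrac12\|\mathbf{T}\|_{s,h,p}\le\omega_{s,p}(\mathbf{T})\le\|\mathbf{T}\|_{s,h,p}$, which already supplies the three central terms of the asserted chain for every $p\in[1,\infty)$. This is the substantive content, and it is assumed available from the previous result.

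Next, for the leftmost term I would invoke the norm-equivalence inequality from that same theorem. For $1\le p<2$ it reads $\tfrac{1}{\sqrt[p]{d}}\|\mathbf{T}\|_{s,p}\le\|\mathbf{T}\|_{s,h,p}$, and multiplying through by $\tfrac12$ yields $\tfrac{1}{2\sqrt[p]{d}}\|\mathbf{T}\|_{s,p}\le\tfrac12\|\mathbf{T}\|_{s,h,p}$. For $2\le p<\infty$ the corresponding bound carries the constant $\tfrac{1}{\sqrt{d}}$, and the same scaling produces $\tfrac{1}{2\sqrt{d}}\|\mathbf{T}\|_{s,p}\le\tfrac12\|\mathbf{T}\|_{s,h,p}$. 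The rightmost term, $\|\mathbf{T}\|_{s,h,p}\le\|\mathbf{T}\|_{s,p}$, is available from Theorem~\ref{thm:nr_norm_p_ineq} (and is also recorded in the preceding theorem). Chaining these inequalities within each range then produces exactly the two displayed chains.

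There is essentially no genuine obstacle here: the argument is a bookkeeping step. The only point demanding attention is to track which dimensional factor, $\sqrt[p]{d}$ or $\sqrt{d}$, applies in each regime, ensuring that the $1\le p<2$ chain uses $\sqrt[p]{d}$ while the $2\le p<\infty$ chain uses $\sqrt{d}$; beyond this correct case split, the result is immediate.
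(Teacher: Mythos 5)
Your proposal is correct and matches the paper's approach exactly: the corollary is stated there without proof precisely because it is the immediate concatenation of the two-sided bound $\tfrac12\|\mathbf{T}\|_{s,h,p}\le\omega_{s,p}(\mathbf{T})\le\|\mathbf{T}\|_{s,h,p}$ with the norm-equivalence inequalities \eqref{eq:norm_s_p_equiv} and \eqref{eq:norm_s_p_equiv_2} from the preceding theorem (together with $\|\mathbf{T}\|_{s,h,p}\le\|\mathbf{T}\|_{s,p}$ from Theorem \ref{thm:nr_norm_p_ineq}). Your case split on the dimensional constant ($\sqrt[p]{d}$ for $1\le p<2$, $\sqrt{d}$ for $2\le p<\infty$) is precisely what the paper intends.
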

		
		By using \cite[Theorem 8]{OmarKittaneh_LAA_2019} and a similar argumentation as in the proof of Theorem \ref{thm:nr_norm_p_ineq}, we have that the following better estimate holds in the case $p=2$.
		
		\begin{corollary}
			Let $\mathbf{T}=(T_1,\ldots,T_d)\in \mathfrak{C}_2^d(\mathcal{H})$. Then
			\begin{equation*}
				\frac{1}{\sqrt{2d}}\|{\bf T}\|_{s,2}\leq \frac{1}{\sqrt{2}}\|{\bf T}\|_{s,h,2}	\leq \omega_{s,2}(\mathbf{T})\leq \|{\bf T}\|_{s,h,2}\leq \|{\bf T}\|_{s,2}.
			\end{equation*}
		\end{corollary}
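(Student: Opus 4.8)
The plan is to reduce the five-term chain to a single genuinely new estimate---the improved lower bound $\tfrac{1}{\sqrt2}\norm{\mathbf{T}}_{s,h,2}\le\omega_{s,2}(\mathbf{T})$---and to obtain the remaining links directly from results already established in the excerpt.

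First I would invoke \cite[Theorem 8]{OmarKittaneh_LAA_2019}, which sharpens \cite[Theorem 2]{OmarKittaneh_LAA_2019} in the Hilbert--Schmidt case to
\begin{equation*}
\frac{1}{\sqrt2}\norm{T}_2\le\omega_2(T)\le\norm{T}_2,\qquad T\in\mathfrak{C}_2(\mathcal{H}).
\end{equation*}
Only the lower bound is needed, and it is transparent: since $\Re(e^{i\theta}T)$ is self-adjoint, a direct trace computation gives $\norm{\Re(e^{i\theta}T)}_2^2=\tfrac12\norm{T}_2^2+\tfrac12\,\Re\bigl(e^{2i\theta}\,\mathrm{tr}(T^2)\bigr)$, whence, after taking the supremum over $\theta$,
\begin{equation*}
\omega_2(T)^2=\tfrac12\bigl(\norm{T}_2^2+|\mathrm{tr}(T^2)|\bigr)\ge\tfrac12\norm{T}_2^2.
\end{equation*}

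Next, following the argument of Theorem \ref{thm:nr_norm_p_ineq} but with this better constant, I would fix $\bm{\lambda}=(\lambda_1,\ldots,\lambda_d)\in\overline{\mathbb{B}_d}$ and apply the lower bound to $T=\lambda_1T_1+\cdots+\lambda_dT_d$, obtaining $\tfrac{1}{\sqrt2}\norm{\lambda_1T_1+\cdots+\lambda_dT_d}_2\le\omega_2(\lambda_1T_1+\cdots+\lambda_dT_d)$. Taking the supremum over $\bm{\lambda}\in\overline{\mathbb{B}_d}$ on both sides and reading off the definitions of $\norm{\cdot}_{s,h,2}$ and $\omega_{s,2}(\cdot)$ then yields
\begin{equation*}
\frac{1}{\sqrt2}\norm{\mathbf{T}}_{s,h,2}\le\omega_{s,2}(\mathbf{T}).
\end{equation*}

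Finally I would assemble the chain. The two rightmost inequalities $\omega_{s,2}(\mathbf{T})\le\norm{\mathbf{T}}_{s,h,2}\le\norm{\mathbf{T}}_{s,2}$ are precisely Theorem \ref{thm:nr_norm_p_ineq} at $p=2$, while the $p=2$ instance of \eqref{eq:norm_s_p_equiv_2} gives $\tfrac{1}{\sqrt d}\norm{\mathbf{T}}_{s,2}\le\norm{\mathbf{T}}_{s,h,2}$; multiplying the latter by $\tfrac{1}{\sqrt2}$ produces the leftmost inequality $\tfrac{1}{\sqrt{2d}}\norm{\mathbf{T}}_{s,2}\le\tfrac{1}{\sqrt2}\norm{\mathbf{T}}_{s,h,2}$. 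The only point requiring (mild) care---and the main obstacle---is the passage from the pointwise scalar estimate to its supremum form; since $\tfrac{1}{\sqrt2}$ is a fixed factor it passes through the supremum, and a pointwise inequality is preserved under taking suprema, so the step goes through exactly as in the proof of Theorem \ref{thm:nr_norm_p_ineq}.
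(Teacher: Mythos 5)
Your proposal is correct and follows essentially the same route as the paper: the paper's proof simply invokes \cite[Theorem 8]{OmarKittaneh_LAA_2019} (the Hilbert--Schmidt bound $\frac{1}{\sqrt{2}}\|T\|_2\leq\omega_2(T)$) together with the supremum argument of Theorem \ref{thm:nr_norm_p_ineq}, and assembles the outer inequalities from that theorem and \eqref{eq:norm_s_p_equiv_2} at $p=2$, exactly as you do. Your inclusion of the trace identity $\omega_2(T)^2=\tfrac12\bigl(\|T\|_2^2+|\mathrm{tr}(T^2)|\bigr)$ is a correct, self-contained justification of the cited bound, but it does not change the approach.
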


		\begin{example}\label{ex:sharp}
			Note that the first inequality in \eqref{eq:norm_s_p_equiv_2} is sharp. Indeed, let $p\geq 2$ be arbitrary and let $\mathbf{T}=(T_1,T_2)$, where $T_1=\begin{bmatrix}
				1&0\\0&0
			\end{bmatrix}$ and $T_2=\begin{bmatrix}
				0&0\\1&0
			\end{bmatrix}$. Then
			\begin{equation*}
				P=\sqrt{T_1^*T_1+T_2^*T_2}=\begin{bmatrix}
					\sqrt{2}&0\\0&0
				\end{bmatrix},
			\end{equation*}
			and thus, 
			\begin{equation*}
				\|{\bf T}\|_{s,p}^p=\mathrm{tr}\,(P^p)=2^\frac{p}{2}.
			\end{equation*}
			On the other hand,
			\begin{align*}
				\|{\bf T}\|_{s,h,p}^p&=\sup_{|\lambda_1|^2+|\lambda_2|^2=1}\norm{\lambda_1 T_1+\lambda_2 T_2}_p^p=\sup_{|\lambda_1|^2+|\lambda_2|^2=1}\norm{\begin{bmatrix}
						\lambda_1&0\\\lambda_2&0
				\end{bmatrix}}_p^p\\
				&=\sup_{|\lambda_1|^2+|\lambda_2|^2=1}\mathrm{tr}\,\left(\begin{bmatrix}
					\sqrt{|\lambda_1|^2+|\lambda_2|^2}&0\\0&0\\
				\end{bmatrix}^p\right)\\
				&=\sup_{|\lambda_1|^2+|\lambda_2|^2=1}\left(|\lambda_1|^2+|\lambda_2|^2\right)^\frac{p}{2}=1.
			\end{align*}
			Therefore,
			\begin{equation*}
				\frac{1}{\sqrt{2}}\|{\bf T}\|_{s,p}=1=\|{\bf T}\|_{s,h,p}.
			\end{equation*}
		\end{example}
		
		\begin{example}
			Let $p\geq 1$ and let  $\mathbf{T}=(T_1,T_2)$, where $T_1=\begin{bmatrix}
				1&0\\0&0
			\end{bmatrix}$ and $T_2=\begin{bmatrix}
				0&0\\0&1
			\end{bmatrix}$. Then
			\begin{equation*}
				P=\sqrt{T_1^*T_1+T_2^*T_2}=I,
			\end{equation*}
			and therefore,
			\begin{equation*}
				\frac{1}{\sqrt[p]{2}}\|{\bf T}\|_{s,p}=\frac{1}{2^\frac{1}{p}}\left[\mathrm{tr}\,(P^p)\right]^\frac{1}{p}=\frac{1}{2^\frac{1}{p}}\cdot2^\frac{1}{p}=1.
			\end{equation*}
			On the other hand,
			\begin{align*}
				\|{\bf T}\|_{s,h,p}&=\sup_{|\lambda_1|^2+|\lambda_2|^2=1}\norm{\lambda_1 T_1+\lambda_2 T_2}_p=\sup_{|\lambda_1|^2+|\lambda_2|^2=1}\norm{\begin{bmatrix}
						\lambda_1&0\\0&\lambda_2
				\end{bmatrix}}_p\\
				&=\sup_{|\lambda_1|^2+|\lambda_2|^2=1}\left(|\lambda_1|^p+|\lambda_2|^p\right)^\frac{1}{p}=1.
			\end{align*}
			This shows that the first inequality in \eqref{eq:norm_s_p_equiv} is sharp and the equality can hold for any $p\geq 1$.
		\end{example}

		\section*{Declarations}
		\noindent{\bf{Funding}}\\
		This work has been supported by the Ministry of Science, Technological Development and Innovation of the Republic of Serbia [Grant Number: 451-03-137/2025-03/200102].	
		
		\vspace{0.5cm}
		
		\noindent{\bf{Availability of data and materials}}\\
		\noindent No data were used to support this study.
		\vspace{0.5cm}\\
		\noindent{\bf{Competing interests}}\\
		\noindent The author declares that he has no competing interests.
		\vspace{0.5cm}
		
		\noindent{\bf{Authors' contributions}}\\
		\noindent
		The work was a collaborative effort of all authors, who contributed equally to writing the article. All authors have read and approved the final manuscript.
		
		\vspace{0.5cm}
		
		
		
	\end{document}